\newcommand{\pI}[1]{\left <{#1}\right >}
\newcommand{\set}[1]{\left\{#1\right\}}
\def\R{\mathbb{R}}
\def\e{\mathbf{e}}
\newcommand\restr[2]{{
		\left.\kern-\nulldelimiterspace 
		#1 
		\vphantom{\big|} 
		\right|_{#2} 
}}
\newtheorem{theorem}{Theorem}[section]
\newtheorem{lemma}[theorem]{Lemma}
\newtheorem{proposition}[theorem]{Proposition}
\newtheorem{definition}{Definition}
\newtheorem{remark}{Remark}
\newtheorem{step}{Step}[section]
\newtheorem{example}{Example}
\title{Bowl Soliton Asymptotics and Applications}
\author{Sathya Rengaswami, Jos\'e Torres Santaella}
\date{}
\begin{document}

\maketitle
\begin{abstract}
    In this paper, we obtain the asymptotic expansion for the analogue of the bowl-soliton for a large `nondegenerate' class of fully nonlinear curvature flows. We use this to show the uniqueness of these bowl-type solitons in their asymptotic class. We also give examples to illustrate the situation for `degenerate' speeds and how they different they can be. Finally, we show how to construct `wing-like' solitons for these flows, which are complete, connected translators that are not graphical, entire or convex. We also obtain asymptotic expansions for them to show the variety of solutions that one can obtain depending on the choice of speed function.
\end{abstract}
\section{Introduction}\label{Intro}\,
Geometric evolution equations for hypersurfaces have seen significant development over the last few decades. We have witnessed a remarkable growth in this field, leading to the emergence of intriguing nonlinear partial differential equations. These equations have played a crucial role in addressing fundamental questions within both mathematics and physics.
\newline

In this research, our focus lies on a particular type of evolving hypersurfaces known as "translators." These hypersurfaces in $\mathbb{R}^{n+1}$ undergo evolution by translation along a fixed unit direction when subjected to an extrinsic curvature flow, i.e. the normal velocity at each point of the hypersurface is a 1-homogeneous smooth symmetric function of their principal curvatures. It's worth noting that translators represent a significant class of second-order elliptic partial differential equations.
\newline

More precisely, given an immersed hypersurface $\Sigma_0=F_0(\Sigma)\subset\mathbb{R}^{n+1}$, a solution to an extrinsic curvature flow, or $f$-flow for short, with initial data $\Sigma_0$ corresponds to a $1$-parameter family of immersions $F:\Sigma\times\mathbb{R}\to\mathbb{R}^{n+1}$, that solves
\begin{align}\label{f-flow}
\begin{cases}
\dfrac{\partial F}{\partial t}(x,t)=f(\lambda(x,t))\nu(x,t),\:(x,t)\mbox{ in }\Sigma\times(0,T),
\\
F(x,0)=F_0(x),
\end{cases}    
\end{align}
where $\nu(x,t)$ is the inward unit normal vector of the hypersurface $\Sigma_t=F(\Sigma,t)$ in $\mathbb{R}^{n+1}$, $\lambda(x,t)=(\lambda_1(x,t),\ldots,\lambda_n(x,t))$ are the principal curvatures of $\Sigma_t$ with respect to to $\nu(x,t)$. Here, $f:\Gamma\to \mathbb{R}$ is a smooth function of the principal curvatures of $\Sigma_t$ with the following properties: 
\begin{enumerate}[label=\alph*)]
\item\label{a} $\Gamma\subset\mathbb{R}^{n}$ is an open symmetric cone that contains the positive cone $\Gamma_+ \coloneqq \left\{\lambda\in\mathbb{R}^n: \lambda_i>0\right\}$.
\item\label{b} $f$ is positive and symmetric, i.e.: $f(\sigma(\lambda_1),\ldots,\sigma(\lambda_n))=f(\lambda_1,\ldots,\lambda_n)$ for every permutation $\sigma\in S_n$.
\item\label{c} $f$ is strictly increasing in each variable, i.e.: $\dfrac{\partial f}{\partial \lambda_i}(\lambda)>0$ holds for every $\lambda\in\Gamma$ and $i=1,\ldots,n$. 
			\item\label{d} $f$ is $1$-homogeneous, i.e.: $ f(c\lambda)=cf(\lambda)$ for every $c>0$. 
 \item\label{e} $f$ vanishes at boundary of $\Gamma$, i.e.: there exist a continous function $\tilde{f}:\overline{\Gamma}\to\mathbb{R}$ such that $\restr{\tilde{f}}{\Gamma}=f$ and $\restr{\tilde{f}}{\partial \Gamma}=0$.  
\end{enumerate}

Given a speed function $f$, a translating solution to Equation \eqref{f-flow}, or a \emph{f-translator} for short, is a solution of the form 
\begin{align*}
 F(x,t)=F_0(x)+e_{n+1}t.
\end{align*}
(up to tangential reparametrizations.) Note that by hypotheses \eqref{b} and \eqref{c}, the $f$-flow is invariant under the isometries of ambient space and parabolic rescalings. Thus there is no loss of generality in fixing the translation direction  to be $e_{n+1}=(0,\ldots,0,1)\in\mathbb{R}^{n+1}$. 
\newline

Importantly, translating solutions can be studied by the parabolic and by the elliptic PDE points of view, since every time slice $\Sigma_t$ satisfies the Equation 
\begin{align}\label{f-trans}
    f(\lambda)=-\pI{\nu,e_{n+1}},
\end{align}
recall that $\nu$ is the inward pointing unit normal of $\Sigma_t$ in $\mathbb{R}^{n+1}$. In fact, in local coordinates $\Sigma_t$ can be seen as a graph of a function for which Equation \eqref{f-trans} correspond to a nonlinear elliptic PDE (quasilinear when $f=H$ and fully nonlinear when $f\neq H$.). Moreover, from the parabolic point of view, $f$-translators without boundary are examples of noncompact \emph{eternal solutions} of Equation \eqref{f-flow}, i.e.: solutions that are defined for all $t\in(-\infty,\infty)$, see \cite{jtsmaximumprinciple} for details.
\newline

It is worth mentioning that $f$-translators have been widely studied when $f=H$, see for instance \cite{hoffman2021notes} for a complete survey about $H$-translators, model of singularities, and minimal surfaces theory. In addition, the reader will be referred to \cite{urbas} for existence and properties of $\sqrt[\alpha]{S_n}$-translators for $\alpha>0$.
\newline

In a different work, discussed in \cite{rengaswami2021rotationally}, the first author explored the existence, uniqueness, regularity, and asymptotic geometry of "bowl"-type solutions. These solutions are constructed for fairly general speed functions that are $\alpha$-homogeneous with $\alpha>0$. To be precise, a ``bowl''-type solution of \eqref{f-flow} is a complete, strictly convex, rotationally symmetric\footnote{This solution is unique among strictly convex rotationally symmetric translating graphs.} translating graph in $\mathbb{R}^{n+1}$ which may be defined in a ball of finite radius or all of $\mathbb{R}^n$.
\newline

This dichotomy is characterized by the value of $f(0,1,\ldots,1)$ and the asymptotic behavior of the implicit solution of $f(x,y,\ldots,y)=1$ as $y\to \infty$, i.e. the behavior of the speed function near the boundary of the positive cone. In addition, when $f(0,1,\ldots,1)>0$, the ``bowl''-type solution is always entire and behaves like a paraboloid at infinity, i.e:
\begin{align*}
\dfrac{|x|^2}{2f(0,1,\ldots,1)}+o(|x|^2),\mbox{ as }|x|\to\infty.  
\end{align*}
It is an interesting question what the lower order terms are. When $f$  is the mean curvature, the authors in \cite{CSS}  showed that the bowl soliton is smoothly asymptotic to 
\begin{equation}
    \frac{|x|^2}{2(n-1)}-\ln(|x|)+O(|x|^{-1}), \mbox{ as }|x|\to\infty.
\end{equation}
We extend the above result to a large class of speeds which we define below. For convenience, we denote $\e \coloneqq (1,\ldots,1)\in \R^{n-1}$, the $(n-1)-$tuple of 1's.

\begin{theorem}\label{T1}
 Assume that $f:\Gamma\to\mathbb[0,\infty)$ satisfies properties \ref{a}-\ref{d} and is nondegenerate. Then, the entire ``bowl''-type solution is smoothly asymptotics to 
    \begin{equation*}
        \dfrac{|x|^2}{2f(0,\e)}-\restr{\dfrac{\partial f}{\partial\lambda_1}}{(0,\e)} \ln (|x|) +O(|x|^{-1}),\mbox{ as }|x|\to\infty.
    \end{equation*}
\end{theorem}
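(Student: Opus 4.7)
The plan is to use rotational symmetry to reduce the translator equation \eqref{f-trans} to a single ODE for the profile $u(r)$, $r=|x|$, and then match an ansatz of the stated form order by order using the $1$-homogeneity of $f$ together with a Taylor expansion of $f(\cdot,\e)$ around $0$ that is available precisely because $f$ is nondegenerate.

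For a rotationally symmetric graph $x_{n+1}=u(r)$, the principal curvatures are $u''/(1+u'^2)^{3/2}$ (with multiplicity $1$) and $u'/(r\sqrt{1+u'^2})$ (with multiplicity $n-1$), while $-\pI{\nu,e_{n+1}}=1/\sqrt{1+u'^2}$. Using property \ref{d} to factor out $1/\sqrt{1+u'^2}$, Equation \eqref{f-trans} reduces to
\begin{equation*}
f\!\left(\frac{u''}{1+u'^2},\,\frac{u'}{r}\,\e\right)=1,
\end{equation*}
and a second use of homogeneity in the form $f(A,B\,\e)=B\,f(A/B,\e)$ collapses everything to the one-variable function $g(s):=f(s,\e)$.

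Setting $a:=f(0,\e)$ and $b:=\restr{\partial f/\partial\lambda_1}{(0,\e)}$, I will plug in the ansatz
\begin{equation*}
u(r)=\frac{r^2}{2a}-b\ln r+\phi(r),\qquad \phi(r)=O(r^{-1}),
\end{equation*}
and compute $u'/r = 1/a - b/r^2 + \phi'/r$ together with $s:=(u''/(1+u'^2))/(u'/r)=a^2/r^2 + O(r^{-4})$. Nondegeneracy of $f$ guarantees that $g$ is smooth near $s=0$, so $g(s)=a+bs+O(s^2)$; multiplying by $u'/r$, the key cancellation appears:
\begin{equation*}
\frac{u'}{r}\,g(s) = \left(\frac{1}{a}-\frac{b}{r^2}+\frac{\phi'}{r}\right)\!\left(a+\frac{a^2 b}{r^2}+O(r^{-4})\right) = 1+\frac{a\,\phi'}{r}+O(r^{-4}),
\end{equation*}
the $\pm ab/r^2$ cross-terms cancelling exactly. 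Equating with the right-hand side $1$ leaves the residual ODE $a\phi'/r = O(r^{-4})$, which yields $\phi'=O(r^{-3})$ and hence $\phi = O(r^{-2})\subset O(r^{-1})$, confirming the announced expansion.

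To turn this formal matching into a rigorous, \emph{smooth} asymptotic, I will combine the leading-order paraboloidal behaviour of $u$ proved in \cite{rengaswami2021rotationally} with a barrier argument: the functions $r^2/(2a)-b\ln r\pm C/r$ will be verified to be super/sub-solutions of the above ODE for $r\ge R_0$ with $C$ large enough, and the ODE comparison principle then squeezes $u$. Smoothness of the remainder follows by bootstrapping, differentiating the ODE to obtain linear equations for $\phi^{(k)}$ with coefficients decaying at infinity. The main obstacle I anticipate is the quantitative control of the Taylor remainder $O(s^2)$ uniformly in the relevant range, and this is precisely where the nondegeneracy hypothesis earns its keep: it ensures that $g$ is $C^k$ near $0$, a point on $\partial\Gamma$ at which a generic $1$-homogeneous speed could a priori degenerate.
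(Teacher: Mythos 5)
Your formal matching is correct and identifies the right mechanism: writing $f\bigl(u''/(1+u'^2),(u'/r)\e\bigr)=(u'/r)\,f(s,\e)$ with $s=a^2/r^2+O(r^{-4})$ and watching the $\pm ab/r^2$ terms cancel is exactly the cancellation that produces the $-b\ln r$ term. The paper performs the same computation, but organized differently: it reduces to the \emph{first-order} ODE $v'=(1+v^2)g(v/r)$ for the slope $v=u'$, where $g$ is the implicit solution of $f(g(y),y)=1$ (not your $g(s)=f(s,\e)$), and proves $v(r)=r/a - b/r + o(r^{-2})$ before integrating. Note that you do need the $o(r^{-2})$ (equivalently your $\phi=O(r^{-2})$) rather than $o(r^{-1})$ at the level of the slope, since integrating $o(r^{-1})$ only yields $o(\ln r)$; your formal computation does land at the right order.

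The genuine gap is in the sentence ``the ODE comparison principle then squeezes $u$.'' A comparison argument needs an initial ordering: for your second-order barriers $r^2/(2a)-b\ln r\pm C/r$ you would need to know, at some finite $R_0$, that $u(R_0)$ already agrees with the barrier to within $O(R_0^{-1})$ -- but the only a priori information available is the leading-order paraboloidal behaviour $u=r^2/(2a)+o(r^2)$, and the solution carries an undetermined additive constant, so this ordering is essentially what you are trying to prove. (A second-order comparison also requires control at both ends of the interval, or ellipticity-based arguments you have not set up.) The paper's resolution is the real content of Section 3 and is absent from your plan: it runs a four-stage bootstrap on the first-order equation for $v$ (successively $v=r+o(r)$, $r+o(1)$, $r-c/r+o(r^{-1})$, $r-c/r+o(r^{-2})$ in the normalization $a=1$, $c=b$), and at each stage the required one-point ordering $v(r_1)\lessgtr w_{\pm,\varepsilon}(r_1)$ is obtained by contradiction: if $v$ stayed on the wrong side of the barrier for all large $r$, then monotonicity of $g$ forces $v'$ to be too large or too small (e.g.\ $v'\leq 1-\varepsilon/(cr)+O(r^{-2})$), and integrating contradicts the assumed lower or upper bound on $v$. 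Each such verification also consumes the estimate from the previous stage (for instance $v\geq r-1$ is needed to bound $1+v^2$ from below), which is why the bootstrap cannot be collapsed into a single barrier step. To repair your proof you should either reproduce this growth-rate contradiction at each order, or find another way to initialize the comparison; as written, the argument does not close.
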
\,
Furthermore, by applying the same techniques employed in \cite{MHS}, we show that that the bowl-type soliton is essentially unique in the asymptotic class of $O(|x|^2)$ solutions via the following theorem:

\begin{theorem}\label{T3}
Let $\Sigma\subset\R^{n+1}$ be a strictly convex complete $f$-translator with a single end smoothly asymptotic to the ``bowl''-type solution. Then, if $f$ satisfies properties \ref{a}-\ref{e} and is nondegenerate, $\Sigma$ is the ``bowl''-type solution up to vertical translations.
\end{theorem}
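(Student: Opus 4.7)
The plan is to follow the approach of \cite{MHS}, reducing the geometric uniqueness problem to a scalar maximum principle for the difference of height functions. Since $\Sigma$ is strictly convex, complete, and has a single end smoothly asymptotic to the bowl-type soliton $\Sigma_0$, one first shows that $\Sigma$ is an entire graph: strict convexity, completeness, and the $O(|x|^2)$ asymptotic regime imposed by $\Sigma_0$ force $\Sigma=\graph(u)$ for some $u:\R^n\to\R$, while the bowl is already given as $\Sigma_0=\graph(v)$. After a single vertical translation of $\Sigma$, the smooth asymptotic condition combined with Theorem \ref{T1} gives
\begin{equation*}
    D^{k}(u-v)(x)=o(1) \quad\text{as } |x|\to\infty, \quad \text{for every } k\geq 0.
\end{equation*}

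In graph form, the $f$-translator equation becomes a fully nonlinear elliptic PDE $G(D^{2}u,Du)=0$, which is independent of $u$ (reflecting vertical translation invariance of the flow) and elliptic on the set of positive definite Hessians by condition \ref{c}. Writing $w:=u-v$, subtracting the two equations, and applying the fundamental theorem of calculus in the $(D^{2},D)$-variables along the segment from $(D^{2}v,Dv)$ to $(D^{2}u,Du)$ produces a linear elliptic equation
\begin{equation*}
    a^{ij}(x)\,\partial_{ij}w+b^{i}(x)\,\partial_{i}w=0 \quad\text{on } \R^{n},
\end{equation*}
with no zeroth-order term and with $a^{ij}$ pointwise positive definite by nondegeneracy of $f$. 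Since $w\to 0$ at infinity, if $w\not\equiv 0$ then $w$ attains a nonzero interior extremum; the strong maximum principle then forces $w$ to be constant, contradicting $w\to 0$. Hence $w\equiv 0$, i.e.\ $\Sigma$ agrees with $\Sigma_0$ after the chosen vertical translation.

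The main obstacle will be controlling the ellipticity of $a^{ij}\partial_{ij}$ as $|x|\to\infty$. The principal curvatures of the bowl decay at different rates in the axial and tangential directions, so the eigenvalues of $a^{ij}$ may degenerate or blow up at infinity, and the strong maximum principle need not apply directly on an unbounded domain with such coefficients. To handle this, I would combine the nondegeneracy of $f$ with the sharp expansion of Theorem \ref{T1} to obtain quantitative ellipticity bounds as a function of $|x|$, then conclude via a Phragm\'en--Lindel\"of / Alexandrov--Bakelman--Pucci comparison on exhausting balls, using a barrier that decays sufficiently fast at infinity. A secondary technicality is justifying that the path of Hessians $tD^{2}u+(1-t)D^{2}v$ stays inside the set of strictly convex matrices, which follows from strict convexity of $\Sigma$ and $\Sigma_0$ together with the smooth asymptotic convergence of $u$ to $v$.
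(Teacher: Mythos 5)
Your proposal is correct in outline, but it is not the paper's argument: despite your opening reference to \cite{MHS}, what you describe is a linearization-plus-maximum-principle proof, whereas the paper (and \cite{MHS}) runs Alexandrov's moving plane method. The paper reflects $\Sigma$ across vertical hyperplanes $\Pi_t=\set{x_1=t}$, uses the sharp asymptotics of Theorem \ref{T1} to show the reflected piece is a graph and lies on the correct side for $t$ large (Lemmas \ref{lem1}--\ref{lem2}), propagates via the tangency principle (Theorem \ref{T2}) and an open-closed argument down to $t=0$, concludes reflection symmetry across every vertical hyperplane through the axis and hence rotational symmetry, and then invokes uniqueness of the rotationally symmetric strictly convex translating graph. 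Property \ref{e} enters precisely to compare $\Sigma$ with hyperplanes in Lemma \ref{lem1}. Your route instead writes both hypersurfaces as entire graphs $u,v$, linearizes $G(D^2\cdot,D\cdot)=0$ along the segment joining them to get $a^{ij}\partial_{ij}w+b^i\partial_i w=0$ for $w=u-v$ with no zeroth-order term, and kills $w$ by the strong maximum principle. This is sound: the set of positive definite Hessians is convex, so the interpolated Hessians stay admissible and $a^{ij}$ is positive definite pointwise; and since $w=O(|x|^{-1})\to 0$, a nonzero extremum of $w$ is attained at an \emph{interior} point, where only the \emph{local} strong maximum principle (local uniform ellipticity and local boundedness of coefficients, both automatic on compact sets) is needed. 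For this reason your announced ``main obstacle'' is a non-issue --- no Phragm\'en--Lindel\"of or ABP argument on exhausting balls is required once $w$ genuinely decays; that machinery would only be needed if $w$ were merely bounded. The two steps you should actually spell out are (i) that completeness, strict convexity and the graphical paraboloidal end force $\Sigma$ to be an \emph{entire} vertical graph (a short convex-body argument: the convex region bounded by $\Sigma$ contains a vertical ray over every point of $\R^n$), and (ii) the uniform decay $D^k(u-v)\to 0$, of which only $k=0$ together with $C^2$ regularity is really used. A trade-off worth noting: your argument does not use property \ref{e} at all, so it is in that respect more elementary and slightly more general, while the paper's moving-plane proof yields rotational symmetry as an intermediate conclusion and is more robust to weaker asymptotic information.
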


To show that Theorem \ref{T1} does not apply to all speed functions, we discuss the degenerate speed function $f=\sqrt[n]{S_n}$ (the $n^{th}$ root of the Gauss curvature) and show that the bowl soliton does not have quadratic asymptotics for any $n\geq 2$. 
\begin{theorem}
 Let $r \coloneqq |x|$ be the Euclidean norm of an $n$-tuple. The ``bowl''-type solution for the speed function $f=\sqrt[n]{S_n}$ is smoothly asymptotic to 
\begin{align*}
    \begin{cases}
        \int\limits_0^{r}e^{\frac{s^2}{2}}ds+O\left(\int\limits_0^{r}\sqrt{e^\frac{s^2}{2}-1} ds\right),& \mbox{ for }n=2,
        \\
        \dfrac{r^4}{12}+O(r),& \mbox{ for }n=3,
        \\
        \dfrac{(n-2)^{\frac{n-1}{n-2}}}{2(n-1)^{\frac{1}{n-2}}}r^{\frac{2(n-1)}{n-2}}+O\left(r^{\frac{2}{n-2}}\right),&\mbox{ for }n\geq 4
    \end{cases},\mbox{ as }r\to\infty. 
\end{align*}
\end{theorem}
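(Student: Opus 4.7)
The plan is to exploit rotational symmetry to reduce the translator equation for $f = \sqrt[n]{S_n}$ to a first-order separable ODE, solve it, and extract asymptotics case-by-case.

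Since the bowl-type solution is rotationally symmetric and convex, write it as the graph of a function $u(r)$ with $r = |x|$ and $u'(0) = 0$. The $n-1$ azimuthal principal curvatures are $u'/\bigl(r\sqrt{1+(u')^2}\bigr)$ and the single axial principal curvature is $u''/(1+(u')^2)^{3/2}$. Substituting into the translator equation $\sqrt[n]{S_n}=1/\sqrt{1+(u')^2}$, raising to the $n$-th power, and simplifying yields the ODE
\begin{equation*}
(u')^{n-1} u'' = r^{n-1}\bigl(1+(u')^2\bigr),
\end{equation*}
which with $v = u'$ separates as $\tfrac{v^{n-1}}{1+v^2}\,dv = r^{n-1}\,dr$. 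The condition $v(0) = 0$ fixes the integration constant.

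I would then handle each case by integrating the left side directly. For $n=2$, one obtains $1 + v^2 = e^{r^2}$, so $u'(r) = \sqrt{e^{r^2}-1}$; integrating and writing $\sqrt{e^{s^2}-1} = e^{s^2/2}\sqrt{1-e^{-s^2}}$ isolates the leading term $\int_0^r e^{s^2/2}\,ds$ and controls the remainder by the stated integral. For $n=3$, the antiderivative evaluates to $v - \arctan v = r^3/3$, so $v = r^3/3 + O(1)$ and $u = r^4/12 + O(r)$. For $n \geq 4$, the identity $v^{n-1}/(1+v^2) = v^{n-3} - v^{n-3}/(1+v^2)$ applied recursively shows that the dominant antiderivative is $v^{n-2}/(n-2)$, so that $v^{n-2} = \tfrac{(n-2)r^n}{n} + (\text{lower order})$. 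Taking the $(n-2)$-th root via a binomial expansion and integrating once more delivers the stated polynomial leading term, with the error controlled by the subleading contributions in the expansion.

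To upgrade the pointwise asymptotics to smooth ones, I would differentiate the translator ODE repeatedly: once the leading-order behavior of $v$ is known, $u''$ is determined algebraically by the ODE, and higher derivatives follow by further differentiation, induction, and the already-established lower-order estimates. The main obstacle is the $n=2$ case, which is the most degenerate: here $f(0,\e)=0$ and the solution grows super-polynomially, so the quadratic-paraboloid asymptotics of Theorem \ref{T1} fail dramatically. Both the exact height function and the natural comparison function are transcendental and non-elementary, so the error estimate must be presented in the unconventional form involving another such integral rather than a polynomial bound.
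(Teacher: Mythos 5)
Your reduction to the separable ODE $v^{n-1}v'=(1+v^2)r^{n-1}$ and the case split by $n$ is exactly the paper's strategy, and your $n=2$ and $n=3$ computations coincide with the paper's (implicit relation $v-\arctan v=r^3/3$ for $n=3$, explicit integration for $n=2$). In fact your $n=2$ integration is the correct one: $\tfrac12\ln(1+v^2)=\tfrac{r^2}{2}$ gives $1+v^2=e^{r^2}$, hence $v=\sqrt{e^{r^2}-1}=e^{r^2/2}+O(e^{-r^2/2})$, which is what makes the stated leading term $\int_0^r e^{s^2/2}\,ds$ come out; the paper's Proposition 5.2 records the solution as $\sqrt{e^{r^2/2}-1}$, which does not satisfy the ODE and appears to be a slip. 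The only genuine methodological divergence is the case $n\geq 4$: the paper runs a multi-step sub/super-solution bootstrap (first $v\geq\bigl(\tfrac{n-2}{n}\bigr)^{1/(n-2)}r^{n/(n-2)}$, then the $o(r^{n/(n-2)})$ refinement, then two-sided bounds on the correction $\varphi$), whereas you invert the exact antiderivative identity $\tfrac{v^{n-2}}{n-2}-\int_0^v\tfrac{t^{n-3}}{1+t^2}\,dt=\tfrac{r^n}{n}$ directly. Your route is more elementary and arguably cleaner, but to make it airtight you should note explicitly that $\int_0^v t^{n-3}(1+t^2)^{-1}\,dt=O(v^{n-4})$ for $n\geq5$ and $O(\ln v)$ for $n=4$, i.e.\ it is $o(v^{n-2})$, so that $v^{n-2}=\tfrac{n-2}{n}r^n(1+o(1))$ holds before you apply the binomial expansion to take the $(n-2)$-th root; with that quantified, the expansion gives the error $O(r^{-n/(n-2)})$ in $v$ (up to a logarithm when $n=4$, which is harmless after the final integration) and hence $O(r^{2/(n-2)})$ in $u$, matching the statement. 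Your closing remarks on smoothness of the asymptotics (differentiating the ODE) and on why $n=2$ forces a non-elementary error term are consistent with the paper.
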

 
Then, we discuss a special kind of translator known as the \emph{wing-like} solution firstly studied in the context of mean curvature flows in \cite{CSS}.
\begin{theorem}
For every $R>0$, there exist a non-convex complete rotationally symmetric $f$-translator $W_R$ with respect to $x_{n+1}$-axis $f=\sqrt[k]{S_k}$ and $f=\dfrac{S_k}{S_{k-1}}$ such that:
\begin{enumerate}
    \item For $f=\sqrt[k]{S_k}$, we distinguish:
    \begin{enumerate}
        \item When $k$ is even: $W_R\setminus B_{R_1}(0)$ with $R_1>R$ posses two graphical branches $W_R^+,W_R^-:\mathbb{R}^n\setminus B_R(0)\to\mathbb{R}$ smoothly asymptotic to 
\begin{align*}
    W_R^\pm(x)=\pm\left(\dfrac{|x|^2}{2f(0,\e)}-\restr{\dfrac{\partial f}{\partial \lambda_1}}{(0,\e)}\ln(|x|)+O(|x|^{-1})+C^{\pm}\right),\mbox{ as } |x|\to\infty. 
\end{align*}    
      \item When $k$ is odd: $W_R$ posse a $\mathbb{S}^{n-1}$ boundary component and $W_R\setminus B_{R_1}(0)$ with $R_1>R$ is given by a vertical graph smoothly asymptotic to 
\begin{align*}
    W_R(x)=\dfrac{|x|^2}{2f(0,\e)}-\restr{\dfrac{\partial f}{\partial \lambda_1}}{(0,\e)}\ln(|x|)+O(|x|^{-1})+C,\mbox{ as } |x|\to\infty. 
\end{align*}    
    \end{enumerate}
    \item For $f=\dfrac{S_k}{S_{k-1}}$ we have that $W_R\setminus B_{R_1}(0)$ with $R_1>R$ posses two graphical branches $W_R^+,W_R^-$ such that
    \begin{align*}
    &W_R^+(x)=\dfrac{|x|^2}{2f(0,\e)}-\restr{\dfrac{\partial f}{\partial \lambda_1}}{(0,\e)}\ln(|x|)+O(|x|^{-1})+C^{+},\mbox{ as } |x|\to\infty, 
    \\
    &\lim\limits_{|x|\to\infty}|\nabla W_R^{-}(x)|=0.
    \end{align*}
\end{enumerate}
\end{theorem}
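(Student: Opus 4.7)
The plan is to construct $W_R$ via a rotationally symmetric ODE ansatz, establish short- and long-time existence for the resulting profile curve, and then match the graphical branches with Theorem~\ref{T1} to extract the claimed asymptotics. Parametrize the profile curve in the $(r,z)$-halfplane by arclength as $\gamma(s) = (r(s), z(s))$, with tangent angle $\theta$ given by $r'(s) = \cos\theta(s)$, $z'(s) = \sin\theta(s)$. The principal curvatures split as $\theta'(s)$ (curvature of $\gamma$) and the $(n-1)$-fold repeated value $\sin\theta(s)/r(s)$ (from the rotation), so the translator equation \eqref{f-trans} becomes the scalar ODE
\begin{equation*}
    f\!\left(\theta'(s),\,\tfrac{\sin\theta(s)}{r(s)}\e\right) = \pm\cos\theta(s),
\end{equation*}
to be solved with the wing initial data $r(0)=R$, $z(0)=0$, $\theta(0)=\pi/2$.

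The first substantial step is to resolve the degeneracy of this ODE at the waist. Since $\cos\theta(0)=0$, hypothesis \ref{e} forces the curvature tuple $(\theta'(0),\e/R)$ to lie on $\partial\Gamma$; for $f=\sqrt[k]{S_k}$ a direct computation with elementary symmetric polynomials yields $\theta'(0) = -(n-k)/(kR)$, and an analogous algebraic identity uniquely determines $\theta'(0)$ for $f=S_k/S_{k-1}$. I would then desingularize the ODE by reparametrizing with $\theta$ as the independent variable on $(\pi/2-\epsilon,\pi/2)$ (and, when the sign of $f$ permits, on $(\pi/2,\pi/2+\epsilon)$), recasting the problem as a genuinely non-degenerate first-order equation for $r(\theta)$ to which Picard iteration applies. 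This mirrors the vertex desingularization carried out in the original bowl construction of \cite{rengaswami2021rotationally} and produces an upper branch with $\theta$ decreasing from $\pi/2$; whether a second branch exists depends on the reflection behavior of $f$.

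Long-time existence of the upper branch then follows from a barrier argument using the entire bowl-type solution of \cite{rengaswami2021rotationally}: any sufficiently high vertical translate of that bowl sits above the wing profile, and the ODE maximum principle prevents $\theta$ from leaving $(0,\pi/2)$ or $r$ from blowing up in finite $s$. Hence $r,z\to\infty$ and $\sin\theta/r\to 0$, so the principal curvatures eventually enter $\Gamma_+$, the branch becomes a strictly convex graph, and Theorem~\ref{T1} upgrades its convergence to the bowl into the smooth expansion stated, with $C^\pm$ recording the height offset relative to the reference bowl. For $f=\sqrt[k]{S_k}$ with $k$ even, $f$ is invariant under $\lambda\mapsto -\lambda$, so reflection across $\{z=0\}$ of the upper branch produces $W_R^-$ with the mirror asymptotics; for $k$ odd, $f$ is odd under this reflection, no downward translator branch exists, and $W_R$ terminates at the $\mathbb{S}^{n-1}$ waist as a genuine boundary component.

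For $f=S_k/S_{k-1}$ the upper branch is constructed identically, but the lower branch behaves differently because $S_k/S_{k-1}$ vanishes only to first order on $\partial\Gamma_+$: the ODE drives the descending tangent angle back toward $\theta=0$ rather than continuing past the horizontal, so a monotonicity analysis of $\theta$ together with barriers forcing $r(s)\to\infty$ yields $\tan\theta(s)\to 0$ and hence $|\nabla W_R^-|\to 0$. The principal obstacle throughout is precisely the first step: at the waist the ODE lies on $\partial\Gamma$, where $f$ is only continuous, so standard existence in the $s$-variable fails and one must build the solution by hand through the $\theta$-reparametrization. It is exactly this reparametrization, together with the algebraic identification of $\theta'(0)$ from the boundary equation for $f$, that both produces the wing and distinguishes the four cases of the theorem.
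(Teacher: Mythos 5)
Your overall strategy (desingularize the profile ODE at the vertical-tangent waist, extend the two branches, feed the upper branch into the asymptotic analysis of Theorem \ref{T1}, and use the evenness of $S_k$ under $\lambda\mapsto-\lambda$ to get the mirror lower branch when $k$ is even) matches the paper's construction in outline; the paper desingularizes by writing the surface as a graph $r(x_{n+1})$ over the axis and solving $r''=-(1+(r')^2)g(r^{-1},r')$ with $r(0)=R$, $r'(0)=0$, which is equivalent to your $\theta$-reparametrization. A minor quibble: the real regularity issue at the waist is not that ``existence in the $s$-variable fails'' --- once $f$ is inverted to $g(y,z)$ via monotonicity, the arclength (or graph-over-axis) system is already non-degenerate there; the singularity only afflicts the graphical representation $v'=(1+v^2)g(v/r)$ because $v=u'\to\infty$.

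The genuine error is in case 1(b), $k$ odd. You assert that ``no downward translator branch exists'' and that $W_R$ terminates at the $\mathbb{S}^{n-1}$ \emph{waist}. Both claims are false, and the inference from ``$f$ is odd under reflection'' is a non sequitur: lack of reflection symmetry only means the lower branch is not the mirror of the upper one, not that it is absent. The graph-over-axis ODE for the small piece is smooth in $(r,r')$ for every $k$, so the profile curve passes smoothly \emph{through} the waist into a lower branch with $v=u'\to-\infty$ as $r\to R^+$. For $k$ odd and $v<0$ the right-hand side of the ODE \eqref{S_k-trans R>0} is strictly positive, so $v$ increases toward $0$ and the equation becomes singular precisely when $v=0$: at a point with horizontal tangent and $r>0$ one has $\lambda_2=\dots=\lambda_n=0$, hence $\sqrt[k]{S_k}(\lambda)=0$ while $-\langle\nu,e_{n+1}\rangle=1$, so the translator equation cannot be continued. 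The boundary sphere therefore sits at the finite radius $r_1>R$ where the lower branch flattens out --- not at the waist $r=R$ --- and $W_R$ for $k$ odd consists of the entire upper branch, the neck, and a bounded annular lower piece whose edge is that $\mathbb{S}^{n-1}$. As written, your argument misdescribes the object whose existence the theorem asserts in this case.
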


\begin{figure}
    \centering
    \begin{subfigure}[b]{0.32\textwidth}
        \includegraphics[scale=0.5]{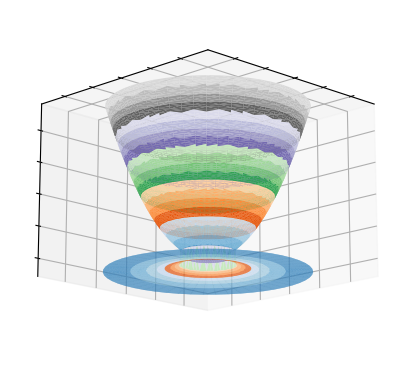}
        \caption{$S_k/S_{k-1}$}
        \label{fig:gull}
    \end{subfigure}
    \begin{subfigure}[b]{0.32\textwidth}
        \includegraphics[scale=0.5]{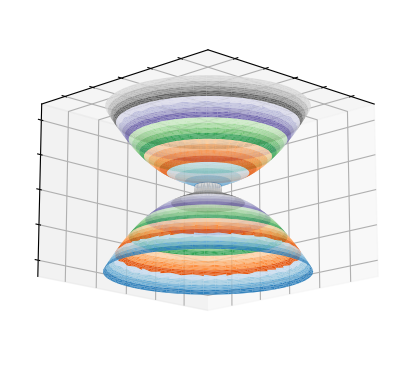}
        \caption{$\sqrt[k]{S_k}$ for $k$ even}
        \label{fig:tiger}
    \end{subfigure}   
    \begin{subfigure}[b]{0.32\textwidth}
        \includegraphics[scale=0.5]{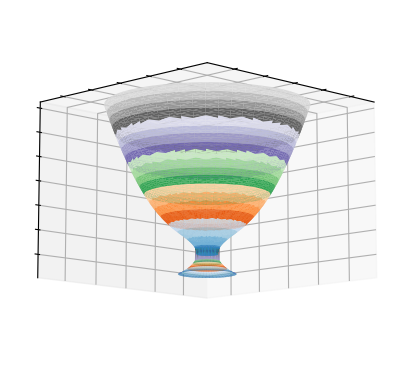}
        \caption{$\sqrt[k]{S_k}$ for $k$ odd}
        \label{fig:mouse}
    \end{subfigure}
    \caption{Winglike translators for various speed functions}
    \label{fig:constrainteqn}
\end{figure}

Finally, we provide some applications of the above theorems, namely an asymptotic growth estimate:
\begin{theorem}\label{T4}
    Let $\Sigma=\set{(x,u(x)):x\in\mathbb{R}^n}$ be an entire convex translating solution of \eqref{f-flow} such that $f$ is nondegenerate. Assume furhter, that there exist  $a,b,C_1,C_2,R>0$ such that 
    \begin{align}
    C_1|x|^a\leq u(x)\leq C_2|x|^b,\mbox{ for }|x|\geq R,
    \end{align}
    then, $a\leq 2\leq b$. In addition, if $a=b=2$, then $u(x)$ agrees with the ``bowl''-type solution up to vertical translations. 
\end{theorem}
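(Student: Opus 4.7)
The plan is to compare $u$ against vertical translates $v+t$ of the entire bowl-type solution $v$ from Theorem~\ref{T1} and apply the strong maximum principle for the fully nonlinear elliptic translator equation~\eqref{f-trans}, whose linearization at the asymptotic profile $(0,\e)$ is uniformly elliptic by nondegeneracy of $f$.

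For $b\geq 2$: suppose $b<2$. Then $u=o(|x|^2)$ while $v(x)\sim|x|^2/(2f(0,\e))$, so $v-u\to+\infty$. Consequently $t_-:=\sup_{\R^n}(u-v)$ is finite and attained at some bounded $x_0$, giving $v+t_-\geq u$ with interior equality at $x_0$. Since both $u$ and $v+t_-$ satisfy~\eqref{f-trans}, the strong maximum principle forces $u\equiv v+t_-$, contradicting the subquadratic growth. The inequality $a\leq 2$ is symmetric: if $a>2$, then $u-v\to+\infty$, so $t^+:=\inf(u-v)$ is finite and attained at a bounded $x_0$, and $v+t^+\leq u$ with interior equality gives $u\equiv v+t^+$, contradicting superquadratic growth.

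For the rigidity when $a=b=2$: the same sliding applied with the sharp leading coefficient of $v$ rules out $C_1>1/(2f(0,\e))$ and $C_2<1/(2f(0,\e))$ by deriving $u\equiv v+c$ and observing the leading-coefficient mismatch, so $C_1\leq 1/(2f(0,\e))\leq C_2$. For the remaining regime we use a blow-down: define $u_\lambda(x):=u(\lambda x)/\lambda^2$. Convexity and the quadratic bounds give precompactness of $\{u_\lambda\}$ in $C^0_{\mathrm{loc}}$, and subsequential limits $u_\infty$ are convex with $C_1|x|^2\leq u_\infty(x)\leq C_2|x|^2$. In the rescaled~\eqref{f-trans}, the radial curvature is of lower order than the angular ones; passing to the limit (using $|Du|\to\infty$ at infinity, $1$-homogeneity, and nondegeneracy of $f$) forces $u_\infty(x)=|x|^2/(2f(0,\e))$ uniquely. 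Thus $u(x)=|x|^2/(2f(0,\e))+o(|x|^2)$. Linearizing~\eqref{f-trans} about $v$ yields an equation for $w:=u-v$ that is uniformly elliptic at infinity; elliptic regularity combined with the growth control, Theorem~\ref{T1}, and a standard bootstrap upgrades the $o(|x|^2)$ matching to the smooth asymptotic matching required by Theorem~\ref{T3}, which then concludes $u\equiv v+c$.

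The principal obstacle is the blow-down together with the subsequent bootstrap: identifying the effective limiting equation under the anisotropic rescaling, proving rigidity of the blow-down limit to a single convex quadratic profile, and then upgrading $C^0$-matching of leading terms to the smooth asymptotic hypothesis needed for Theorem~\ref{T3}. Nondegeneracy of $f$ at $(0,\e)$ is essential throughout, both for the rigidity of the blow-down limit and for the uniform ellipticity of the linearization used in the bootstrap.
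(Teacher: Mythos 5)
Your argument for $a\leq 2\leq b$ is essentially the paper's: both compare $\Sigma$ with vertical translates of the bowl-type soliton $P$, use the growth hypothesis to guarantee that the extremal translate touches $\Sigma$ at a finite interior tangency point, and conclude $\Sigma=P$ from a maximum principle, contradicting the assumed non-quadratic growth. One caveat: the tool you need is the tangency principle for fully nonlinear curvature flows (Theorem \ref{T2}), whose applicability rests on the convexity of $\Sigma$ and the strict convexity of $P$ (so that the convex combination of the two local graphs near the touching point remains admissible for $f$), not on uniform ellipticity of the linearization at the asymptotic profile $(0,\e)$ --- the contact point can occur anywhere, not just near infinity. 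With that correction this half is fine and matches the paper.

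For the rigidity case $a=b=2$ the two routes diverge. The paper simply asserts that the two-sided quadratic bounds make $\Sigma$ smoothly asymptotic to $P$ and then invokes Theorem \ref{T3}; you attempt to justify that assertion by a blow-down followed by a linearized bootstrap. Identifying this as the crux is to your credit, but as written the two decisive steps are claimed rather than proved: (i) uniqueness of the blow-down limit $u_\infty(x)=|x|^2/(2f(0,\e))$ for a general nondegenerate $f$ requires identifying the degenerate, anisotropic limiting equation and classifying its convex solutions, which is a classification-of-convex-translators type result that you do not supply; and (ii) Theorem \ref{T3} requires the full expansion $\tfrac{|x|^2}{2}-\tfrac{c}{2}\ln(|x|^2)+O(|x|^{-1})$, and upgrading a $C^0$ matching of the leading quadratic term to this precise expansion --- in particular producing the logarithmic correction --- is exactly what would have to be proved; ``elliptic regularity and a standard bootstrap'' does not obviously yield it. So this half of your proposal contains a genuine gap, although, to be fair, the paper's own one-line treatment of the same step leaves the identical gap unaddressed.
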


The organization of this article goes as follows: In Section \ref{sec:Prelim}, we discuss the preliminaries of the differential geometry of axially symmetric translators and the ODE theory needed for the analysis of the ODE that the translator solves. In Section \ref{Asym}, we derive the asymptotic expansion up to $o(|x|^{-2}$) of the slope field of the translator. We use this estimate in Section \ref{sec: Uniqueness} to prove the uniqueness result in Theorem \ref{T3}. Section \ref{Sn} discusses the degenerate example $\sqrt[n]{S_n}.$ Section \ref{wing-like sec} concerns the discussion of winglike translators. Finally, in Section \ref{sec:Application} we give a proof of Theorems \ref{T4}.
\newline

\textbf{Acknowledgment:} 
We would like to acknowledge the support of our advisors Drs. Mat Langford and Theodora Bourni, and Franciso Martín, Miguel Sanchez and Mariel Saez, both in terms of technical discussions as well as travel funding.
The second author was partially supported by the project
	PID2020-116126GB-I00 funded by MCIN/ AEI /10.13039/501100011033,
	by the project PY20-01391 (PAIDI 2020) funded by Junta de Andaluc\'{\i}a
	FEDER and by the framework of IMAG-Mar \'{\i}a de Maeztu grant CEX2020-
	001105-M funded by MCIN/AEI/ 10.13039/50110001103.

\section{Preliminaries}\label{sec:Prelim}

\subsection{The rotational translator ODE} \label{sec:TODE}

For a real-valued $C^2$-function $u$ of a single real variable we consider $\Sigma$ to be the graph of $y=u(r)$, where $r=|x|$ and $x\in\R^n$. 
\\

Then, the inward unit normal of $\Sigma$ at a point $(x,u(r))$ is given by
\begin{align*}
    \Vec{N}=\left(\frac{u'}{\sqrt{1+u'^2}}\frac{x}{r},\frac{-1}{\sqrt{1+u'^2}}\right) \in \R^n \times \R
\end{align*}
Moreover, the principal curvatures of $\Sigma$ are given by
\begin{align}\label{Principal curvatures}
    \lambda_1=\frac{u''}{(1+u'^2)^{3/2}}\mbox{ and } \lambda_i=\frac{u'}{r\sqrt{1+u'^2}},\mbox{ for } i=2,\ldots,n.
\end{align}
\begin{definition}\label{Nondegenerate}
    A speed function is a function $f:\Gamma\to\R$ that satisfies properties \ref{a}-\ref{d}. A speed function $f$ is said to be \emph{nondegenerate} if $f(0,\e)>0$, where $\e=(1,\ldots,1)\in\R^{n-1}$.
\end{definition}
\begin{remark}
We emphasize that being \emph{nondegenerate} is equivalent to requiring that the cylinder $S^{n-1}\times \R$ is not a stationary solution to the $f$-flow \eqref{f-flow}.
\end{remark}
\begin{example}
The class of speed functions $f:\Gamma\to\mathbb{R}$ that additionally satisfies Property \ref{e} is vast and includes:
\begin{itemize}
    \item The mean curvature $H=\lambda_1+\ldots+\lambda_n$, supported in $\Gamma_1=\set{\lambda\in\mathbb{R}^n:H>0}$.
    \item The $k$-th roots of the symmetric elemental polynomial $\sqrt[k]{S_k}$, where
    \begin{align*}
        S_k(\lambda)=\sum_{1\leq i_1<\ldots<i_k\leq n}\lambda_{i_1}\ldots\lambda_{i_k},
    \end{align*}
    supported in the g$\accentset{\circ}{a}$rdin cone $\Gamma_k:=\set{\lambda\in\mathbb{R}^n:S_l(\lambda)>0,\: l=1,\ldots, k}$. 
    \item The inverse of the $k$-th harmonic sum $\left(\sum\limits_{1\leq i_1<\ldots<i_k\leq n}\dfrac{1}{\lambda_{i_1}+\ldots+\lambda_{i_k}}\right)^{-1}$ supported in $\Gamma=\set{\lambda\in\mathbb{R}^n: \lambda_1+\ldots+\lambda_k>0}$, where $\lambda_1\leq\ldots\leq\lambda_n$.
    \item Any $1$-homogeneous symmetric combination of the above functions. 
\end{itemize}
It is worth to mention that by removing hypothesis \ref{e}, the Hessian quotients functions $Q_{k,l}=\left(\dfrac{S_k}{S_l}\right)^{\frac{1}{k-l}}$ supported in $\Gamma_k$ can be included in this class of functions.
\end{example}
\begin{remark}\label{Rem 2}
Due to axial symmetry of $\Sigma$, $f$ only depends on two variables (because there are only two distinct principal curvatures), and hence we sometimes use $f(x,y)$ instead of $f(x,y\e).$    
\end{remark}

Therefore, by Remark \ref{Rem 2} equation 
\begin{align*}
    f(\lambda)=\pI{\nu,e_{n+1}}
\end{align*}
for a rotationally symmetric graph $(x, u(r))$ with respect to $x_{n+1}$-axis is given by 
\begin{align*} 
    f\left(\frac{u''}{(1+u'^2)^{3/2}},\frac{u'}{r\sqrt{1+u'^2}}\right)=\frac{1}{\sqrt{1+u'^2}}\,.
\end{align*}
We can reduced the above equation to a first order ODE by setting $v=u'$, and with the $1$-homogeneity of $f$ we may write it by 
\begin{equation}\label{f-trans ODE f=1}
    f\left(\frac{v'}{1+v^2},\frac{v}{r}\right)=1.
\end{equation}
\begin{remark} \label{def:g}
   Geometrically, $v$ is the gradient of the profile curve $(r,u(r))\in\R^2$, and the solution is unique up to vertical translations. 
\end{remark}

In addition, since $f$ is strictly monotone in each argument, we may apply the Implicit Function theorem to obtain a unique function $x=g(y,z)$ in the sense that 
\begin{align}\label{wing-like f}
    f(g(y,z),y )=z.
\end{align}
Note that in Eq. \eqref{f-trans ODE f=1}, we have $z=1$, and in this case, we will suppress the second argument and refer to $g(y,1)$ as simply $g(y)$. We will return to using $g(y,z)$ in Section \ref{wing-like sec}.
\\

Moreover, since $f$ is $C^1$ and has a non-singular derivative w.r.t. $x$, we have that $g$ is of class $C^1$ as well.  
\newline

Consequently, the slope function $v$ of a rotationally symmetric $f$-translators satisfies
\begin{align}\label{f-trans ODE g}
    \begin{cases}
    v'(r)=\left(1+v^2(r)\right) g \left (\dfrac{v(r)}{r}\right), r\geq 0,
    \\
    v(0)=0.
    \end{cases}, 
\end{align}
and we can recover $u(r)$ from $v(r)$ via an integration procedure. 
\newline

\begin{remark}
It is important to mention that even though Equation \eqref{f-trans ODE g} appears to have a singularity for the  initial condition $v(0)=0$, this is only a coordinate singularity.
\\
We refer the interested reader to \cite{rengaswami2021rotationally} for a study of general $\alpha-$homogeneous speeds, where  the questions of existence, regularity, uniqueness, and convexity of this solution are addressed. A complete classification of speeds based on whether the resulting solution is asymptotically cylindrical is also presented therein.    
\end{remark}

\begin{example}
The following examples are the expressions of the function $f(x,y)$ and $g(y)$ for the following speed functions:
    \begin{enumerate}
    \item The mean curvature, $H$:
     \begin{align*}
      f(x,y)=x+(n-1)y\mbox{ and }g(y)=1-(n-1)y.   
     \end{align*} 
     \item The $k$th-root of the symmetric elemental polynomials, $\sqrt[k]{S_k}$:
\begin{align*}
        f(x,y)=\sqrt[k]{\binom{n-1}{k}y^k +\binom{n-1}{k-1}xy^{k-1}}\mbox{ and }g(y)=\binom{n-1}{k-1}^{-1}y^{1-k}-\frac{n-k}{k}y.
\end{align*}
    \item The quotients of the symmetric elemental polynomials, $Q_{k+1,k}=\dfrac{S_{k+1}}{S_k}$: 
    \begin{align*}
    f(x,y)=\dfrac{\binom{n-1}{k}xy^{k}+\binom{n-1}{k+1}y^{k+1}}{\binom{n-1}{k-1}xy^{k-1}+\binom{n-1}{k}y^{k}}\mbox{ and }g(y)=\dfrac{n-k}{k+1}y\dfrac{(k+1)-(n-k-1)y}{(n-k)y-k}.
    \end{align*}
\end{enumerate}
\end{example}

\subsection{Differentiability properties of \emph{nondegenerate} $f$}
We are interested in speed function $f$ that satisfies $f(0,1)>0$, in this section and beyond, we normalize $f$ so that $f(0,1)=1$. 
\\

In addition, since $f(x,y)$ is $1$-homogeneous, we have the identity
\[f(x,y)=f_x x+f_y y.\]
where $f_x=\frac{\partial f}{\partial x}$ and $f_y=\frac{\partial f}{\partial x}$.
\\
We note that the partial derivatives $f_x$ and $f_y$ are $0$-homogeneous functions, and  by our normalization, we have 
\begin{align}\label{f_y(0,1)}
    f_y(0,1)=1
\end{align}
Now, we outline an estimation trick that we will use repeatedly in this paper. 
\\
Due to $1$-homogeneity of $f$, we have $ f(x,y)=yf(x/y,1)$:
\begin{itemize}
    \item When $f$ is $C^1$, using the mean value theorem, we may write
\begin{align}\label{Taylor1}
    f(x,y)=y(1+f_x(\xi,1))\dfrac{x}{y}=y+f_x(\xi,1)x
\end{align}
for some $0\leq \xi \leq \dfrac{x}{y}$.
\item When $f$ is $C^2$,  Taylor's remainder theorem yields
\begin{equation}\label{Taylor2}
    f(x,y)=y\left(1+f_x(0,1)\dfrac{x}{y}+\frac{1}{2}f_{xx}(\xi,1)\dfrac{x^2}{y^2}\right)=y+f_x(0,1)x+\frac{1}{2y}f_{xx}(\xi,1)x^2
\end{equation}
\end{itemize}

Therefore, by the normalization of $f$, we will assume in most calculations in this paper that 
\begin{align*}
    0<x\leq y\Leftrightarrow 0 < \xi \leq 1,
\end{align*}
and consequently, $|f_x(\xi,1)|,|f_{xx}(\xi,1)|$ are bounded by compactness of $[0,1]$ and the continuity of these functions.

\subsection{Differentiability properties of $g$}
Recall that there exist a unique $\mathcal{C}^1$ function $g$ that satisfies $f(g(y),y)=1$. 
\\

Then, due to our normalization $f(0,1)=1$, we have $g(1)=0$. Moreover, by chain rule,
\[f_x(g(y),y)g_y(y)+f_y(g(y),y)=0\]
which, by suppressing the arguments, we get
\[f_xg_y+f_y=0.\]
Differentiating once more, we get
\[f_{xx}g_y^2+2f_{xy}g_y+f_{yy}+f_xg_{yy}=0.\]

Next, we note that whenever $f_x \neq 0$ it holds
\[g_y=-\frac{f_y}{f_x}.\]
In particular, $g_y \leq 0$, indicating that $g(y)$ is decreasing.

\begin{remark}
In particular, when $f_{xx},f_{xy},f_{yy}$ are defined at $(x,y)=(g(y),y)$, we have
\[g_{yy}=-\frac{f_{xx}g_y^2+2f_{xy}g_y+f_{yy}}{f_x}.\]
Therefore, it is interesting that $g$ will be convex when $f$ is concave and vice versa. However, since our results do not rely on the convexity properties of $f$, we won't be using this fact.    
\end{remark}

\subsection{ODE Theory}\label{subsec:ODE}
Throughout the paper we will use the technique of super-solutions and sub-solutions for an ODE of the form
\begin{equation} \label{generalode}
  x'(t)=f(t,x(t)).
\end{equation}
We refer the reader to \cite{teschl2012ordinary} for the following classic ODE results. 
\\

A differentiable function $x_+(t)$ satisfying
\[x_+'(t)>f(t,x_+(t))\] is called a \emph{super-solution} to (\ref{generalode}). Similarly, a differentiable function $x_-(t)$ satisfying
\[x_-'(t)<f(t,x_-(t))\] is called a \emph{sub-solution} to \eqref{generalode}.

\newtheorem{Lemma}{Lemma}
\begin{lemma}
Let $x_+(t)$, $x_-(t)$ be super, sub-solutions of the differential equation $x'=f(t,x)$ on $[t_0,T)$ respectively. For every solution $x(t)$ on $[t_0,T)$ we have 
\[x(t) < x_+(t), t \in [t_0,T) \text{ whenever }  x(t_0) \leq x_+(t_0)\]
respectively
\[x_-(t) < x(t), t \in [t_0,T) \text{ whenever } x_-(t_0) \leq x(t_0)\]
\end{lemma}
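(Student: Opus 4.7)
The plan is to run a standard comparison argument via first-touching time, handled separately for the super-solution and sub-solution halves (the argument is symmetric, so I would only write one out in detail). Set $h(t) \coloneqq x_+(t) - x(t)$ on $[t_0,T)$, so the hypothesis reads $h(t_0) \geq 0$, and the conclusion to prove is $h(t) > 0$ for every $t \in (t_0, T)$. The defining inequalities give
\begin{equation*}
h'(t) = x_+'(t) - x'(t) > f(t, x_+(t)) - f(t, x(t)) \qquad \text{whenever } x_+(t) = x(t),
\end{equation*}
which vanishes at such touching points. This is the mechanism that forces strict separation.

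First I would settle the base case $t = t_0$: if $h(t_0) > 0$, continuity of $h$ gives $h > 0$ on a right neighborhood of $t_0$; if $h(t_0) = 0$, then $h'(t_0) > 0$ by the displayed inequality above, so again $h > 0$ on some interval $(t_0, t_0 + \delta)$. Thus in either case the set $S \coloneqq \{\, t \in [t_0, T) : h(s) > 0 \text{ for all } s \in (t_0, t] \,\}$ is a nonempty interval containing an open right neighborhood of $t_0$.

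Next I would show $S = (t_0, T)$ by contradiction. Let $t_1 \coloneqq \sup S$ and assume $t_1 < T$. By continuity of $h$ and the definition of $t_1$, one has $h(t_1) = 0$ and $h(t) > 0$ for $t \in (t_0, t_1)$. Consequently the difference quotient satisfies
\begin{equation*}
\frac{h(t_1) - h(t)}{t_1 - t} \leq 0 \qquad \text{for } t \in (t_0, t_1),
\end{equation*}
so passing to the limit yields $h'(t_1) \leq 0$. On the other hand, the displayed inequality evaluated at $t_1$ gives $h'(t_1) > f(t_1, x_+(t_1)) - f(t_1, x(t_1)) = 0$, since $x_+(t_1) = x(t_1)$. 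This contradiction forces $t_1 = T$, proving the super-solution claim.

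The sub-solution statement follows verbatim by replacing $h$ with $\tilde h(t) \coloneqq x(t) - x_-(t)$ and reversing the role of the strict inequality in the hypothesis. The only delicate point, and the place I would be most careful, is the base case when $x(t_0) = x_+(t_0)$: there the strict inequality in the definition of super-solution is exactly what upgrades the non-strict initial condition to strict separation for $t > t_0$, so it is crucial that the defining inequality be strict and that $f$ be continuous so that $f(t, x_+(t)) - f(t, x(t)) \to 0$ as $x_+(t) - x(t) \to 0$. Beyond this, the argument is purely topological and requires no further hypotheses on $f$ other than those guaranteeing that $x(t)$ is a well-defined $C^1$ solution.
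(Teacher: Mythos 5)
Your proof is correct: the first-touching-time argument, with the strict inequality in the definition of super-solution killing the possibility $h'(t_1)\leq 0$ at a first zero $t_1$ of $h=x_+-x$, is exactly the standard comparison argument. The paper does not prove this lemma at all --- it simply cites Teschl's textbook --- and your argument is essentially the proof given there, so there is nothing to reconcile. (Two cosmetic remarks: the conclusion can only be strict for $t\in(t_0,T)$ when $x(t_0)=x_+(t_0)$, which your proof correctly reflects; and the continuity of $f$ you invoke at the end is not actually needed at a touching point, since there $f(t,x_+(t))-f(t,x(t))$ is identically zero rather than merely small.)
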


\begin{remark}
    If one replaces strong inequality by weak inequality in the definitions of sub and super-solutions, one gets weak inequalities instead of strong ones in the above lemma.
\end{remark}

\section{Asymptotic of bowl-type solutions}\label{Asym}

Recall from the introduction that a bowl-type soliton is a complete strictly convex smooth solution of \eqref{f-trans ODE g} with $u'=v$, which is defined in $[0,R)$ where $R\in\set{\frac{1}{f(1,1)},\infty}$. 
\\

Furthermore, by the \emph{nondegenerate} property together with the normalization $f(0,1)=1$, we have that $R=\infty$ and  
\begin{align*}
u(r)=r^2+o(r^2),\mbox{ as }r\to\infty.  
\end{align*}

In this section we will prove Theorem \ref{T1} which state:
\begin{theorem}
 Assume that $f$ is a normalized \emph{nondegenerte} speed function. Then, the corresponding bowl-type soliton for this flow has the asymptotics
    \[u(r)=\frac{r^2}{2}-c\log r+o(r^{-1}),\mbox{ as }r\to\infty,\]
    where $c=f_x(0,1)$.
\end{theorem}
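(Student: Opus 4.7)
The plan is to work with the slope ODE \eqref{f-trans ODE g} and derive progressively sharper asymptotics for $v=u'$, then integrate. Set $c := f_x(0,1)$. Differentiating $f(g(y),y)=1$ at $y=1$, together with the normalization $f(0,1)=1$ and the Euler identity $f=xf_x+yf_y$ (which forces $f_y(0,1)=1$, cf.\ \eqref{f_y(0,1)}), gives $g(1)=0$ and $g_y(1)=-1/c$. Since the bowl is an entire convex graph with $u(r)/r^2\to 1/2$, I take $v(r)/r\to 1$ as known input from the existence theory in \cite{rengaswami2021rotationally}. The remaining task is to understand the deviation $w(r):=v(r)-r$.

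Substitute $v=r+w$ into \eqref{f-trans ODE g} and Taylor-expand $g$ about $1$: writing $g(1+s)=-s/c+O(s^2)$ and expanding $1+v^2=r^2+2rw+O(1)$, one obtains an ODE of the schematic form
\begin{equation*}
w'(r)+\frac{r}{c}\,w(r)=-1+\mathcal{E}(r,w),
\end{equation*}
where $\mathcal{E}$ is quadratic in $w$ and of relative size $O(1/r^2)$. Since $w\to 0$, the right-hand side is $-1+o(1)$, so the balance $r w/c\approx -1$ predicts $w\sim -c/r$. To make this rigorous, I invoke the comparison machinery of Section \ref{subsec:ODE}: for any $\delta>0$ the functions $w_\pm(r):=(-c\pm\delta)/r$ are, respectively, a super- and a sub-solution of the above ODE on $[R_\delta,\infty)$ for $R_\delta$ large (direct substitution turns the requirement into $\pm\delta/c\geq \mathcal{E}+O(r^{-2})$, which holds uniformly once $|w|$ is small). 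Sandwiching $w$ between $w_\pm$ and letting $\delta\downarrow 0$ gives $w(r)=-c/r+o(1/r)$.

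Next I refine: write $w(r)=-c/r+q(r)$ with $q=o(1/r)$, substitute, and cancel the leading terms to get
\begin{equation*}
q'(r)+\frac{r}{c}\,q(r)=\frac{A}{r^2}+o(r^{-2}),\qquad A:=1-3c+\tfrac{1}{2}g_{yy}(1)\,c^2.
\end{equation*}
The same comparison argument with barriers of the form $q_\pm(r)=(Ac\pm\delta)/r^3$ yields $q(r)=O(r^{-3})$, and therefore $v(r)=r-c/r+O(r^{-3})$. Integrating from a large $r_0$ and absorbing the integration constant into the permitted vertical translation (Remark \ref{def:g}) produces
\begin{equation*}
u(r)=\frac{r^2}{2}-c\log r+O(r^{-2}),\qquad r\to\infty,
\end{equation*}
which is stronger than the claimed $o(r^{-1})$. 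Smooth asymptotics (the same leading expansion in the $C^k$ topology) follow by differentiating the ODE and iterating the barrier argument on $v'$, $v''$, etc., or equivalently by differentiating the translator equation and applying standard interior elliptic estimates.

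The main obstacle I anticipate is the rigorous bookkeeping in the sub/super-solution step. Because the coefficient $r/c$ in the linearised ODE grows, the homogeneous solutions of the linearisation behave like $\exp(-r^2/(2c))$ and are harmless, but one must verify that the nonlinear error $\mathcal{E}(r,w)$ is genuinely dominated by the linear term uniformly on $[R_\delta,\infty)$ with $R_\delta$ independent of the (unknown) barrier constants. Keeping track of which Taylor remainders require only $C^1$ of $f$ (as in \eqref{Taylor1}) versus $C^2$ (as in \eqref{Taylor2}) is the delicate part; the coefficient $g_{yy}(1)$ appearing in $A$ is never actually used in the final answer, so only the first-order expansion of $g$ and a crude bound on its second derivative are strictly needed.
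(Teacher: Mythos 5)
Your overall strategy is the same as the paper's: bootstrap the slope ODE \eqref{f-trans ODE g} with sub- and super-solutions, sharpening from $v=r+o(r)$ to $v=r-c/r+o(r^{-2})$, then integrate; your reformulation in terms of the deviation $w=v-r$ and the linearised equation $w'+\tfrac{r}{c}w=-1+\mathcal{E}$ is only a change of variables relative to the paper's barriers $r$, $r-\varepsilon$, $r-(c\mp\varepsilon)/r$, $r-c/r\pm\varepsilon/r^2$.

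There is, however, one genuine gap, and it is not the one you flag at the end. The comparison lemma of Section \ref{subsec:ODE} only propagates an ordering \emph{forward from a radius where it already holds}: to conclude $w\geq w_{-}$ for large $r$ you must first exhibit some $r_1$ with $w(r_1)\geq w_{-}(r_1)$, and similarly for the upper barrier. Your phrase ``sandwiching $w$ between $w_\pm$'' assumes exactly this anchor point, which is where the paper spends half of each proposition: in each case it argues by contradiction that if $v$ stayed strictly on the wrong side of the barrier for all large $r$, then $v'$ would be bounded below by $(1+\delta/c)+o(1)$ (resp.\ above by $(1-\delta/c)+o(1)$), forcing linear growth incompatible with the already-established bound from the previous stage. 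To run that contradiction at the $-c/r$ stage you need quantitative control of the solution itself (not just of the barrier) inside $\mathcal{E}$ — the paper inserts the intermediate estimate $v=r+o(1)$ (Proposition \ref{o(1)}) precisely so that $(1+v^2)\,g(v/r)=-\tfrac{rw}{c}\bigl(1+o(1)\bigr)$ along the solution; with only $w=o(r)$ this still works but the multiplicative $o(1)$'s must be tracked explicitly, and your write-up does neither. Separately, your claim $q=O(r^{-3})$ and the explicit constant $A$ involving $g_{yy}(1)$ require pointwise second derivatives of $g$ at $1$ (hence more regularity of $f$ than the compact Taylor-remainder bound \eqref{Taylor2} provides); the paper only proves $v=r-c/r+o(r^{-2})$, which already suffices for the stated asymptotics of $u$, so you should either weaken your claim to match or justify the extra regularity.
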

We will prove the above theorem by showing that $v$ satisfies
\begin{align*}
    v(r) = r-\frac{c}{r}+o(r^{-2}),\mbox{ as }r\to\infty,
\end{align*}
whence the claim of the theorem follows at once by integration.

\begin{remark}
    Intuitively,  a differentiable solution $v$ to \eqref{f-trans ODE g} has the symmetries of an odd function, and hence one does not expect even powers in the asymptotic expansion, and indeed one can formally plug in a Laurent series for $v$, and without much effort, one can determine the coefficients of the asymptotic expansion. In what follows, we provide rigorous proofs for the correctness of these coefficients.
\end{remark}
The method that we will follow consists in a bootstrapping approach to progressively refine the asymptotic terms. For this, we will use the ODE's theory of sub- and super-solutions denoted by $w_{\pm,\varepsilon}(r)$, where the $\varepsilon$ will be used in asymptotic  little-oh notation.  
\\

On the other hand, from Section \ref{subsec:ODE}, we recall that $w$ is a sub-solution (super-solution resp.) to Eq. \eqref{f-trans ODE g} if 
\[w' \leq  (1+w^2)g(w/r),\:(\geq \mbox{ resp.}).\]
However, in some cases it may be more convenient to check the following equivalent condition
\begin{align}\label{eqn: fsubsol} 
    f\left(\frac{w'}{1+w^2},\frac{w}{r}\right) \leq 1,\:(\geq \mbox{ resp.}).
\end{align}

\begin{proposition}\label{o(r)}
The functions 
\begin{align*}
 w_+(r)=r  \mbox{ and }w_{-,\varepsilon}(r)=(1-\varepsilon)r 
\end{align*}
satisfy the following properties:
    \begin{enumerate}
        \item $w_+(r)$ is a super-solution to Eq. \eqref{f-trans ODE g} $r>0$. 
        \item For every $\epsilon\in (0,1)$, $w_{-,\varepsilon}(r)$ is a sub-solution to Eq. \eqref{f-trans ODE g} for sufficiently large $r$. Moreover, given any $r_0>0$, there exists $r_1>r_0$ such that  $v(r_1)\geq r_1$. Thus, $v \geq (1-\varepsilon)r$ for sufficiently large $r$.
    \end{enumerate}
\end{proposition}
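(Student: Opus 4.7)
The plan splits the proposition into two substitution checks (both routine) and a crossing assertion, which is the real content. For (1), substituting $w_+(r)=r$ into the right-hand side of \eqref{f-trans ODE g} uses only that $g(1)=0$: the normalization $f(0,1)=1$ together with the defining relation $f(g(y),y)=1$ forces $g(1)=0$, so the right-hand side collapses to $(1+r^2)\cdot 0=0$, while $w_+'(r)=1$. The super-solution inequality is thus strictly satisfied for every $r>0$. For the sub-solution half of (2), the analogous substitution yields right-hand side $(1+(1-\varepsilon)^2 r^2)\, g(1-\varepsilon)$; since $g_y=-f_y/f_x<0$ (both partials are positive by property \ref{c}), $g$ is strictly decreasing, so $g(1-\varepsilon)>g(1)=0$, and this right-hand side grows quadratically in $r$, dominating the constant $w_{-,\varepsilon}'(r)=1-\varepsilon$ once $r$ is large enough.

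The only non-routine step is to produce, for a given $r_0>0$, some $r_1>r_0$ at which $v(r_1)\ge w_{-,\varepsilon}(r_1)=(1-\varepsilon)r_1$. I would argue by contradiction: suppose $v(r)<(1-\varepsilon)r$ for all $r\ge r_0$. Then $v(r)/r<1-\varepsilon$, and strict monotonicity of $g$ gives $g(v(r)/r)>g(1-\varepsilon)=:g_0>0$ uniformly. The ODE \eqref{f-trans ODE g} then yields the Bernoulli-type differential inequality $v'(r)>v(r)^2\,g_0$, which integrates to $1/v(r)<1/v(r_0)-g_0(r-r_0)$ and forces $v$ to blow up at a finite value of $r$. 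This contradicts the global existence of the bowl-type slope on $[0,\infty)$, which is itself a consequence of nondegeneracy (cf.\ \cite{rengaswami2021rotationally}). Hence such $r_1$ must exist.

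With such $r_1$ in hand (enlarged if necessary so that $w_{-,\varepsilon}$ is already a sub-solution on $[r_1,\infty)$), a direct application of the comparison lemma of Section \ref{subsec:ODE} gives $v(r)\ge(1-\varepsilon)r$ for all $r\ge r_1$, completing the proof. The main obstacle is the crossing step above: it is the single place where the strict positivity of $g(1-\varepsilon)$ (equivalently, nondegeneracy) and the global existence of $v$ are simultaneously used, and without it the sub-solution inequality, though true, would be vacuous.
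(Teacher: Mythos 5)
Your proposal is correct and follows essentially the same route as the paper: the paper verifies the super- and sub-solution inequalities via the equivalent $f$-form \eqref{eqn: fsubsol} (using monotonicity of $f$ and $f(0,1-\varepsilon)<f(0,1)=1$), whereas you verify them directly on the $g$-form using $g(1)=0$ and $g(1-\varepsilon)>0$; these are interchangeable by the paper's own remark. Your crossing argument — contradiction, $g(v/r)>g(1-\varepsilon)>0$, Riccati-type blow-up contradicting global existence of $v$ — is exactly the paper's.
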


\begin{proof}
The proofs are all direct computations.
    \begin{enumerate}
        \item Firstly, we note that $f(x,y)$ is increasing in each variable, then it holds
        \begin{align*}
            f\left(\frac{w_+'}{1+w_+^2},\frac{w_+}{r}\right)=f\left(\frac{1}{1+r^2},1\right)
            \geq f\left(0,1\right)
            =1.
        \end{align*}
        Therefore,  $w_+(r)$ is a super-solution to Eq. \eqref{f-trans ODE g} $r>0$.

        \item Next, by evaluating $w_{-,\varepsilon}$ in \eqref{f-trans ODE f=1} and taking the limit as $r\to\infty$, we see that
        \begin{align*}
            \displaystyle \lim_{r\to\infty} f\left(\frac{w_{-,\varepsilon}'}{1+w_{-,\varepsilon}^2},\frac{w_{-,\varepsilon}}{r}\right)&= \lim_{r\to\infty} f\left(\frac{1-\epsilon}{1+((1-\varepsilon)r)^2},1-\varepsilon\right)
            \\
            &=f\left(0,1-\varepsilon\right)
            \\
            &<f(0,1)=1.
        \end{align*}
        Therefore, the claim is true for sufficiently large $r$.
      \\
      
        On the other hand, we will prove the following part  by contradiction. Let $r_0>0$ and assume that $v(r)<(1-\epsilon)r$ for all $r>r_0$. Then, since $g(y)$ is decreasing in $y$, it follows that
        \begin{align*}
            v'&=(1+v^2)g\left(\frac{v}{r}\right)\\
              &> (1+v^2)g\left(1-\varepsilon\right)\\
              &=C(1+v^2)
        \end{align*}
        for $r>r_0$ and some $C>0$. Consequently, this inequality implies that $v$ blows up at some finite $r_1>r_0$ contradicting that $v(r)$ exist for all $r\geq 0$.    \end{enumerate}
\end{proof}

\begin{remark}
   Proposition \ref{o(r)} implies that the solution $v(r)=r+o(r)$ as $r\to\infty$.
\end{remark}

\begin{proposition} \label{o(1)}
The function $w_{-,\varepsilon}(r)=r-\varepsilon$ satisfies:
    \begin{enumerate}
        \item For every $\varepsilon\in (0,1)$, $w_{-,\varepsilon}(r)$ is a sub-solution to Eq. \eqref{f-trans ODE f=1} for sufficiently large $r$.
        \item Given any $r_0>0$, there exists $r_1>r_0$ such that  $v(r_1)\geq r_1$. Thus, $v \geq r-\epsilon$ for sufficiently large $r$.
    \end{enumerate}
\end{proposition}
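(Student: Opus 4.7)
The plan is to follow the template of Proposition \ref{o(r)}: verify the sub-solution inequality by applying the first-order Taylor estimate \eqref{Taylor1} to $f$, and then produce the required initial point $r_1$ by a contradiction argument that bootstraps the lower bound $v(r)\ge(1-\delta)r$ established in Proposition \ref{o(r)}.

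For Part 1, I would write $w(r):=w_{-,\varepsilon}(r)=r-\varepsilon$, so that $w'=1$ and
\[
\frac{w'}{1+w^2}=\frac{1}{1+(r-\varepsilon)^2}\longrightarrow 0,\qquad \frac{w}{r}=1-\frac{\varepsilon}{r}\longrightarrow 1\quad\text{as }r\to\infty.
\]
For $r$ large the two arguments lie in the regime $0\le x\le y$ covered by \eqref{Taylor1}, which yields
\[
f\!\left(\tfrac{w'}{1+w^2},\tfrac{w}{r}\right)=\left(1-\tfrac{\varepsilon}{r}\right)+f_x(\xi,1)\cdot\tfrac{1}{1+(r-\varepsilon)^2}=1-\tfrac{\varepsilon}{r}+O(r^{-2}),
\]
where the remainder is controlled because $f_x(\cdot,1)$ is bounded on $[0,1]$. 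For $r$ sufficiently large this is strictly less than $1$, verifying \eqref{eqn: fsubsol} and showing that $w_{-,\varepsilon}$ is a sub-solution to \eqref{f-trans ODE f=1}.

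For Part 2, I would argue by contradiction: suppose $v(r)<r-\varepsilon$ for every $r>r_0$. Applying \eqref{Taylor1} to the translator ODE $f(v'/(1+v^2),v/r)=1$ and solving for $v'$ gives
\[
v'(r)=\frac{(1+v^2)(r-v)}{r\,f_x(\xi,1)}\;\ge\;\frac{(1+v^2)\,\varepsilon}{r\,f_x(\xi,1)},
\]
where $\xi$ lies between $0$ and $v'/((1+v^2)(v/r))$. Since $v/r\to 1$ forces $v'/(1+v^2)\to 0$, one has $\xi\to 0$, so $f_x(\xi,1)$ stays bounded above. Combining with the lower bound $v(r)\ge(1-\delta)r$ from Proposition \ref{o(r)} for any fixed $\delta\in(0,1)$ yields $v'(r)\ge Cr$ for some $C>0$ and all sufficiently large $r$. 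Integrating produces a quadratic lower bound $v(r)\ge\tfrac{C}{2}r^2+O(1)$, which eventually exceeds $r-\varepsilon$, a contradiction. Hence some $r_1>r_0$, which we also take past the threshold from Part 1, satisfies $v(r_1)\ge r_1-\varepsilon=w_{-,\varepsilon}(r_1)$, and the sub-solution comparison lemma from Section \ref{subsec:ODE} then delivers $v(r)\ge r-\varepsilon$ for all $r\ge r_1$.

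The main obstacle, compared to Proposition \ref{o(r)}, is that here $g(v/r)$ vanishes as $r\to\infty$ (since $v/r\to 1$ and $g(1)=0$), so the finite-time blow-up argument of the previous proposition no longer applies. The remedy is the algebraic rearrangement above, which trades the vanishing factor $g(v/r)\sim(r-v)/r$ against the growing factor $1+v^2\sim r^2$ to still extract super-linear growth of $v$ and reach a contradiction. This is the first small instance of the bootstrapping strategy announced at the opening of Section \ref{Asym}, refining $o(r)$ to $o(1)$.
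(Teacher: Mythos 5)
Your proof is correct and follows essentially the same route as the paper: Part 1 is the identical mean-value estimate \eqref{Taylor1}, and Part 2 is the same contradiction-plus-bootstrap argument ending in $v'\ge Cr$ and quadratic growth. The only cosmetic difference is that in Part 2 you obtain the lower bound by solving the $f$-equation for $v'$ directly via \eqref{Taylor1}, whereas the paper applies the mean value theorem to $g$ and uses $g_y(1)=-1/f_x(0,1)$; since $g_y=-f_y/f_x$, these are the same estimate.
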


\begin{proof}
    \begin{enumerate}

        \item We will show that $w_{-\varepsilon}$ verifies Eq. \eqref{eqn: fsubsol}. Indeed, by Eq. \eqref{Taylor1}, we may write
        \begin{align*}
         f\left(\frac{w_{-,\varepsilon}'}{1+w_{-,\varepsilon}^2},\frac{w_{-,\varepsilon}}{r}\right)&=  f\left(\frac{1}{1+(r-\varepsilon)^2},1-\frac{\epsilon}{r}\right)\\
            &=1-\frac{\varepsilon}{r}+\frac{f_x(\xi,1)}{1+(r-\varepsilon)^2}.
         \end{align*} 
         Then, since 
         \begin{align*}
         0\leq \xi \leq \dfrac{1}{\left(1-\dfrac{\varepsilon}{r}\right)(1+(r-\varepsilon)^2)}<1    
         \end{align*}
        for $r$ sufficiently large, we obtain
           \begin{align*}
            f\left(\frac{w_{-,\varepsilon}'}{1+w_{-,\varepsilon}^2},\frac{w_{-,\varepsilon}}{r}\right)&=1-\frac{\varepsilon}{r}+o(r^{-1})<1.
        \end{align*}
        
        for sufficiently large $r$.

        \item Next, by arguing by contradiction, we fix $r_0>0$ and assume that $v(r)<r-\varepsilon$ holds for all $r>r_0$ and $\varepsilon\in (0,1)$. 
        \newline
        Then, by part 2 of Proposition \ref{o(r)}, we have that $v\geq \dfrac{r}{2}$ for sufficiently large $r$ and
        \begin{align*}
            v'&=(1+v^2)g\left(\frac{v}{r}\right)\\
              &> \left(1+\dfrac{r^2}{4}\right)g\left(1-\dfrac{\varepsilon}{r}\right).
        \end{align*}
        Finally, recall that $g(1)=0$. Then, by the mean value theorem, we have 
        \begin{align*}
              v'&\geq \left(1+\dfrac{r^2}{4}\right)\left(g(1)-g_y\left(\xi\right)\dfrac{\varepsilon}{r}\right),
        \end{align*}      
         for some $\xi\in \left(1-\dfrac{\varepsilon}{r},1\right)$. Consequently, since $g$ is decreasing, there is $C>0$ depending on $\varepsilon$ such that     
        \begin{align*}
             v' &\geq Cr,\mbox{ for all }r>r_0.
        \end{align*}
       However, this fact contradicts $v(r)<r-\varepsilon$ for all $r>r_0$. 
        \end{enumerate}
\end{proof}

\begin{remark}
Proposition \ref{o(1)} implies that $v(r)=r+o(1)$ as $r\to\infty$. 
\end{remark}

\begin{proposition} \label{o(r^-1)}
Let $c=f_x(0,1)$ and consider 
\begin{align*}
    w_{+,\varepsilon}(r)=r-\frac{c-\varepsilon}{r}\mbox{ and }w_{-,\varepsilon}(r)=r-\frac{c+\epsilon}{r}.
\end{align*}
    \begin{enumerate}
        \item For every $\varepsilon \in(0,c)$, $w_{+,\varepsilon}(r)$ is a super-solution to Eq. \eqref{f-trans ODE f=1} for sufficiently large $r$. In addition, given any $r_0>0$, there exists $r_1>r_0$ such that  $v(r_1)\leq r_1-\frac{c-\varepsilon}{r_1}$. Thus, $v \leq r-\frac{c-\varepsilon}{r}$ for sufficiently large $r$.
        \item For every $\epsilon>0$, $w_{-,\varepsilon}(r)$ is a sub-solution to Eq. \eqref{f-trans ODE f=1} for sufficiently large $r$. Moreover, given any $r_0>0$, there exists $r_1>r_0$ such that  $v(r_1)\geq r_1$. Thus, $v \geq r-\frac{c+\varepsilon}{r}$ for sufficiently large $r$.
    \end{enumerate}
\end{proposition}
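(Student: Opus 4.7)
The plan is to adapt the bootstrapping strategy of Propositions~\ref{o(r)} and~\ref{o(1)}, now refined to capture the $1/r$ correction in $v$. For each of the two parts, I first verify the super- or sub-solution property of $w_{\pm,\varepsilon}$ via a Taylor expansion of $f$, and then combine the comparison principle of Subsection~\ref{subsec:ODE} with a contradiction argument (using the bounds $v<r$ from Proposition~\ref{o(r)} and $v\geq r-\delta$ for any $\delta>0$ from Proposition~\ref{o(1)}) to locate a point $r_1$ at which $v$ crosses $w_{\pm,\varepsilon}$.

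\emph{Step 1 (super-/sub-solution check).} A direct computation gives, in either sign,
\[
\frac{w'_{\pm,\varepsilon}}{1+w_{\pm,\varepsilon}^{2}} = \frac{1}{r^{2}} + O(r^{-4}), \qquad \frac{w_{\pm,\varepsilon}}{r} = 1 - \frac{c\mp\varepsilon}{r^{2}}.
\]
Substituting these into the $C^{1}$ expansion \eqref{Taylor1} and using the continuity of $f_{x}$ at $(0,1)$ (so that $f_{x}(\xi,1)\to c$ as the first argument tends to $0$) yields
\[
f\!\left(\frac{w'_{\pm,\varepsilon}}{1+w_{\pm,\varepsilon}^{2}},\; \frac{w_{\pm,\varepsilon}}{r}\right) = 1 \pm \frac{\varepsilon}{r^{2}} + o(r^{-2}),
\]
which is the required inequality \eqref{eqn: fsubsol} with the appropriate sign for $r$ sufficiently large.

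\emph{Step 2 (intersection by contradiction).} For part~(1), assume $v(r) > w_{+,\varepsilon}(r)$ for every $r > r_{0}$. Combined with $v<r$, this traps $r-v\in \bigl(0,\,(c-\varepsilon)/r\bigr)$. A mean value linearization of $g$ at $y=1$, using $g(1)=0$ and $g_{y}(1)=-f_{y}(0,1)/f_{x}(0,1)=-1/c$, gives $g(v/r) = \tfrac{r-v}{cr}(1+o(1))$; together with $1+v^{2}=r^{2}(1+o(1))$ this produces
\[
v'(r) = (1+v^{2})\,g(v/r) \leq \Bigl(1-\frac{\varepsilon}{c}\Bigr)(1+o(1)) \leq 1-\frac{\varepsilon}{2c}
\]
for all sufficiently large $r$. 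Integrating yields $v(r)\leq (1-\tfrac{\varepsilon}{3c})\,r+O(1)$, which contradicts the bound $v\geq r-\delta$ of Proposition~\ref{o(1)} for large $r$ once $\delta$ is chosen small enough. Hence some $r_{1}$ with $v(r_{1})\leq w_{+,\varepsilon}(r_{1})$ exists, and comparison then gives $v\leq w_{+,\varepsilon}$ on $[r_{1},\infty)$. Part~(2) is symmetric: assuming $v<w_{-,\varepsilon}$ for all large $r$ forces $v'(r)\geq 1+\tfrac{\varepsilon}{2c}$, hence $v(r)\geq (1+\tfrac{\varepsilon}{3c})\,r$ for large $r$, contradicting $v<r$.

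The most delicate point I expect is the bookkeeping of the $o(1)$ remainders: since only $C^{1}$ regularity of $f$ is assumed, the expansion of $f$ near $(0,1)$ and the linearization of $g$ near $y=1$ both rely on continuity of $f_{x}$ and $g_{y}$ rather than on explicit $C^{2}$ remainders, and one must verify that these corrections are uniform in $r$ throughout the regime $v/r = 1 - O(r^{-2})$ considered above, lest the inequalities $\leq 1-\tfrac{\varepsilon}{2c}$ and $\geq 1+\tfrac{\varepsilon}{2c}$ fail. Once this refinement is in place, letting $\varepsilon\to 0$ gives $v(r)=r-c/r+o(r^{-1})$, and a further iteration of the same bootstrap will be needed to sharpen this to the $o(r^{-2})$ precision required by Theorem~\ref{T1}.
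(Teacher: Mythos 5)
Your proof is correct and follows essentially the same route as the paper's: verify the super-/sub-solution property of $w_{\pm,\varepsilon}$ by expanding $f$ near $(0,1)$ to extract the $\pm\varepsilon/r^{2}$ sign, then rule out non-crossing by a contradiction argument that linearizes $g$ at $y=1$ using $g(1)=0$, $g_{y}(1)=-1/c$ together with the bounds from Propositions \ref{o(r)} and \ref{o(1)}, and finally invoke the comparison lemma. The only cosmetic difference is that you use first-order mean-value expansions with continuity of $f_{x}$ and $g_{y}$ where the paper writes out second-order Taylor remainders; both give the required inequalities.
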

\begin{proof}
    \begin{enumerate} 
        \item Taking sufficiently large $r$ as in part 2 of Proposition \ref{o(1)}, but using Eq. \ref{Taylor2} instead, we have
        \begin{align*}
         f\left(\frac{w_{+,\varepsilon}'}{1+w_{+,\varepsilon}^2},\frac{w_{+,\varepsilon}}{r}\right)&= f\left(\frac{1+\frac{c-\varepsilon}{r^2}}{1+(r-\frac{c-\varepsilon}{r})^2},1-\frac{c-\varepsilon}{r^2}\right)\\
         &\geq f\left(\frac{1}{1+r^2},1-\frac{c-\varepsilon}{r^2}\right)\\
            &\geq 1-\frac{c-\varepsilon}{r^2}+\frac{f_x(0,1)}{1+r^2}+\frac{1}{2(1-\frac{c-\varepsilon}{r^2})}f_{xx}(\xi,1)\frac{1}{(1+r^2)^2}\\
            &= 1+\frac{\varepsilon}{r^2}-\frac{c}{r^2(1+r^2)}+\frac{1}{2(1-\frac{c-\varepsilon}{r^2})}\frac{f_{xx}(\xi,1)}{(1+r^2)^2}\\
            &\geq 1+\frac{\varepsilon}{r^2}-O(r^{-4})\\
            &>1,\mbox{ for sufficiently large } r.
        \end{align*}
For the next part, let $r_0>0$ and we assume 
\begin{align*}
    v(r)>r-\frac{c-\epsilon}{r},\mbox{ for all }r>r_0.
\end{align*}
Then, by Proposition \ref{o(r)}, we have
    \begin{align*}
            v'&=(1+v^2)g\left(\frac{v}{r}\right)
               \\
              &\leq (1+r^2)g\left(1-\frac{c-\varepsilon}{r^2}\right),
    \end{align*}
since $g$ is decreasing. 
\\
We note that the second-order expansion of $g$ allows us to write 
    \begin{align*}    
              v&\leq(1+r^2)\left(g(1)-g_y(1)\frac{c-\epsilon}{r^2}+\frac{g_{yy}(\xi)}{2}\dfrac{c^2}{r^4}\right).
    \end{align*}          
Recall \eqref{f_y(0,1)},  $c=f_x(0,1)$ and $g(1)=0$, then we have $g_y(1)=-c^{-1}$ and
    \begin{align*}          
              v'&\leq(1+r^2)\left(\dfrac{c-\varepsilon}{cr^2}
              +\dfrac{g_{yy}(\xi)}{2}\dfrac{c^2}{r^4}\right)
              \\
              &=(1+r^2)\dfrac{c-\varepsilon}{cr^2}
              +O(r^{-2})
              \\
              &\leq \left(1-\dfrac{\varepsilon}{c}\right)+O(r^{-2}).
        \end{align*}
Therefore, we have obtained $v$ grows no more rapidly than $\left(1-\dfrac{\epsilon}{c}\right)r$, contradicting that $v(r)>r-\dfrac{c-\varepsilon}{r}$ for large enough $r>r_0$.

  \item Arguing as in the previous part, for sufficiently large $r$, we have
        \begin{align*}
         f\left(\frac{w_{-,\varepsilon}'}{1+w_{-,\varepsilon}^2},\frac{w_{-,\varepsilon}}{r}\right)&= f\left(\frac{1-\frac{c+\varepsilon}{r^2}}{1+(r-\frac{c+\varepsilon}{r})^2},1-\frac{c+\varepsilon}{r^2}\right)\\
         &\leq f\left(\frac{1}{1+r^2},1-\frac{c-\epsilon}{r^2}\right)
         \\
        &= 1-\frac{c+\varepsilon}{r^2}+\frac{f_x(0,1)}{1+r^2}+\frac{f_{xx}(\xi,1)}{2\left(1-\frac{c+\varepsilon}{r^2}\right)}\frac{1}{(1+r^2)^2}
            \\
            &= 1-\frac{\varepsilon}{r^2}-\frac{c}{r^2(1+r^2)}+O(r^{-4})
            \\
            &= 1-\frac{\varepsilon}{r^2}+O(r^{-4})
            \\
            &<1.
        \end{align*}

    For the next part, we fix $r_0>0$ and assume
    \begin{align*}
    v(r)<r-\frac{c+\varepsilon}{r},\mbox{ for all }r>r_0.
    \end{align*}
    Then, by part (2) of Proposition \ref{o(1)} for sufficiently large $r$, we have $v \geq r-1$ and hence,
        \begin{align*}
            v'&=(1+v^2)g\left(\frac{v}{r}\right)
            \\
             &\geq (1+(r-1)^2)g\left(1-\frac{c+\epsilon}{r^2}\right)
              \\
               &=(1+(r-1)^2)\left(g(1)-g_y\left(1\right)\frac{c+\varepsilon}{r^2}+\dfrac{g_{yy}(\xi)}{2}\left(\frac{c+\varepsilon}{r^2}\right)^2\right)
                \\
              &\geq (1+(r-1)^2)\left(\frac{c+\varepsilon}{cr^2}-O(r^{-4})\right)
               \\
              &\geq 1+\frac{\varepsilon}{c}+O(r^{-2}).
        \end{align*}
      Therefore, $v$ grows faster than $\left(1+\dfrac{\varepsilon}{c}\right)r+O(r^{-1})$, contradicting $v(r)<r-\dfrac{c+\epsilon}{r}$ for large enough $r>r_0$.   
        \end{enumerate}
\end{proof}

\begin{remark}
    Proposition \ref{o(r^-1)} implies  $v(r)=r-\dfrac{c}{r}+o(r^{-1})$ as $r\to\infty$. 
\end{remark}

\begin{remark}
    The results of Proposition \ref{o(r^-1)} were obtained independently in \cite{cogo2023rotational}.
\end{remark}

\begin{proposition}\label{o(r^-2)}
Let $c=f_x(0,1)$ and consider 
\begin{align*}
    w_{+,\varepsilon}(r)=r-\frac{c}{r}+\dfrac{\varepsilon}{r^2}\mbox{ and }w_{-,\varepsilon}(r)=r-\frac{c}{r}-\dfrac{\varepsilon}{r^2}.
\end{align*}
    \begin{enumerate}
        \item For every $\varepsilon \in(0,c)$, $w_{+,\varepsilon}(r)$ is a super-solution to Eq. \eqref{f-trans ODE f=1} for sufficiently large $r$. In addition, given any $r_0>0$, there exists $r_1>r_0$ such that  $v(r_1)\leq r_1-\dfrac{c}{r_1}+\dfrac{\varepsilon}{r_1^2}$. Thus, $v \leq r-\dfrac{c}{r}+\dfrac{\varepsilon}{r^2}$ for sufficiently large $r$.
        \item For every $\varepsilon>0$, $w_{-,\varepsilon}(r)$ is a sub-solution to Eq. \eqref{f-trans ODE f=1} for sufficiently large $r$. Moreover, given any $r_0>0$, there exists $r_1>r_0$ such that  $v(r_1)\geq r_1-\dfrac{c}{r_1}-\dfrac{\varepsilon}{r_1^2}$. Thus, $v \geq r-\dfrac{c}{r}-\dfrac{\varepsilon}{r^2}$ for sufficiently large $r$.
    \end{enumerate}
\end{proposition}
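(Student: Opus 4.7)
The strategy mirrors Proposition~\ref{o(r^-1)}, but carried out to one additional order in $1/r$. For the super-solution half of Part~1, I plug the ansatz $w_{+,\varepsilon}(r) = r - c/r + \varepsilon/r^2$ into the left-hand side of \eqref{eqn: fsubsol}. A direct computation gives $w_{+,\varepsilon}'/(1+w_{+,\varepsilon}^2) = 1/r^2 + O(r^{-4})$ and $w_{+,\varepsilon}/r = 1 - c/r^2 + \varepsilon/r^3$. Applying the second-order Taylor expansion \eqref{Taylor2} together with $f_x(0,1)=c$, the order $r^{-2}$ contributions cancel exactly (the term $-c/r^2$ from $y$ is killed by $c\cdot(1/r^2)$ from the linear correction), while the $f_{xx}$ remainder is $O(r^{-4})$. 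This leaves
\[ f\!\left(\frac{w_{+,\varepsilon}'}{1+w_{+,\varepsilon}^2},\,\frac{w_{+,\varepsilon}}{r}\right) = 1 + \frac{\varepsilon}{r^3} + O(r^{-4}) > 1 \]
for sufficiently large $r$. The sub-solution check in Part~2 is symmetric, yielding $1 - \varepsilon/r^3 + O(r^{-4}) < 1$.

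For the global bound in Part~1, I argue by contradiction: suppose $v(r) > r - c/r + \varepsilon/r^2$ for all $r > r_0$. The bound $v \leq r$ (from Proposition~\ref{o(r)}) gives $1+v^2 \leq 1+r^2$, and the monotonicity of $g$ combined with the second-order expansion of $g$ at $y=1$ (using $g(1)=0$ and $g_y(1) = -1/c$) yields
\[ v'(r) \;\leq\; (1+r^2)\,g\!\left(1 - \tfrac{c}{r^2} + \tfrac{\varepsilon}{r^3}\right) \;=\; 1 - \frac{\varepsilon}{cr} + O(r^{-2}). \]
Integrating from $r_0$ to $r$ produces $v(r) \leq r - (\varepsilon/c)\log r + C$, which for large $r$ is strictly less than $r - c/r + \varepsilon/r^2$ since $\log r$ dominates $1/r$. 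This contradicts the standing assumption. Part~2 runs analogously: assuming $v(r) < r - c/r - \varepsilon/r^2$, the lower bound $v \geq r - O(1/r)$ from Proposition~\ref{o(r^-1)} gives $1+v^2 \geq r^2 - O(1)$, and one derives $v'(r) \geq 1 + \varepsilon/(cr) + O(r^{-2})$, integrating to $v(r) \geq r + (\varepsilon/c)\log r - C'$, contradicting the assumed upper bound.

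The main obstacle is purely bookkeeping. Every Taylor remainder must be tracked so that the error terms in the bound for $v'$ are genuinely $O(r^{-2})$—hence integrate to a bounded additive constant—while the leading $\pm\varepsilon/(cr)$ correction produces a logarithmic term upon integration. That logarithm is what wins against the $O(1/r^2)$ gap in the ansatz and drives the contradiction; any sloppiness in the remainder estimates could mask it. The uniform boundedness of $f_{xx}$ and $g_{yy}$ on the relevant compact intervals (the \emph{estimation trick} of Section~\ref{sec:TODE}) is what justifies each $O$-estimate uniformly for $r$ large.
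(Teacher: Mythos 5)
Your proposal is correct and follows essentially the same route as the paper: verify the super/sub-solution property via the second-order Taylor expansion \eqref{Taylor2} of $f$ (the $r^{-2}$ terms cancel because $f_x(0,1)=c$, leaving $1\pm\varepsilon/r^3+O(r^{-4})$), and rule out the complementary inequality by bounding $v'$ through the second-order expansion of $g$ at $y=1$ with $g_y(1)=-1/c$. Your explicit integration to $v\lessgtr r\mp(\varepsilon/c)\log r+C$ merely spells out the contradiction that the paper leaves implicit.
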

\begin{proof}
    \begin{enumerate}

        \item Indeed, as in the previous proof for sufficiently large $r$, we have
        \begin{align*}
            f\left(\frac{w_{+,\varepsilon}'}{1+w_{+,\varepsilon}^2},\frac{w_{+,\varepsilon}}{r}\right)&=  f\left(\frac{1+\frac{c}{r^2}-\dfrac{2\varepsilon}{r^3}}{1+\left(r-\dfrac{c}{r}+\dfrac{\varepsilon}{r^3}\right)^2},1-\dfrac{c}{r^2}+\dfrac{\varepsilon}{r^3}\right)
            \\
            &\geq f\left(\dfrac{1+\dfrac{c}{r^2}-\dfrac{2\varepsilon}{r^3}}{1+r^2},1-\dfrac{c}{r^2}+\dfrac{\varepsilon}{r^3}\right)
            \\
            &\geq 1-\dfrac{c}{r^2}+\dfrac{\varepsilon}{r^3}+\dfrac{c}{1+r^2}\left(1+\dfrac{c}{r^2}-\dfrac{2\varepsilon}{r^3}\right)+\dfrac{f_{xx}(\xi,1)}{2}\dfrac{\left(1+\dfrac{c}{r^2}-\dfrac{2\varepsilon}{r^3}\right)^2}{\left(1+\dfrac{c}{r^2}-\dfrac{\varepsilon}{r^3}\right)(1+r^2)^2}
            \\
            &= 1+\frac{\varepsilon}{r^3}-O(r^{-4})
            \\
            &\geq 1. 
        \end{align*}

        
        For the next part, fix $r_0>0$ and suppose $v(r)>r-\dfrac{c}{r}+\dfrac{\varepsilon}{r^2}$ for all $r>r_0$. Then, by recalling that $g(1)=0$, $g_y(1)=-c^{-1}$, we obtain
        \begin{align*}
            v'&=(1+v^2)g\left(\frac{v}{r}\right)
            \\
            &\leq (1+r^2)g\left(1-\frac{c}{r^2}+\frac{\varepsilon}{r^3}\right)
            \\
            &\leq (1+r)^2\left(g(1)-g_y(1)\left(\frac{c}{r^2}-\frac{\varepsilon}{r^3}\right)+\dfrac{g_{yy}(\xi)}{2}\left(\frac{c}{r^2}-\frac{\varepsilon}{r^3}\right)^2\right)
            \\
              &\leq 1-\frac{\varepsilon}{cr}+O(r^{-2}).
        \end{align*}
     However, this contradicts $v(r)>r-\dfrac{c}{r}+\dfrac{\varepsilon}{r^2}$ for all $r>r_0$.

    \item As in part one for sufficiently large $r$, we have
        \begin{align*}
         f\left(\frac{w_{-,\varepsilon}'}{1+w_{-,\varepsilon}^2},\frac{w_{-,\varepsilon}}{r}\right)&=  f\left(\frac{1+\dfrac{c}{r^2}+\dfrac{2\varepsilon}{r^3}}{1+\left(r-\dfrac{c}{r}-\dfrac{\varepsilon}{r^2}\right)^2},1-\frac{c}{r^2}-\frac{\varepsilon}{r^3}\right)\\
         &\leq 1-\frac{\varepsilon}{r^3}+O(r^{-4})\\
         &\leq 1.
        \end{align*}
    On the other hand, fix $r_0>0$ and we assume that $v(r)<r-\dfrac{c}{r}-\dfrac{\varepsilon}{r^2}$ for all $r>r_0$. Then, by Part 2 of Proposition \ref{o(r^-1)} for sufficiently large $r$, we have $v>r-\dfrac{2c}{r}$ and hence,
        \begin{align*}
            v'&= \left(1+v^2\right)g\left(\frac{v}{r}\right)
            \\
            v'&\geq \left(1+\left(r-\frac{2c}{r}\right)^2\right)g\left(1-\frac{c}{r^2}-\frac{\varepsilon}{r^3}\right)
            \\
            &\geq 1+\frac{\varepsilon}{cr}-O(r^{-2}).
        \end{align*}
    However, this contradics $v(r)<r-\dfrac{c}{r}-\dfrac{\varepsilon}{r^2}$ for large enough $r>r_0$.        
        \end{enumerate}
\end{proof}

\begin{remark}
In the above proposition we have proven Theorem \ref{T1}, $v(r)=r-\dfrac{c}{r}+o(r^{-2})$.
\end{remark}

\begin{remark}
By following this process further with the speed functions $f=Q_{k+1,k}=\dfrac{S_{k+1}}{S_k}$, we obtained the following expression for the ``bowl''-type solution
\begin{align*}
v(r)=r-\dfrac{c}{r}+\dfrac{n^2(n-k-1)(n-k-4)}{(k+1)^3(n-k)^2}\dfrac{1}{r^3}+O(r^{-5}).
\end{align*}
Note that when $k=0$, $Q_{k+1,k}=H$ and this expression coincide with the one obtained in \cite{CSS}. 

\end{remark}

\section{Uniqueness of entire solutions}\label{sec: Uniqueness}\,
With the fine asymptotic information about the bowl-type solitons that we have obtained so far, we can now demonstrate the uniqueness result for entire strictly convex solutions of \eqref{f-trans} that are smoothly asymptotic to the ``bowl''-type solution for nondegenerate speed $f$. 
\newline

To this end, we will recall a tangential principle for $f$-translators in $\mathbb{R}^{n+1}$ developed by the second author in \cite{jtsmaximumprinciple}.

\begin{theorem}\label{T2}
	Let $\Sigma_1,\Sigma_2\subset\mathbb{R}^{n+1}$ be two complete, embedded, connected  $f$-translators  such that
	\begin{enumerate}
		\item $f:\Gamma\to(0,\infty)$ satisfies properties \ref{a}-\ref{d}.
		\item $\Sigma_1$ is strictly convex, i.e: the principal curvatures $\lambda\in\Gamma_+$.
		\item $\Sigma_2$ is convex, i.e: the principal curvatures $\lambda\in\overline{\Gamma}_+$ .
	\end{enumerate}
	Then,
	\begin{enumerate}
		\item Assume that there exists an interior point $p\in \Sigma_1\cap \Sigma_2$ such that the tangent spaces coincide at $p$. If $\Sigma_1$ lies at one side of $\Sigma_2$, then both hypersurfaces coincide.
		
		\item Assume that the boundaries $\partial \Sigma_i$ lie in the same hyperplane $\Pi$ and the intersection of $\Sigma_i$ with $\Pi$ is transversal. If $\Sigma_1$ lies at one side of $\Sigma_2$ and there exist $p\in \partial \Sigma_1\cap \partial \Sigma_2$ such that the tangent spaces to $\Sigma_i$ and $\partial\Sigma_i$ coincide, then both hypersurfaces coincide. 
	\end{enumerate}
\end{theorem}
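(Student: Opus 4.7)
The plan is to prove both parts by reducing locally to a linear elliptic comparison and then invoking Hopf's strong maximum principle in the interior case and Hopf's boundary point lemma in the boundary case, with a connectedness argument to globalize. Concretely, at the touching point $p$ I would choose Euclidean coordinates so that $p=0$, the common tangent plane is $\R^n\times\{0\}$, and the common inward normal is $e_{n+1}$; then represent each $\Sigma_i$ locally as a graph $x_{n+1}=u_i(x)$ with $u_i(0)=0$, $Du_i(0)=0$, and (without loss of generality) $u_1\geq u_2$. The translator equation \eqref{f-trans} becomes a fully nonlinear second-order PDE $F(D^2 u, Du)=0$, where property \ref{c} makes $F$ elliptic whenever the principal curvatures of the graph lie in $\Gamma$.

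Writing $F(D^2 u_1, Du_1)-F(D^2 u_2, Du_2)=0$ and applying the fundamental theorem of calculus along the segment joining $(D^2 u_2, Du_2)$ to $(D^2 u_1, Du_1)$ produces a linear elliptic equation
\begin{equation*}
a^{ij}(x)\,D_{ij} w + b^i(x)\,D_i w + c(x)\,w = 0
\end{equation*}
for $w:=u_1-u_2\geq 0$, with coefficients given by averaged partial derivatives of $F$. Since $\Sigma_1$ is strictly convex, $(D^2u_1(0),Du_1(0))$ lies in the open region where $F$ is elliptic; by continuity on a small neighborhood, the averaged matrix $(a^{ij})$ remains uniformly positive definite. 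The strong maximum principle applied to $w\geq 0$ with $w(0)=0$ then yields $w\equiv 0$ on a neighborhood of the origin. Since the set of points of $\Sigma_1$ where $\Sigma_1$ and $\Sigma_2$ agree locally is open (by the argument just made), closed (by continuity of the tangent spaces and second fundamental forms), and nonempty, connectedness of $\Sigma_1$ together with embeddedness and completeness of $\Sigma_2$ forces $\Sigma_1=\Sigma_2$. This proves part 1.

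For part 2, the same local graph setup is carried out at the boundary point, with the added feature that $\Omega$ has a smooth boundary arc $\gamma\subset\Pi$ through $0$, guaranteed by the transversality of $\Sigma_i\cap\Pi$. The hypothesis that the tangent spaces to both $\Sigma_i$ and $\partial\Sigma_i$ coincide at $p$ translates to $w(0)=0$, $Dw(0)=0$ on the full $\R^n$ (tangential and inward-normal directions to $\gamma$). Hopf's boundary point lemma, valid thanks to the ellipticity established above, forbids $w$ from having a strictly positive inward-normal derivative at $0$ unless $w\equiv 0$ locally; the same open-closed connectedness argument then gives global coincidence. The main obstacle is guaranteeing ellipticity of $(a^{ij})$ along the entire interpolating segment when $\Sigma_2$ is only weakly convex, since $(D^2 u_2, Du_2)$ may correspond to curvatures on $\partial\Gamma_+$; this is handled by shrinking the neighborhood of $0$ so that continuity of the linearization, anchored at the strictly elliptic endpoint coming from $\Sigma_1$, dominates, with properties \ref{c} and \ref{d} providing the quantitative monotonicity and homogeneity needed for the estimate.
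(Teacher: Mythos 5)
Your approach---representing both hypersurfaces as graphs over the common tangent plane, linearizing the difference $w=u_1-u_2$ via the fundamental theorem of calculus along the convex combination of the two jets, and then applying Hopf's strong maximum principle in the interior case and the boundary point lemma in the boundary case, followed by an open--closed connectedness argument---is exactly the method this paper relies on: the paper does not prove Theorem \ref{T2} itself but defers to \cite{jtsmaximumprinciple} (Theorem 1.4), and the remark immediately following describes precisely this convex-combination-plus-Hopf argument. Your sketch is therefore consistent with the intended proof; the only genuinely delicate point, which you correctly flag but treat briefly, is the admissibility and uniform ellipticity of the averaged coefficients when the curvatures of $\Sigma_2$ lie on $\partial\Gamma_+$, which is where the cited reference (following Fontenele--Silva) does the real work.
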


\begin{proof}
See \cite{jtsmaximumprinciple} Theorem 1.4 for a proof and an additional remark.  
\end{proof}

\begin{remark}\label{Tangency Principle}
We would like to highlight that the hypotheses of the Tangency Principle can be modified by the followings:
	\begin{enumerate}
 \item  $\Gamma\subset\R^n$ is the connected component of $\set{\lambda\in\R^{n}:\gamma(\lambda)>0}$ containig $\Gamma_+$ and it is a convex cone.
		\item $f:\Gamma\to \R$ is smooth and symmetric and satisfies properties \ref{c}-\ref{d}.
         \item The principal curvatures of $\Sigma_1$ lies in $\Gamma$ and the principal curvatures of $\Sigma_2$ lies in $\partial\Gamma\cap\set{\frac{\partial f}{\partial\lambda_i}>0}$.
	\end{enumerate}
The reason of this is that the proof is based on a convex combination argument of the local graphs near a tangency point of the hypersurfaces. An in particular, by shrinking the domain if it necessary, the principal curvatures of the convex combination is an admissible family for the functional $f(\lambda)-\pI{\nu,e_{n+1}}$, where a Hopf's maximum principle holds. We refer the reader to Thm. 1.1 in \cite{fontenele2001tangency} for details. 
\end{remark}

\begin{definition}
Let $\Sigma$ be an entire translating graphs that satisfies Equation \eqref{f-flow}. Then, the end of $\Sigma$ is \emph{smoothly asymptotic}
to the ``bowl''-type soliton if $\Sigma$ can be expressed outside a ball as a vertical graph of a function $u_{\Sigma}$ such that
\begin{align}\label{smoothly asym. to the bowl}
    u_{\Sigma}(x)=\dfrac{|x|^2}{2}-\frac{c}{2}\ln(|x|^2)+O\left(|x|^{-1}\right),\mbox{ as }|x|\to\infty.
\end{align}
Note that the normalization $f(0,1)=1$ is used and $c 
\coloneqq \dfrac{\partial f}{\partial \lambda_1}(0,1)$.
\end{definition}

\begin{remark}\label{rem rot. inv}
As is noted in \cite{jtsmaximumprinciple} (see sec. 4), Equation \eqref{f-trans} is invariant under rotation fields that fix the $x_{n+1}$-axis in $\R^{n+1}$. Consequently, under the hypothesis of Theorem \ref{T3}, it is enough to show that $\Sigma$ is symmetric along the plane $\set{x_1=0}$ to obtain that $\Sigma$ is rotationally symmetric. 
\end{remark}

We will now use Alexandrov's moving plane method to establish uniqueness. To this end, we recall the following definitions used in \cite{MHS} and \cite{martinez2022equilibrium}:

\begin{itemize}
    \item  $\Pi_t \coloneqq \set{x\in\R^{n+1}:\textbf{p}(x)=t}$, where $\textbf{p}(x_1,\ldots,x_{n+1})=x_1$ is the projection onto the first coordinate. In addition, $\Pi \coloneqq \Pi_0$.
    \item $Z_t \coloneqq \set{x_{n+1}>t}$.
    \item Let $A \subset \R^{n+1}$ be an arbitrary subset. Then we set $A_+(t) \coloneqq \set{x\in A: \textbf{p}(x)\geq t}$, $A_{-}(t) \coloneqq \set{x\in A: \textbf{p}(x)\leq t}$ and $\delta_t(A):=A\cap\Pi_t$. Note that $A_+(t)$ and $A_-(t)$ are the right hand side and the left hand side, respectively, along $\Pi_t$ of $A$ (see Definition \ref{Def Order} given below).   
	\item The $1$-parameter families of right (respectively, left) reflections of $A$, respectively, along the hyperplane $\Pi_t$  are given by $A_{+}^{*}(t):=\set{(2t-x_1,x_2,\ldots,x_{n+1})\in\R^{n+1}:(x_1,\ldots,x_{n+1})\in A_+(t)}$ (respectively, $A_{-}^{*}(t) := \set{(2t-x_1,x_2,\ldots,x_{n+1})\in\R^{n+1}:(x_1,\ldots,x_{n+1})\in A_{-}(t)}$).
	\item We denote by $\pi:\R^{n+1}\to \Pi_0 \subset\R^{n+1}$ the orthogonal projection given by $\pi(x_1,\ldots,x_{n+1})=(0,x_2,\ldots,x_{n+1})$.
\end{itemize}

\begin{definition}\label{Def Order}
Let $A,B$ be two subsets of $\R^{n+1}$. Then, it will be said  ``$A$ is on the right side of $B$'’ (denoted by $B\leq A$) if, and only if, for every $x\in \Pi=\set{x_1=0}$ such that
	\begin{align*}
		\pi^{-1}(\set{x})\cap A\neq \emptyset \mbox{ and }	\pi^{-1}(\set{x})\cap B \neq\emptyset,
	\end{align*}
	we have that 
	\begin{align}\label{def order}	
		\sup\set{\mathbf{p}(p): p\in \pi^{-1}(\set{x})\cap B}\leq \inf\set{\mathbf{p}(p): p\in\pi^{-1}(\set{x})\cap A}.
	\end{align}
	
\end{definition}

\begin{remark}
	The proof of Theorem \ref{T3} uses the method of moving planes of Alexandrov in the spirit of \cite{schoen1983uniqueness}, \cite{MHS} and \cite{martinez2022equilibrium}. This method specifically requires three properties to be applied: 
 \begin{itemize}
     \item  A family of hyperplanes as translating solutions of Equation \eqref{f-trans}. To accomplish this, the Property \ref{e} on $f$ is needed.
     \item Tangential principles in the interior and at the boundary, since we will need to understand how the translators intersect tangentially with their reflections along $\Pi_t$.
     \item ``Enough space'' to start the method. This means that we can find $t>0$ large enough such that the reflection of $\Sigma$ along $\Pi_t$ is a graph over $\Pi$ and $\Sigma_-(t)\leq\Sigma^*_+(t)$. To compare the horizontal distance between $\Sigma_+^*$ and $\Sigma_-$, we will need the asymptotic behavior of the ``bowl''-type solution at infinity.
 \end{itemize}
\end{remark}

Next, we will follow the arguments used in \cite{martinez2022equilibrium} theorem 6, for $f$-translators which are smoothly asymptotic to the ``bowl''-type solution. 

\begin{lemma}\label{lem1}
    There exists $r_0>R$ such that $\Sigma_+(t)$ is a graph over $\Pi$ for every $t>r_0$. 
\end{lemma}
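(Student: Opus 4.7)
The plan is to exploit the smooth asymptotic expansion from Theorem \ref{T1} to show that the $x_1$-derivative of the graphing function of $\Sigma$ is strictly positive throughout $\{x_1\geq t\}$ for $t$ large, which yields the desired graphicality of $\Sigma_+(t)$ over $\Pi$. Since $\Sigma$ is strictly convex, complete, and has its only end smoothly asymptotic to the (entire) bowl, $\Sigma$ is itself an entire graph: $\Sigma=\{(x,u(x)):x\in\R^n\}$ for some smooth strictly convex $u=u_\Sigma$.

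Writing $y=(x_2,\ldots,x_n)\in\R^{n-1}$, graphicality of $\Sigma_+(t)$ over $\Pi$ is equivalent to the map $s\mapsto u(s,y)$ being injective on $[t,\infty)$ for every $y$. By strict convexity of $u$, $\partial_{11}u>0$, so $\partial_1 u(\cdot,y)$ is strictly increasing; hence it is enough to verify $\partial_1 u(t,y)>0$ uniformly in $y$, whereupon $u(\cdot,y)$ is strictly increasing on $[t,\infty)$ and the required injectivity follows at once.

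The smoothness of the asymptotic expansion \eqref{smoothly asym. to the bowl} permits differentiation, yielding
\[\partial_1 u(x)=x_1-\frac{c\,x_1}{|x|^2}+O(|x|^{-2})\quad\text{as}\quad |x|\to\infty.\]
At $x_1=t$ one has $|x|^2=t^2+|y|^2\geq t^2$, whence
\[\partial_1 u(t,y)\geq t-\frac{c}{t}+O(t^{-2}),\]
with implicit constants independent of $y$ because the asymptotic expansion holds uniformly in the exterior region $\{|x|\geq R\}$. Choosing $r_0>R$ large enough that the right-hand side above is strictly positive will finish the proof.

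The only subtle point I anticipate is justifying uniformity in $y$ of the error term, but this is automatic: for $t\geq R$ every $x$ with $x_1=t$ lies in $\{|x|\geq R\}$, so a single $y$-independent bound governs the error throughout the slice. I do not expect any genuine obstacle beyond careful bookkeeping of the differentiated asymptotic.
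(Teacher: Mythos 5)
Your proof is correct and follows essentially the same route as the paper: both differentiate the asymptotic expansion \eqref{smoothly asym. to the bowl} to conclude $\partial_1 u_\Sigma>0$ on $\{x_1\geq r_0\}$ for $r_0$ large, which gives graphicality over $\Pi$. The only cosmetic differences are that you establish positivity on the slice $\{x_1=t\}$ and propagate it via $\partial_{11}u>0$, while the paper bounds $d_xu_\Sigma(e_1)$ directly on the whole region, and the paper prepends a tangency-principle argument ruling out compact components of $\Sigma_+(t)$, which in your formulation is automatic since $\Sigma_+(t)$ is the vertical graph of $u_\Sigma$ over a connected half-space.
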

\begin{proof}
    Firstly, we note that for every $t>R$, $\Sigma_+(t)$ possess only one unbounded connected component. If this were false, one could choose a compact component $\Sigma'\subset\Sigma_+(t)$ and a $t$ large enough such that $\Sigma\cap\Pi_t=\emptyset$. Then, by translating $\Pi_t$ until it touches $\Sigma'$ at a first order contact point\footnote{First, this is possible because the first-order contact point coincides with the point at which the two hypersurfaces first touch. In addition, the first order contact point means that the graph and gradient coincide. In particular, the tangent planes coincide because the unit normal vectors of the two hypersurfaces coincide at this point.}, say $\Sigma'\cap\Pi_{t'}=\set{p}$ for some $0<t'<t$, Theorem \ref{T2}, applied to $\Sigma'$ with $\Pi_{t'}$, implies that $\Sigma'$ is totally geodesic contradicting that $\Sigma'$ is strictly convex.
\newline

    Next, by Equation \eqref{smoothly asym. to the bowl}, we have 
    \begin{align*}
    d_xu_{\Sigma}(e_1)\geq\left(1-\dfrac{C}{|x|^2}\right)\pI{x,e_1},
    \end{align*}
    for some constant $C>0$ and $|x|\geq R$. Therefore, by choosing $r_0>R$ large enough such that $1-\dfrac{C}{|x|^2}\geq \varepsilon>0$, it follows that $d_xu_{\Sigma}(e_1)>0$  whenever $\pI{x,e_1}\geq r_0$. Finally, Lemma \ref{lem1} follows since $\Sigma$ is properly embedded and $\Sigma_+(r_1)\cup\pi\left(\Sigma_+(r_1)\right)$ bounds a domain in $\R^{n+1}$.
\end{proof}

From Lemma \ref{lem1}, it follows that $\Sigma^*_+(t)\cap Z_R$ is a vertical graph of a function satisfying 
\begin{align*}
u^*_\Sigma(x)=u_{\Sigma}(2t-x_1,x_2,\ldots,x_n) 
\end{align*}
for any $t>r_0$ .

\begin{lemma}\label{lem2}
Let $a>0$. There exists $r_1>r_0$ such that for $|x|\geq r_1$ and $t>a+x_1$ we have
\begin{align*}
    u^*_{\Sigma}(x)-u_\Sigma(x)\geq \epsilon,
\end{align*}
for some $\epsilon>0$. 
\end{lemma}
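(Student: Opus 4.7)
The plan is to insert the asymptotic expansion \eqref{smoothly asym. to the bowl} directly into the difference $u^*_\Sigma(x) - u_\Sigma(x)$, where $x^* = (2t - x_1, x_2, \ldots, x_n)$, and show that the quadratic piece dominates the logarithmic correction uniformly on the prescribed range. The key algebraic identity is
\[|x^*|^2 - |x|^2 = (2t - x_1)^2 - x_1^2 = 4t(t - x_1).\]
Combined with the hypothesis $t > a + x_1$ (giving $t - x_1 > a$) and the implicit requirement $t > r_0$ coming from Lemma \ref{lem1} (so that $u^*_\Sigma$ is defined), this yields the crucial lower bound $|x^*|^2 - |x|^2 > 4 r_0 a > 0$.

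Substituting the asymptotic expansion and absorbing $O(|x^*|^{-1})$ into $O(|x|^{-1})$ using $|x^*| \geq |x|$, I would write
\[u^*_\Sigma(x) - u_\Sigma(x) = 2t(t-x_1) - \frac{c}{2}\ln\!\left(1 + \frac{4t(t-x_1)}{|x|^2}\right) + O(|x|^{-1}).\]
To control the logarithmic term uniformly (since the ratio $4t(t-x_1)/|x|^2$ could a priori be either very small or very large), I would apply the elementary inequality $\ln(1+u) \leq u$ for $u \geq 0$, which gives
\[u^*_\Sigma(x) - u_\Sigma(x) \geq 2t(t-x_1)\!\left(1 - \frac{c}{|x|^2}\right) + O(|x|^{-1}).\]

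Finally, by enlarging $r_1 > r_0$ so that $c/|x|^2 \leq 1/2$ for $|x| \geq r_1$, the parenthesized factor is at least $1/2$, hence $u^*_\Sigma(x) - u_\Sigma(x) \geq t(t-x_1) + O(|x|^{-1}) \geq r_0 a + O(|x|^{-1})$. A further enlargement of $r_1$ so that the error term is bounded in absolute value by $r_0 a / 2$ for $|x| \geq r_1$ gives the uniform estimate $u^*_\Sigma(x) - u_\Sigma(x) \geq r_0 a / 2$, and the lemma follows with $\epsilon := r_0 a / 2$. The argument is essentially a direct calculation; the only nontrivial point is the use of $\ln(1+u) \leq u$ to absorb the logarithmic contribution into the quadratic one, since $t$ and $|x|$ are not individually bounded and the naive expansion $\ln(1+u) \approx u - u^2/2 + \ldots$ fails in the regime where $u$ is large.
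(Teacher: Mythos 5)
Your argument is correct and is essentially the computation that the paper itself outsources with the one-line citation to \cite{MHS} (Step 3 of Theorem A): reflect, use $|x^*|^2-|x|^2=4t(t-x_1)>4r_0a$, and show the quadratic gain dominates the logarithmic and $O(|x|^{-1})$ corrections. Your explicit handling of the log term via $\ln(1+u)\leq u$ and your observation that $|x^*|\geq |x|$ lets the two error terms be absorbed into a single $O(|x|^{-1})$ are exactly the details the reference supplies, so this is a faithful filling-in rather than a different route.
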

\begin{proof}
The proof follows the same method as in \cite{MHS} Step 3 Theorem A. 
%
%
\end{proof}
From Lemma \ref{lem2} we have that for every $a>0$ and $t\geq r_1$ 
\begin{align*}
\Sigma_-(t+a)\cap\set{x_1\leq t}\leq \Sigma_+^*(t+a)\cap\set{x_1\leq t}. 
\end{align*}
Moreover, since $\Sigma_+(t)$ is a graph over $\Pi$ for $t\geq r_1$, it follows that 
\begin{align*}
\Sigma_-(t+a)\cap\set{t\leq x_1\leq t+a}\leq \Sigma_+^*(t+a)\cap\set{t\leq x_1\leq t+a}. 
\end{align*}
In conclusion, by setting
\begin{align*}
    \mathcal{A} \coloneqq \set{t\in [0,\infty): \Sigma_+(t) \mbox{ is a graph over }\Pi\mbox{ and }\Sigma_-(t)\leq \Sigma_+^*(t)},
\end{align*}
we see that $\mathcal{A}\neq\emptyset$ since $t+a\in\mathcal{A}$ for $t\geq r_1$. 

\begin{proof}[Proof of Theorem \ref{T3}]
Firstly, by the Remark \ref{rem rot. inv} it is only necessary to prove that $\Sigma$ is symmetrical about the hyperplane $\Pi=\set{x_1=0}$. Moreover, by the lemmas \ref{lem1} and \ref{lem2}, the set 
\begin{align*}
    \mathcal{A}=\set{t\in [0,\infty): \Sigma_+(t) \mbox{ is a graph over }\Pi\mbox{ and }\Sigma_-(t)\leq \Sigma_+^*(t)},
\end{align*}
is not empty. Then, we apply the same arguments as in \cite{MHS} to obtain $\mathcal{A}$ is closed and open subset of $[0,\infty)$. Consequently, $\mathcal{A}=[0,\infty)$, which means $0\in \mathcal{A}$. In fact, we have obtained $\Sigma_-(0)\leq \Sigma_+^*(0)$, and by analogous arguments, it can be shown that $ \Sigma_{-}^*(0)\leq \Sigma_+(0)$. Note that the combination of these two properties implies that $\Sigma$ is symmetric with respect the hyperplane $\Pi$. 
\end{proof}

\section{A Degenerate Example: $\sqrt[n]{S_n}$.}\label{Sn}
As seen in the previous section, the asymptotic behavior of the ``bowl''-type solution is that of a paraboloid when the velocity function is nondegenerate. 
\newline

In this section we will show that the situation can be very different for nondegenerate speeds, taking as an example the function $\sqrt[n]{S_n}$. 
\begin{theorem}\label{Asymp S_n slope}
The slope of the  ``bowl''-type $\sqrt[n]{S_n}$-translator is given by 
\begin{align*}
   v(r)=\begin{cases}
        e^\frac{r^2}{2}+O\left(\sqrt{e^\frac{r^2}{2}-1}\right),&\mbox{ for }n=2,
        \\
        \dfrac{r^3}{3}+O(1),&\mbox{ for }n=3,
        \\
        \left(\frac{n-2}{n}\right)^\frac{1}{n-2}r^{\frac{n}{n-2}}+O\left(\dfrac{1}{r^\frac{n}{n-2}}\right),&\mbox{ for }n\geq4. 
    \end{cases},\mbox{ as }r\to\infty
\end{align*}
\end{theorem}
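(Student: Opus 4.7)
The plan is to reduce the theorem to an explicit first-order ODE for the slope $v$ and then apply the sub/super-solution bootstrap of Section~\ref{Asym}. Specializing the example formulas for $f=\sqrt[k]{S_k}$ to $k=n$ yields $g(y)=y^{1-n}$, so the slope equation \eqref{f-trans ODE g} becomes
\[
v'(r) \;=\; (1+v^2(r))\,(v(r)/r)^{1-n} \;=\; \frac{(1+v^2(r))\,r^{n-1}}{v^{n-1}(r)}, \qquad v(0)=0.
\]
Power-matching the ansatz $v \sim c\, r^\alpha$ against the dominant term $r^{n-1}/v^{n-3}$ (from the $v^2$ piece of $1+v^2$) forces $\alpha = n/(n-2)$ and $c = A \coloneqq ((n-2)/n)^{1/(n-2)}$ via the identity $A\alpha = A^{3-n}$. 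The case $n=2$ is special: this balance degenerates, and the equation must be integrated directly.

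For $n=2$ the equation $v' = (1+v^2)r/v$ is separable. Integrating $v\,dv/(1+v^2) = r\,dr$ with $v(0)=0$ gives the closed form $v(r) = \sqrt{e^{r^2}-1}$, and the identity
\[
v(r) - e^{r^2/2} \;=\; \frac{-1}{\sqrt{e^{r^2}-1} + e^{r^2/2}}
\]
shows the error is exponentially small and in particular $O(\sqrt{e^{r^2/2}-1})$.

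For $n \geq 3$ I would run a two-stage bootstrap in the spirit of Propositions~\ref{o(r)}--\ref{o(r^-2)}. Stage one: verify that $w_{\pm,\varepsilon} = (A\pm\varepsilon)\,r^\alpha$ are super/sub-solutions for large $r$, using $A\alpha = A^{3-n}$ and the fact that the extra contribution $r^{n-1}/v^{n-1} \sim r^{-2(n-1)/(n-2)}$ is of strictly lower order; a contradiction argument parallel to that of Proposition~\ref{o(r)} then upgrades this to $v(r) = A\,r^\alpha + o(r^\alpha)$. Stage two: set $v = A\,r^\alpha + h$, expand the RHS to first order in $\eta = h/(A r^\alpha)$, and obtain the linearised error equation
\[
h'(r) + (n-3)\alpha\,\frac{h(r)}{r} \;=\; A^{1-n}\,r^{-2(n-1)/(n-2)} + (\text{smaller terms}).
\]
For $n=3$ the coefficient $(n-3)\alpha$ vanishes, the right-hand side integrates directly to $-3/r^{3} + C + o(r^{-3})$, and hence $h = O(1)$. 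For $n\geq 5$ the homogeneous decay $r^{-(n-3)\alpha}$ is strictly faster than $r^{-\alpha}$, while the particular solution $\sim A^{1-n}\,((n-4)\alpha)^{-1}\, r^{-\alpha}$ produces the claimed $O(r^{-\alpha})$ bound. These heuristics are made rigorous by feeding candidates of the form $A r^\alpha + c_\pm r^{-\alpha} \pm \varepsilon\, r^{-\alpha}$ into the super/sub-solution comparison after Taylor-expanding the RHS around the leading profile.

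The main obstacle is the careful bookkeeping in the second stage: the exponents in $(1+v^2) r^{n-1}/v^{n-1}$ depend on $n$ in several competing ways, and one must check that each ``smaller'' term genuinely lies below the targeted $r^{-\alpha}$ correction. The exceptional value $n=4$ is a resonance of the linearised equation---with $(n-4)\alpha = 0$---which produces a genuine $\ln(r)/r^2$ term in the particular solution; the stated $O(r^{-\alpha})$ bound should then be understood to absorb this logarithmic factor, or the sub/super-solutions must be augmented with a $\ln(r)/r^{2}$ correction to close the estimate cleanly.
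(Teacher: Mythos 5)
Your proposal is essentially sound and, for $n=2$ and $n\geq 4$, follows the same route as the paper: explicit integration of the separable equation for $n=2$, and a sub/super-solution bootstrap around the profile $A r^{n/(n-2)}$ with $A=((n-2)/n)^{1/(n-2)}$ for higher $n$. Two points of genuine divergence are worth recording. First, for $n=3$ the paper does not bootstrap at all: it integrates the equation exactly to the implicit relation $v-\arctan(v)=r^3/3$ and inverts the asymptotic expansion of $x-\arctan(x)$, obtaining the sharper statement $v=r^3/3+\pi/2-3r^{-3}+\ldots$; your linearised equation $h'=A^{-2}r^{-4}+\ldots$ recovers the same $O(1)$ bound (and the same $-3r^{-3}$ correction) but needs the extra work of justifying the linearisation, whereas the exact first integral $\frac{v^{n-2}}{n-2}-\int_0^v\frac{t^{n-3}}{1+t^2}\,dt=\frac{r^n}{n}$ is available for every $n\geq3$ and could in principle replace the bootstrap entirely. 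Second, your $n=2$ computation $v=\sqrt{e^{r^2}-1}$ is the correct one; the paper's displayed closed form $\sqrt{e^{r^2/2}-1}$ is a typo, incompatible with both the ODE and the stated asymptotics.

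More importantly, your identification of $n=4$ as a resonance is not a technicality to be waved away but a genuine error in the theorem as stated, and the paper's Step 5 does not survive it. The exact first integral for $n=4$ reads $v^2-\ln(1+v^2)=r^4/2$, which gives $v=\tfrac{r^2}{\sqrt2}+\tfrac{4\ln r-\ln 2}{\sqrt2\,r^2}+o(r^{-2}\ln r)$; the error term is genuinely of order $\ln(r)/r^2$ and is \emph{not} $O(r^{-2})$, so your first suggested reading ("the $O(r^{-\alpha})$ bound absorbs the logarithm") is untenable and your second one (augment the barriers with a $\ln(r)/r^{2}$ term, or restate the theorem with that term for $n=4$) is the correct fix. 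Your linearisation also pins down the right constant $B=A^{-1}/(n-4)$ in the $r^{-\alpha}$ correction for $n\geq5$, which agrees with what the first integral gives and which the explicit constants claimed in the paper's Steps 4--5 do not reproduce for $n\geq 6$; the $O(r^{-\alpha})$ conclusion is nevertheless correct there. The only part of your write-up that still needs to be executed rather than sketched is the "careful bookkeeping" you acknowledge: the comparison functions $Ar^\alpha+c_\pm r^{-\alpha}\pm\varepsilon r^{-\alpha}$ must actually be verified to be super/sub-solutions, which is exactly the content of the paper's Steps 3--5.
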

The proof of this theorem will be split into the following propositions. First, recall that the slope of the ``bowl''-type $\sqrt[n]{S_n}$-translator
satisfies the ODE
\begin{align}\label{S_n-rans}
	\begin{cases}
		v'=(1+v^2)\left(\dfrac{r}{v}\right)^{n-1},
		\\
		v(0)=0.
	\end{cases}
\end{align} 	

We start with the case of $n=2$. 
\begin{proposition}\label{Prop S_2}
 For $n=2$ the smooth solution to \eqref{S_n-rans} satisfies   \begin{align*}
    v(r)=e^\frac{r^2}{2}+O\left(\sqrt{e^\frac{r^2}{2}-1}\right),\mbox{ as }r\to\infty. 
 \end{align*}
\end{proposition}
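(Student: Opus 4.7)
The plan is direct: for $n=2$, equation \eqref{S_n-rans} collapses to a \emph{separable} first order ODE, so I would solve it in closed form and then read off the asymptotic expansion. Concretely, with $n=2$ the ODE reads $v' = (1+v^2)\,r/v$ with $v(0)=0$. Since $v'>0$ whenever $r,v>0$, and since the standard near-origin analysis (see the regularity discussion following \eqref{f-trans ODE g}) gives $v(r)\sim r$ as $r\to 0^+$, the solution $v$ is strictly positive on $(0,\infty)$. Hence separation of variables is legitimate, and I would rewrite the ODE as $\frac{v\,dv}{1+v^2} = r\,dr$ and integrate from $0$ to $r$ to obtain
\[
\tfrac12\ln(1+v(r)^2) = \tfrac{r^2}{2}, \qquad \text{i.e.} \qquad v(r) = \sqrt{e^{r^2}-1}.
\]

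From this explicit formula, the claimed asymptotic reduces to an elementary estimate. The cleanest way is to use the factorization
\[
\bigl(v(r)-e^{r^2/2}\bigr)\bigl(v(r)+e^{r^2/2}\bigr) \;=\; v(r)^2 - e^{r^2} \;=\; -1,
\]
which yields $\bigl|v(r)-e^{r^2/2}\bigr| = \bigl(v(r)+e^{r^2/2}\bigr)^{-1} \leq e^{-r^2/2}$. Since $\sqrt{e^{r^2/2}-1}\to\infty$ as $r\to\infty$ (growing like $e^{r^2/4}$), the bound $e^{-r^2/2}$ is far smaller than $\sqrt{e^{r^2/2}-1}$, and in particular $v(r)-e^{r^2/2} = O\bigl(\sqrt{e^{r^2/2}-1}\bigr)$, giving the proposition.

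There is no essential obstacle here: the case $n=2$ is special precisely because the ODE is separable, so the rest is bookkeeping. The generosity of the $O(\cdot)$ term in the statement (which is much weaker than the $O(e^{-r^2/2})$ one actually obtains) presumably anticipates the more delicate $n\geq 3$ cases where the closed form is unavailable and a sub/super-solution analysis analogous to Section \ref{Asym} must be carried out.
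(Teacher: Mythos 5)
Your proposal is correct and takes essentially the same route as the paper: for $n=2$ the equation is separable, and the proposition follows from the explicit solution. Note that your formula $v=\sqrt{e^{r^2}-1}$ is the right one (the paper's displayed $\sqrt{e^{r^2/2}-1}$ is a typo, as one can check against the stated asymptotics $v\sim e^{r^2/2}$), and your factorization argument simply makes explicit the ``bookkeeping'' that the paper leaves to the reader.
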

\begin{proof}
Firstly, for $n=2$, we note that Equation \eqref{S_n-rans} has the form
\begin{align}\label{S_2 ODE}
	v'=(1+v^2)\dfrac{r}{v}. 
\end{align} 
Therefore, $v=\sqrt{e^\frac{r^2}{2}-1}$ for $r>0$. Then, Proposition \ref{Prop S_2} holds by the explicit expression of the solution.
\end{proof}

\begin{proposition}\label{Prop S_3}
For $n=3$ the smooth solution to \eqref{S_n-rans} satisfies   \begin{align*}
    v(r)=\dfrac{r^3}{3}+O(1),\mbox{ as }r\to\infty. 
 \end{align*}  
\end{proposition}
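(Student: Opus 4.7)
The plan is to show that $\phi(r) \coloneqq v(r) - \tfrac{r^3}{3}$ is bounded, by deriving an explicit ODE for $\phi$ from Eq. \eqref{S_n-rans} with $n=3$ and then bootstrapping a rough lower bound for $v$. For $n=3$ the ODE reads $v' = (1+v^2)r^2/v^2$, and a direct computation gives
\begin{equation*}
\phi'(r) = v'(r) - r^2 = \frac{(1+v^2)r^2}{v^2} - r^2 = \frac{r^2}{v(r)^2} > 0 \quad \text{for } r>0.
\end{equation*}

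First I would record the crude lower bound $v(r) \geq \tfrac{r^3}{3}$ for $r \geq 0$. This is immediate from $v'(r) \geq r^2$, integration on $(0,r]$, and the initial condition $v(0)=0$; equivalently, $w_-(r) = \tfrac{r^3}{3}$ is a sub-solution to Eq. \eqref{S_n-rans} in the sense of Section \ref{subsec:ODE}, since $w_-' = r^2 \leq r^2\bigl(1 + 9/r^6\bigr) = (1+w_-^2)r^2/w_-^2$. This already gives $\phi \geq 0$.

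For the matching upper bound, I would feed this crude lower bound back into the formula for $\phi'$. Since $v^2 \geq r^6/9$, one obtains $\phi'(r) \leq 9/r^4$ for all $r>0$, which is integrable at infinity. Fixing any $r_0>0$ and integrating on $[r_0,r]$ yields
\begin{equation*}
\phi(r) \leq \phi(r_0) + \int_{r_0}^\infty \frac{9}{s^4}\,ds = \phi(r_0) + \frac{3}{r_0^3}.
\end{equation*}
Combined with $\phi \geq 0$, this shows $\phi(r) = O(1)$ as $r \to \infty$, which is exactly the asserted expansion $v(r) = \tfrac{r^3}{3} + O(1)$.

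The main subtlety worth flagging is that, unlike the situation in Section \ref{Asym}, the naive candidate $w_+(r) = \tfrac{r^3}{3} + A$ is \emph{not} a super-solution: one has $w_+' = r^2$ while the right-hand side $(1+w_+^2)r^2/w_+^2 = r^2 + r^2/w_+^2$ strictly exceeds $r^2$, so in fact both $\tfrac{r^3}{3}$ and $\tfrac{r^3}{3} + A$ are sub-solutions and the upper bound cannot be extracted by a one-line comparison argument. A genuine super-solution would require a decaying correction, e.g.\ $w_+(r) = \tfrac{r^3}{3} + A - 3/r^3$ with $A$ large, matching the $9/r^4$ leading term of $\phi'$; however, the bootstrapping approach above is more direct and sidesteps the construction altogether.
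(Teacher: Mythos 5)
Your proof is correct, but it takes a genuinely different route from the paper. The paper exploits the fact that for $n\geq 3$ the ODE \eqref{S_n-rans} is separable and integrates it exactly: for $n=3$ one gets the implicit relation $v-\arctan(v)=r^3/3$, and inverting the elementary asymptotics of $x\mapsto x-\arctan(x)$ yields the sharper statement $v(r)=\tfrac{r^3}{3}+\tfrac{\pi}{2}-O(r^{-3})$, of which the proposition's $O(1)$ is a corollary. You instead work directly with $\phi=v-\tfrac{r^3}{3}$, observe $\phi'=r^2/v^2>0$, bootstrap the crude sub-solution bound $v\geq r^3/3$ into $\phi'\leq 9/r^4$, and conclude boundedness by integrability of the tail. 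Your argument is more robust (it never needs a closed-form antiderivative, so it would survive perturbations of the speed), and as a bonus it shows $\phi$ is increasing and bounded, hence convergent; but it does not identify the limiting constant $\pi/2$ or the $O(r^{-3})$ rate that the paper's exact integration delivers for free. Your closing remark that $\tfrac{r^3}{3}+A$ fails to be a super-solution is accurate and is a worthwhile caution, since the rest of Section \ref{Asym} relies on exactly that kind of comparison. The only point worth making explicit is that the integration of $v'\geq r^2$ down to $r=0$ uses that the solution is smooth through the coordinate singularity at the origin with $v(0)=0$ and $v>0$ for $r>0$, which the paper establishes in the preliminaries.
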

\begin{proof}
For $n\geq 3$, Equation \eqref{S_n-rans} can be solved implicitly by
\begin{align}
	\dfrac{v^{n-2}}{n-2}-\int\limits_0^{v}\dfrac{t^{n-3}}{1+t^2}dt=\dfrac{r^n}{n}. 
\end{align}
Particularly, for $n=3$, we have the implicit equation  
\begin{align*}
	v-\arctan(v)=\dfrac{r^3}{3}. 
\end{align*}
On the other hand, the function $f(x)=x-\arctan(x)$ has the following asymptotic expansion
\begin{align*}
	f(x)=x-\dfrac{\pi}{2}+O(x^{-1}), \mbox{ as }x\to\infty.
\end{align*}
Then, an easy calculation reveals that $f^{-1}(x)$ has the following asymptotic given by
\begin{align*}
	f^{-1}(x)=x+\dfrac{\pi}{2}-O(x^{-1}), \mbox{ as }x\to\infty. 
\end{align*} 
Consequently, we obtain that the asymptotic expansion of $v$ is given by
\begin{align*}
	v(r)=\dfrac{r^3}{3}+\dfrac{\pi}{2}-O(r^{-3}), \mbox{ as }r\to\infty. 
\end{align*} 
\end{proof}

\begin{proposition}\label{T S_4}
	For $n>3$, the asymptotic behavior of the solution to \eqref{S_n-rans} is given by
	\begin{align*}
			v(r)=\left(\frac{n-2}{n}\right)^\frac{1}{n-2}r^{\frac{n}{n-2}}+O\left(\dfrac{1}{r^\frac{n}{n-2}}\right),\mbox{ as }r\to\infty.
		\end{align*}
\end{proposition}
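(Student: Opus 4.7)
The plan is to work directly from the implicit formula
\[
\frac{v^{n-2}}{n-2} - \int_0^v \frac{t^{n-3}}{1+t^2}\, dt = \frac{r^n}{n},
\]
which was established in Proposition \ref{Prop S_3} for all $n\ge 3$, and extract the refined asymptotics of $v$ by treating the integral as a lower-order perturbation. Set $A \coloneqq \left(\tfrac{n-2}{n}\right)^{1/(n-2)}$ and $\alpha \coloneqq \tfrac{n}{n-2}$, so that the goal becomes showing $v(r) = A\, r^{\alpha} + O(r^{-\alpha})$.

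The first step is to control $\mathcal{I}(v) \coloneqq \int_0^v \frac{t^{n-3}}{1+t^2}\, dt$ as $v \to \infty$. The algebraic identity $\frac{t^{n-3}}{1+t^2} = t^{n-5} - \frac{t^{n-5}}{1+t^2}$, combined with iteration (or direct polynomial long division in $t^2$), shows that for $n \ge 5$
\[
\mathcal{I}(v) = \frac{v^{n-4}}{n-4} + O(v^{n-6} + 1) \quad \text{as } v \to \infty,
\]
so in particular $\mathcal{I}(v) = O(v^{n-4})$; for $n=4$ one has $\mathcal{I}(v)=\tfrac12\log(1+v^{2})=O(\log v)$. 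Substituting into the implicit formula immediately yields $v^{n-2} = \tfrac{n-2}{n} r^n (1 + o(1))$, and therefore $v \sim A\, r^{\alpha}$.

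Next I would bootstrap. Write $v = A\, r^{\alpha}(1 + \epsilon(r))$ with $\epsilon(r) \to 0$. Using $(1+\epsilon)^{n-2} = 1 + (n-2)\epsilon + O(\epsilon^{2})$ together with the identities $\alpha(n-2) = n$ and $A^{n-2} = \tfrac{n-2}{n}$, the implicit formula collapses, after cancellation of the leading term, to
\[
\frac{n-2}{n}\, r^{n}\, \epsilon\, \bigl(1 + O(\epsilon)\bigr) = \mathcal{I}(v).
\]
Because $v \sim A\, r^{\alpha}$ and $\alpha(n-4) = n - 2\alpha$, the right-hand side is $O(r^{n - 2\alpha})$ for $n \ge 5$, whence $\epsilon = O(r^{-2\alpha})$. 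Consequently
\[
v(r) - A\, r^{\alpha} = A\, r^{\alpha}\, \epsilon = O(r^{-\alpha}) = O\!\left(r^{-n/(n-2)}\right),
\]
as claimed.

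The main technical point is extracting the sharp polynomial bound $\mathcal{I}(v) = O(v^{n-4})$ uniformly across $n \ge 5$, which is what produces the precise $r^{-\alpha}$ error; everything else is a formal expansion. The case $n=4$ is borderline, since the integral there is only $O(\log v)$ and yields $O(r^{-2}\log r)$, so it must either be treated as a small refinement of the stated bound or handled separately. As an alternative to the implicit-function bootstrap, one could instead mimic the sub/super-solution technique of Section \ref{Asym}, using $A r^{\alpha} \pm C r^{-\alpha}$ directly as barriers in the original ODE \eqref{S_n-rans}; I expect both approaches to require essentially the same amount of bookkeeping, and the implicit-formula route is the most economical.
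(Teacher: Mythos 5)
Your argument is correct for $n\ge 5$ and takes a genuinely different route from the paper. The paper works directly on the ODE \eqref{S_n-rans}: it first exhibits $w=\left(\frac{n-2}{n}\right)^{1/(n-2)}r^{n/(n-2)}$ as a sub-solution, upgrades this to $v=w+o\left(r^{n/(n-2)}\right)$ by a contradiction argument, and then introduces $\varphi=v-w$ and runs three further comparison steps to squeeze $\varphi$ between explicit multiples of $r^{-n/(n-2)}$. You instead invert the exact first integral $\int_0^v t^{n-1}(1+t^2)^{-1}\,dt=r^n/n$ (equivalently the displayed implicit formula from Proposition \ref{Prop S_3}), treating $\mathcal{I}(v)$ as a perturbation; the identities $\alpha(n-2)=n$ and $\alpha(n-4)=n-2\alpha$ then give $\epsilon=O(r^{-2\alpha})$ in a single bootstrap. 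This is shorter, avoids the delicate sign bookkeeping of the paper's Steps 3--5, and would even yield the explicit coefficient of the $r^{-\alpha}$ term if you kept $\frac{v^{n-4}}{n-4}$ instead of discarding it. Two minor points: the remainder for $\mathcal{I}$ should read $O\left(v^{n-6}+\log v+1\right)$ so as to cover $n=6$ (this does not affect the bound $\mathcal{I}(v)=O(v^{n-4})$), and before expanding $\mathcal{I}(v)$ you should record that $v\to\infty$, which is immediate from $\frac{v^{n-2}}{n-2}\ge\frac{r^n}{n}$ since $\mathcal{I}\ge0$.

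For $n=4$ your hesitation is warranted, but the situation is sharper than you suggest: the stated error bound is false there, so no amount of extra work will recover it. The first integral gives $v^2-\ln(1+v^2)=\tfrac{r^4}{2}$, hence $v^2=\tfrac{r^4}{2}+4\ln r+O(1)$ and
\[
v(r)=\frac{r^2}{\sqrt2}+\frac{2\sqrt2\,\ln r}{r^2}+O\left(r^{-2}\right),
\]
so the error term is genuinely of order $r^{-2}\log r$ with a nonzero logarithmic coefficient, not $O(r^{-2})$. Your method therefore exposes a defect in the statement (and in Theorem \ref{Asymp S_n slope} and the corresponding theorem of the introduction, which lump $n=4$ into the $n\ge4$ case): the proposition should either exclude $n=4$ or carry the logarithmic correction there. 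Do not present the $n=4$ case as ``a small refinement of the stated bound''; state the corrected asymptotics instead.
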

\begin{proof}
The proof is divided in several steps.
	\begin{step}
	 $v(r)\geq \left(\dfrac{n-2}{n}\right)^{\frac{1}{n-2}}r^\frac{n}{n-2}$ for $r> 0 $. 
	\end{step}	
Firstly, we are going to show that the function $w(r)=\left(\dfrac{n-2}{n}\right)^{\frac{1}{n-2}}r^{\frac{n}{n-2}}$ is a sub-solution to Eq. \eqref{S_n-rans} for $r\geq R(n):=\left(\dfrac{n-2}{2}\right)^{\frac{n-2}{2}} $. Indeed, we note that 
		\begin{align*}
			&w'-(1+w^2)\left(\dfrac{r}{w}\right)^{n-1}
			\\
			&=\left(\dfrac{n}{n-2}\right)^\frac{n-3}{n-2}r^\frac{2}{n-1}-\left(1+\left(\dfrac{n-2}{n}\right)^\frac{2}{n-2}r^\frac{2n}{n-2}\right)\left(\frac{n-2}{n}\right)^{-\frac{(n-1)}{n-2}} r^\frac{-2(n-1)}{n-2}
			\\
			&=-\dfrac{1}{\left(\dfrac{n-2}{n}\right)^\frac{n-1}{n-2}r^\frac{2(n-1)}{n-2}}<0.
		\end{align*}
 Since $w(0)=v(0)=0$, we get $v(r)\geq w(r)$ for all $r\geq R(n)$.
		
\begin{step}
For $n>3$, 	$v(r)=\left(\dfrac{n-2}{n}\right)^{\frac{1}{n-2}}r^\frac{n}{n-2}+o\left(r^\frac{n}{n-2}\right)$ as $r\to\infty$.
\end{step}				
We argue by contradiction which means there exists $\varepsilon>0$ such that for all $r_0\gg 1$ it holds 
		\begin{align*}
			(1+\varepsilon)w(r)\leq v(r),\mbox{ for all }r\geq r_0.  
		\end{align*} 
In particular,
		\begin{align*}
			v'=(1+v^2)\left(\dfrac{r}{v}\right)^{n-1}\leq \dfrac{r^{n-1}w^{-(n-3)}}{(1+\varepsilon)^{n-1}}=\dfrac{1}{(1+\varepsilon)^{n-1}}\left(\dfrac{n-2}{n}\right)^{-\frac{n-3}{n-2}}r^\frac{2}{n-2}.
		\end{align*}
Then, by the previous step, we have 
\begin{align*}
\left(\dfrac{n-2}{n}\right)^\frac{1}{n-2}	r^\frac{n}{n-2}\leq v\leq \dfrac{\left(\frac{n-2}{n}\right)^\frac{1}{n-2}}{(1+\varepsilon)^{n-1}} r^\frac{n}{n-2}.
\end{align*} 	
Note that this is impossible  since $\varepsilon>0$. Consequently, $v(r)=w(r)+o\left(r^{\frac{n}{n-2}}\right)$ for $r\to\infty$. 
\\
	
Next, since $v(r)=\left(\dfrac{n-2}{n}\right)^\frac{1}{n-2}r^\frac{n}{n-2}+o(r^\frac{n}{n-2})$ as $r\to\infty$, we can find a non-negative function $\varphi(r)$ with the following property: for every $C>0$ and all $r\gg 1$ it holds $|\varphi(r)|\leq Cr^\frac{n}{n-2}$ and
	\begin{align*}
		v(r)=\left(\dfrac{n-2}{n}\right)^\frac{1}{n-2}r^\frac{n}{n-2}+\varphi(r), \mbox{ for }r\geq r_0.
	\end{align*}
In addition, $\varphi$ satisfies the following equation
	\begin{align*}
		\varphi'&=v'-\left(\dfrac{n}{n-2}\right)^\frac{n-3}{n-2}r^\frac{2}{n-2}
		\\
		&=\left(1+\left(\left(\frac{n-2}{n}\right)^\frac{1}{n-2}r^\frac{n}{n-2}+\varphi\right)^2\right)\dfrac{r^{n-1}}{\left(\left(\dfrac{n-2}{n}\right)^\frac{1}{n-2}r^\frac{n}{n-2}+\varphi\right)^{n-1}}-\left(\frac{n}{n-2}\right)^\frac{n-3}{n-2}r^\frac{2}{n-2}.
	\end{align*}

\begin{step}
	$\varphi\to 0$ as $r\to\infty$. 
\end{step}
We argue by contradictions, this means there exist $\varepsilon>0$ such that for all $r_0\gg 1$ and every $C>0$ it holds 
\begin{align*}
	\varepsilon\leq \varphi(r)\leq Cr^\frac{n}{n-2} ,\mbox{ for all }r\geq r_0.  
\end{align*} 
In particular, by letting $C\to 0$ in 
\begin{align*}
	\varphi'=&\left(1+\left(\left(\frac{n-2}{n}\right)^\frac{1}{n-2}r^\frac{n}{n-2}+\varphi\right)^2\right)\dfrac{r^{n-1}}{\left(\left(\dfrac{n-2}{n}\right)^\frac{1}{n-2}r^\frac{n}{n-2}+\varphi\right)^{n-1}}-\left(\frac{n}{n-2}\right)^\frac{n-3}{n-2}r^\frac{2}{n-2}
\\
\leq&\left(1+r^\frac{2n}{n-2}\left(\left(\frac{n-2}{n}\right)^\frac{1}{n-2}+C\right)^2\right)\dfrac{r^{n-1}}{\left(\left(\dfrac{n-2}{n}\right)^\frac{1}{n-2}r^\frac{n}{n-2}+\epsilon\right)^{n-1}}-\left(\frac{n}{n-2}\right)^\frac{n-3}{n-2}r^\frac{2}{n-2}
\\
=&\left( \left(\dfrac{n}{n-2}\right)^\frac{n-1}{n-2}\left( \left(\dfrac{n-2}{n}\right)^\frac{1}{n-2}+C \right)^2-\left(\dfrac{n}{n-2}\right)^\frac{n-3}{n-2}\right)r^\frac{2}{n-2}
\\
&-\dfrac{(n-1)\left(\dfrac{n}{n-2}\right)^\frac{n}{n-2}\left( \left(\dfrac{n-2}{n}\right)^\frac{1}{n-2}+C \right)^2\epsilon}{r}+O\left(\dfrac{1}{r^\frac{(n-1)}{(n-2)}}\right),
\end{align*}	
we observe that
\begin{align*}
	\varphi'\leq -(n-1)\left(\dfrac{n}{n-2}\right)\dfrac{\varepsilon}{r}+O\left(\dfrac{1}{r^\frac{(n-1)}{(n-2)}}\right)\leq 0.
\end{align*}
Therefore, $\varphi$ is strictly decreasing but this contradicts that $\varepsilon\leq \varphi$. 

\begin{step}
	$\varphi\geq \left(\dfrac{n}{n-2}\right)^\frac{1}{n-2}\dfrac{1}{r^\frac{n}{n-2}}$ for large enough $r$. 
\end{step}	
 Let $r_1\gg 1$ and $C>0$ such that  
		\begin{align*}
			0\leq \varphi(r)\leq Cr^\frac{n}{n-2}, \mbox{ for all }r\geq r_1. 
		\end{align*}
		Then, we may estimate $\varphi$ by letting $C\to 0$ in the following expression
		\begin{align*}
			\varphi'=&\left(1+\left(\left(\frac{n-2}{n}\right)^\frac{1}{n-2}r^\frac{n}{n-2}+\varphi\right)^2\right)\dfrac{r^{n-1}}{\left(\left(\dfrac{n-2}{n}\right)^\frac{1}{n-2}r^\frac{n}{n-2}+\varphi\right)^{n-1}}-\left(\frac{n}{n-2}\right)^\frac{n-3}{n-2}r^\frac{2}{n-2}
			\\ 
			\geq&\dfrac{1+r^\frac{2n}{n-2}\left(\frac{n-2}{n}\right)^\frac{2}{n-2}}{r^\frac{2(n-1)}{n-2}\left(\left(\frac{n-2}{n}\right)^\frac{1}{n-2}+C\right)^{n-1}}-\left(\frac{n}{n-2}\right)^\frac{n-3}{n-2}r^\frac{2}{n-2}.
			\\
			=&\left(\dfrac{\left(\frac{n-2}{n}\right)^\frac{2}{n-2}}{\left(\left(\frac{n-2}{n}\right)^\frac{1}{n-2}+C\right)^{n-1}}  -\left(\dfrac{n}{n-2}\right)^\frac{n-3}{n-2}\right)r^\frac{2}{n-2}+\dfrac{1}{\left(\left(\frac{n-2}{n}\right)^\frac{1}{n-2}+C\right)^{n-1}r^\frac{2(n-1)}{n-2}}.
		\end{align*}
	Therefore, we have that 
	\begin{align*}
		\varphi'\geq \left(\dfrac{n}{n-2}\right)^\frac{n-1}{n-2}\dfrac{1}{r^\frac{2(n-1)}{n-2}},\mbox{ as }r\to\infty.
	\end{align*}
Then, by integrating form $r$ to $\infty$ we finally obtain 
\begin{align*}
	\varphi\geq \left(\dfrac{n}{n-2}\right)^\frac{1}{n-2}\dfrac{1}{r^\frac{n}{n-2}}.
\end{align*}

\begin{step}
	$\varphi=O\left(\dfrac{1}{r^\frac{n}{n-2}}\right)$.
\end{step}

We note that for $r_0\gg 1$ it holds 
\begin{align*}
	\left(\dfrac{n}{n-2}\right)^\frac{1}{n-2}\dfrac{1}{r^\frac{n}{n-2}}\leq \varphi(r)\leq Cr^\frac{n}{n-2},\mbox{ for all }r\geq r_0.  
\end{align*} 
Next,  by letting $C\to 0$ in 
\begin{align*}
\varphi'=&\left(1+\left(\left(\frac{n-2}{n}\right)^\frac{1}{n-2}r^\frac{n}{n-2}+\varphi\right)^2\right)\dfrac{r^{n-1}}{\left(\left(\dfrac{n-2}{n}\right)^\frac{1}{n-2}r^\frac{n}{n-2}+\varphi\right)^{n-1}}-\left(\frac{n}{n-2}\right)^\frac{n-3}{n-2}r^\frac{2}{n-2}
\\
\leq&\left(1+r^\frac{2n}{n-2}\left(\left(\frac{n-2}{n}\right)^\frac{1}{n-2}+C\right)^2\right)\dfrac{r^\frac{2(n-1)^2}{n-2}}{\left(\left(\dfrac{n-2}{n}\right)^\frac{1}{n-2}r^\frac{2n}{n-2}+\left(\dfrac{n}{n-2}\right)^\frac{1}{n-2}\right)^{n-1}}
\\
&-\left(\frac{n}{n-2}\right)^\frac{n-3}{n-2}r^\frac{2}{n-2},
\end{align*}
we deduce that
\begin{align*}
	 \varphi'\leq -\dfrac{(n-2)\left(\dfrac{n}{n-2}\right)^\frac{n-1}{n-2}}{r^\frac{2(n-1)}{n-2}}+O\left(\dfrac{1}{r^\frac{4(n-1)}{n-2}}\right).
\end{align*} 	
Finally, by integrating over $r$ to $\infty$, we obtain
\begin{align*}
\varphi\leq \dfrac{ (n-2)\left(\dfrac{n}{n-2}\right)^\frac{1}{n-2}}{r^\frac{n}{n-2}} .	
\end{align*}
\end{proof}
This concludes the proof of Proposition \ref{S_n-rans}.

\begin{remark}
    We note that the proof of the uniqueness theorem presented in section \ref{sec: Uniqueness} also applies for entire strictly convex solutions smoothly asymptotic to the ``bowl''-type translators of the $\sqrt[n]{S_n}$-flow. 
    To see this, we choose $R>0$. Then, the entire $\sqrt[n]{S_n}$-translator can be written as a vertical graph $u_\Sigma:\R^n\setminus B_R(0)\to \R$ such that
    \begin{align*}
        u_\Sigma=h_1(r^2)+O\left(h_2(r^2)\right), \mbox{ as }r\to\infty,
    \end{align*}
    where $|x|=r$ and
     \begin{align*}
         h_1(r^2)=\begin{cases}
\int e^\frac{r^2}{2}dr,& \mbox{ for }n=2,
\\
\frac{1}{12}r^4,& \mbox{ for }n=3,
\\
\dfrac{(n-2)^\frac{n-1}{n-2}}{2(n-1)n^\frac{1}{n-2}}r^\frac{2(n-1)}{n-2},& \mbox{ for }n\geq 4.
         \end{cases},\:
        h_2(r^2)=\begin{cases}
          \int\sqrt{e^\frac{r^2}{2}-1}dr,&\mbox{ for }n=2,
          \\
          r,&\mbox{ for }n=3,
          \\
          \frac{1}{r^\frac{2}{n-2}},&\mbox{ for }n\geq 4. 
        \end{cases}
     \end{align*}
Then, Lemma \ref{lem1} holds for $f=\sqrt[n]{S_n}$, since 
     \begin{align*}
         d_xu_\Sigma(e_1)\geq \left(h_1'(|x|^2)-Ch_2(|x|^2)\right)\pI{x,e_1},
     \end{align*}
    and 
    \begin{align}\label{h_1'-h_2'}
    0<h_1'(|x|^2)-Ch_2'(|x|^2)=\begin{cases}
            e^\frac{|x|^2}{2}-C\sqrt{e^\frac{|x|^2}{2}-1},& \mbox{ for}n=2
        \\
        \dfrac{|x|^3}{3}-C,&  \mbox{ for }n=3
        \\
        \dfrac{(n-1)}{(n-2)^\frac{n-3}{n-2}}|x|^{\frac{2}{n-2}}-\dfrac{C}{|x|^\frac{n}{n-2}},&\mbox{ for }n\geq 4.   
    \end{cases}, 
    \end{align}
 for large enough $r\geq R$ and some positive constants $C$. 
\\

In addition, we have that Lemma \ref{lem2} also holds, since by the mean value theorem, it follows that 
    \begin{align*}
   u^*_{\Sigma}(x)-u_\Sigma(x)&=2(t-x_1)\dfrac{\partial u_{\Sigma}}{\partial x_1}(\xi,x_2,\ldots,x_n),
\end{align*}
for some $\xi\in (x_1,2t-x_1)$ with $|x|>r_0$. 
\newline

Then, by writing $(\xi,x')=(\xi,x_2,\ldots,x_n)$,  the asymptotic expression of $u_{\Sigma}$ gives us
\begin{align*}
2(t-x_1)\dfrac{\partial u_{\Sigma}}{\partial x_1}(\xi,x_2,\ldots,x_n)&\geq 4(t-x_1)\xi\left(h_1'(|(\xi,x')|^2)-Ch_2'(|(\xi,x')|^2)\right)
\\
&\geq 4ar_0(h_1(|(\xi,x')|^2)'-h_2'(|(\xi,x')|^2)). 
\end{align*}
Finally, by choosing $r_1\geq r_0$ as in \eqref{h_1'-h_2'}, the difference $h_1(|(\xi,x')|^2)'-h_2'(|(\xi,x')|^2)$ is uniformly bounded from below. Consequently, the proof of Theorem \ref{T3} holds.     
\end{remark}

\section{Wing-like translators}\label{wing-like sec}\,
An exotic translating solution to the mean curvature is the \emph{wing-like} translator discussed in \cite{CSS}. This solution is connected but not graphical, and is rather the union of two graphs each defined on the complement of a ball $\R^n-B_R$. The upper and lower halves are each asymptotic to paraboloids. 
\\

In this section, we will discuss the existence and the asymptotic behavior of \emph{wing-like} translators for \emph{nondegenerate} speeds (see definition \ref{Nondegenerate}). In particular, we will see that the upper and lower branches need not always have the same asymptotics.
\newline

More precisely, a \emph{wing-like} solution is a non-convex rotationally symmetric hypersurface in $\R^{n+1}$ with compact boundary (possible empty) along the $x_{n+1}$-axis which is at distance $R>0$ from the origin that satisfies Eq. \eqref{f-trans}. 
\\
Furthermore, by removing a large enough ball from a \emph{wing-like} solution we have at possible two branches given by vertical graphs:
\begin{itemize}
    \item We are going to show that the upper branch is always asymptotic to the ``bowl’'-type solution.
    \item Meanwhile, the lower branch has shown different behaviors at infinity with respect to the speed function $f$. 
    
    In particular, we will show that for the function $f=\sqrt[k]{S_k}$ with $k$ even the lower branch is asymptotic to symmetric reflection of ``bowl'' type solution, for $k$ odd, the lower posses a boundary component, and for the function $f=Q_{k+1,k}$, the lower branch whose gradient vanishes at infinity. 
\end{itemize}

\begin{remark}
We note that the ODE satisfying a rotationally symmetric $Q_{k+1,k}$-translator has as solution $v=0$, but recall that the horizontal hyperplanes are not vertical translators and that $Q_{k+1,k}$ is not well defined at $0$.
\end{remark}

\begin{remark}
    It is an open problem to characterize the dichotomy of the lower branch of wing-like solutions for any \emph{nondegenerate} speed $f$. 
\end{remark}

Now we will follow the construction given in \cite{CSS}. Firstly, we will start with the a small non-convex portion of the \emph{wing-like} solution. 


\begin{step}\label{small piece}
Construction of a small portion of the \emph{wing-like} $f$-translator as a graph over the $x_{n+1}$-axis.
\end{step}
   At the point where the tangent space is not orthogonal to $e_{n+1}$, the translator can be represented locally as a graph of a function $r:(a,b)\to(0,\infty)$ over the $x_{n+1}$-axis by 
   \begin{align*}
       \bigcup_{x_{n+1}}r(x_{n+1})\cdot\mathbb{S}^{n-1}\times\{x_{n+1}\}. 
   \end{align*}
 Note that $r(x_{n+1})$ represents the radius of the hypersurface given its last coordinate. 

Next, we are going to find the ODE the $r$ satisfies. Firstly, at points where the tangent space is not parallel to $e_{n+1}$, the \emph{wing-like} $f$-translator can be described as a graph of a rotationally symmetric function $u(r)$ over the hyperplane $\set{x_{n+1}=0}$. 
\newline

Note that by construction $u\circ r(x_{n+1})=x_{n+1}$, and by the chain rule, we have  the following equations:
\begin{align*}
   u'(r)=\dfrac{1}{r'}\mbox{ and }u''(r)=-\dfrac{r''}{(r')^3}. 
\end{align*}
Consequently, since a \emph{wing-like} solution is not convex, Eq. \eqref{f-trans} has the form \eqref{wing-like f}. Then, since $u$ satisfies equation \eqref{f-trans ODE g}, we obtain
\begin{align*}
    r''=-(1+(r')^2)g\left(r^{-1},r'\right),
\end{align*}
where $g\left(r^{-1},r'\right)=r'g\left((rr')^{-1},1\right)$ since $g(y,z)$ is $1$-homogeneous. We note that in previous sections we denote $g(y)$ instead of $g(y,1)$ to economize notation.   
\newline

In addition, at points of the \emph{wing-like} $f$-translator where the tangent space is vertical, we may argue in the same way as before by considering the branches separately.
\newline

Finally, we claim that there exists $\varepsilon>0$ and a strictly convex solution to the problem 
\begin{align}\label{r ODE}
\begin{cases}
    r''=-(1+(r')^2)g\left(r^{-1},r'\right),\: x_{n+1}\in (h_0-\varepsilon,h_0+\varepsilon),
    \\
    r(h_0)=R,\:r'(h_0)=0. 
\end{cases},
\end{align}
where $h_0\in\mathbb{R}$ and $R>0$. Note that a different choice of $h_0$ corresponds to a vertical translation of the \emph{wing-like} $f$-translator. Therefore, we will assume that $h_0=0$. 
\newline

Indeed, since Eq. \eqref{r ODE} is not degenerate and the he right-hand side is at least of class $\mathcal{C}^{2}$, the classical theory of ODEs holds. This means that there exist $\varepsilon>0$ and a solution to Eq. \eqref{r ODE} in $(-\varepsilon,\varepsilon)$. 
\newline

Then, by shrinking $\varepsilon>0$ if necessary, we may assume that this solution is strictly convex. This is due to continuity and the fact that
\begin{align*}
    r''(0)=-g\left(R^{-1},0\right)>0.
\end{align*}
To see this, we note that by taking derivatives with respect $z$ in Eq.\eqref{wing-like f}, we have$f_xg_z=1$. Then, since $f$ is strictly increasing, $g_z(y,z)>0$. Therefore, by recalling that $$g(1,1)=0\mbox{ and }g(R^{-1},0)<0$$ the claim holds. Completing the construction of the small portion of the ``wing''-like $f$-translator.   
\begin{remark}\label{k even}
In the particular case of $f=\sqrt[k]{S_k}$, the ODE that $r$ satisfies is 
\begin{align*}
    r''=-(1+(r')^2)\left(\dfrac{(r')^kr^{k-1}}{\binom{n-1}{k-1}}-\dfrac{(n-k)}{kr} \right).
\end{align*}
Therefore, when $k$ is an even number, we have that $\tilde{r}=r(-x_{n+1})$ is also a solution to Eq. \eqref{r ODE}. 
\end{remark}

\begin{proposition}
     The principal curvatures of the upper half small portion of the \emph{wing-like} $f$-translator belong into the cone $$\set{f\mbox{ satisy properties }\ref{a}-\ref{e}\mbox{ and }f(0,1)>0}.$$
\end{proposition}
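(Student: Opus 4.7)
The plan is to compute the principal curvatures of the constructed piece directly, feed in the ODE from Step~\ref{small piece}, and read off that the resulting vector lies in $\overline{\Gamma}$, touching $\partial\Gamma$ only at the neck.

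First, I would parametrize the small piece as $F(t,\omega)=(r(t)\omega,t)$ with $\omega\in\mathbb{S}^{n-1}$ and take as unit normal
\[
    \nu=\frac{1}{\sqrt{1+(r')^2}}(\omega,-r').
\]
On the upper branch, where $r'>0$, this matches the paper's graph convention after inverting $u=r^{-1}$, so the resulting principal curvatures are consistent with those used throughout the paper. A routine computation of the second fundamental form then yields the two distinct eigenvalues
\[
    \lambda_1 = -\frac{r''}{(1+(r')^2)^{3/2}},\qquad \lambda_2 = \cdots = \lambda_n = \frac{1}{r\sqrt{1+(r')^2}},
\]
where the $(n-1)$-fold degeneracy is forced by the rotational symmetry.

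Next, I would substitute the defining ODE $r''=-(1+(r')^2)\,g(1/r,r')$ from~\eqref{r ODE} to rewrite $\lambda_1=g(1/r,r')/\sqrt{1+(r')^2}$, and then use the $1$-homogeneity of $f$ together with the defining identity $f(g(y,z),y)=z$ of $g$ to obtain
\[
    f(\lambda_1,\ldots,\lambda_n)=\frac{1}{\sqrt{1+(r')^2}}\,f\bigl(g(1/r,r'),1/r\bigr)=\frac{r'}{\sqrt{1+(r')^2}}=-\langle\nu,e_{n+1}\rangle.
\]
This both confirms the translator equation and, because $f>0$ precisely on $\Gamma$, lets me locate the curvature vector: for $x_{n+1}\in(0,\varepsilon)$ strict convexity of $r$ with $r'(0)=0$ forces $r'>0$, so $f(\lambda)>0$ and hence $\lambda\in\Gamma$; at the neck $x_{n+1}=0$ we have $r'=0$, so $f(\lambda)=0$ and the $1$-homogeneity of $g$ gives
\[
    (\lambda_1,\lambda_2,\ldots,\lambda_n)\big|_{x_{n+1}=0}=\frac{1}{R}\bigl(g(1,0),1,\ldots,1\bigr),
\]
a point of $\partial\Gamma$ by property~\ref{e}. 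Continuity of $\lambda$ in $x_{n+1}$ then yields $\lambda\in\overline{\Gamma}$ throughout the upper half portion.

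The one delicate ingredient is giving meaning to $g(1/R,0)$ at the neck: the implicit function construction of $g$ outputs a value in $\Gamma$ only for $z$ in the range of $x\mapsto f(x,y)$, so $z=0$ sits on the boundary of its natural domain. I would resolve this by setting $g(y,0)\coloneqq\lim_{z\to 0^+}g(y,z)$, whose existence follows from the continuous extension $\tilde f$ of $f$ to $\overline{\Gamma}$ afforded by property~\ref{e}, combined with the nondegeneracy $f(0,\e)>0$, which places $(0,\e)$ strictly in $\Gamma$ and thereby guarantees that $g$ extends continuously across $z=0$ in a neighborhood of $(1/R,0)$.
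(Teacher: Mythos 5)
Your argument is correct, but it is not the route the paper takes. The paper differentiates the principal curvatures in the height parameter, computes $r'''(0)=g_z(R^{-1},0)\,g(R^{-1},0)$ from the ODE, and deduces $\lambda_1'(0)=-g_zg>0$ while $\lambda_i'(0)=0$; it then concludes that the curvature vector, which sits on $\partial\Gamma$ at the neck, moves into $\Gamma$ as one goes up, since $f$ is increasing in $\lambda_1$. You instead bypass the third-derivative computation entirely by observing that the translator equation itself evaluates $f$ on the curvature vector: $f(\lambda)=\tfrac{1}{\sqrt{1+(r')^2}}f\bigl(g(1/r,r'),1/r\bigr)=\tfrac{r'}{\sqrt{1+(r')^2}}$, which is positive exactly where $r'>0$ (the whole upper half, by strict convexity of $r$ and $r'(0)=0$) and vanishes exactly at the neck, where homogeneity gives $\lambda=\tfrac1R(g(1,0),\e)\in\partial\Gamma$. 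Your version is shorter and makes the dichotomy (interior of $\Gamma$ off the neck, $\partial\Gamma$ at the neck) transparent; the paper's version additionally records the \emph{rate} at which $\lambda_1$ leaves the boundary, which is the quantitative input one wants when invoking the tangency principle of Remark \ref{Tangency Principle}. Two small cautions on your write-up: the phrase ``$f>0$ precisely on $\Gamma$'' is not by itself a licence to conclude $\lambda\in\Gamma$, since $f$ is only defined on $\overline{\Gamma}$ to begin with — the honest justification is the one you in fact give next, namely that $\lambda$ is a positive multiple of $(g(1/r,r'),\tfrac1r\e)$ and the implicit-function construction of $g$ places that point in $\Gamma$ whenever $r'>0$ and on $\partial\Gamma$ (via property \ref{e}) in the limit $r'\to0^+$. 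Your closing remark about extending $g(y,\cdot)$ continuously to $z=0$ is a genuine gap in the paper's own Step \ref{small piece} (which uses $g(R^{-1},0)$ without comment), and your proposed resolution via the continuous extension $\tilde f$ is the right one.
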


\begin{proof}
Firstly, we note that Eq. \eqref{Principal curvatures} implies that 
\begin{align*}
   \lambda_1=\frac{-r''}{(1+r'^2)^{3/2}}\mbox{ and } \lambda_i=\frac{1}{r\sqrt{1+r'^2}}. 
\end{align*}
are the principal curvatures of the small portion of the \emph{wing-like} $f$-translator.
\\

Then, by differentiating them
\begin{align*}
    \lambda_1'=\frac{-r'''}{(1+r'^2)^{3/2}}+\frac{3(r'')^2r'}{(1+r'^2)^{5/2}}  
\mbox{ and }
    \lambda_i'=\frac{-r'}{r^2\sqrt{1+r'^2}}-\frac{r'r''}{r(1+r'^2)^{3/2}}  
\end{align*}

Next, we will be evaluate the principal curvatres at $u=0$\footnote{ $u>0$ is the upper part of the winglike solution and $u<0$ is the lower.}. Differentiating $r''$, we have
\begin{align*}
r'''=-2r'r''g-(1+r'^2)g_y\frac{r'}{r^2}-(1+r'^2)g_zr'',
\end{align*}
where $g,g_y,g_z$ are all evaluated at the point $\left(r^{-1},r'\right)$. Therefore, we obtain
\begin{align*}
r'''(0)=-g_z\left(R^{-1},0\right)r''(0)=g_z\left(R^{-1},0\right)g\left(R^{-1},0\right).
\end{align*}
Finally, by recalling $r(0)=R$, $r'(0)=0$ and $r''(0)=-g(R^{-1},0)<0$, we see that
\begin{align*}
\lambda_1'(0)=-g_z\left(R^{-1},0\right)g\left(R^{-1},0\right)>0,
\mbox{ and }
\lambda_i'(0)=0.
\end{align*}
Therefore, since in the upper half of the small portion $\set{u>0}$, $r$ is strictly decreasing. We obtain that $\lambda_i'>0$ for all $i=1,\ldots,n$ finalizing the proposition.
\end{proof}

\begin{step}
Construction of the upper branch of the \emph{wing-like} $f$-translator. 
\end{step}
 
We have a solution on some interval $u\in [0,\varepsilon)$. To continue the solution, we may revert to the standard representation 
\begin{align*}
    \begin{cases}
        v'=(1+v^2)g\left(\dfrac{v}{r}\right)\\
        v(r_0)=v_0
    \end{cases}
\end{align*}
where we may choose $r_0\in(R,R+\varepsilon)$ arbitrarily. Then, since $v\to\infty
$ as $r\to R^+$, we may assume $\dfrac{v_0}{r_0} \geq 1$, by choosing $r_0$ sufficiently close to $R.$ By standard ODE theory we have existence and uniqueness as long as the right hand side is well-defined.
\newline

If $\dfrac{v_0}{r_0}>1$, the right hand side of the ODE is initially negative, hence the function decreases until $g\left(\dfrac{v}{r}\right)=0$ (i.e.: $\dfrac{v}{r}=1$). So we only need to discuss the case $\dfrac{v_0}{r_0}=1$. If this is the case, then the function $v_+=r$ is a supersolution to the ODE, and $v_-=\dfrac{r}{f(1,1)}$ is a subsolution, as can be seen in \cite{rengaswami2021rotationally} Section 7.1. The proof that $\dfrac{v}{r}\to 1$ is just a special case of Proposition 2 of \cite{rengaswami2021rotationally}.
\\

On the other hand, for the lower branch of the \emph{wing-like} $f$-translator, we have a solution on some interval $(-\varepsilon,0]$. As with the upper branch, we may revert to the standard representation 
\begin{align*}
    \begin{cases}
        v'=(1+v^2)g\left(\dfrac{v}{r}\right),\\
        v(r_0)=v_0.
    \end{cases},
\end{align*}
where $r_0\in(R,R+\varepsilon)$. Then, Since $v\to-\infty$ as $r\to R^+$, we may assume $\dfrac{v_0}{r_0} \leq -1$, by choosing $r_0$ sufficiently close to $R.$ By standard ODE theory we have existence and uniqueness as long as the right hand side is well-defined.
\newline

\begin{step}
  Construction and asymptotic behavior of the lower branch of the \emph{wing-like} $\sqrt[k]{S_k}$-translator for $k$ even.
\end{step}

Firstly, by construction the principal curvatures of the small portion of $W_R$ are given by
\begin{align*}
   \lambda_1=\dfrac{g(r^{-1},r')}{\sqrt{1+(r')^2}}=\dfrac{\dfrac{(r')^kr^{k-1}}{\binom{n-1}{k-1}}-\dfrac{(n-k)}{kr}}{\sqrt{1+(r')^2}}\mbox{ and }\lambda_i=\dfrac{1}{r\sqrt{1+(r')^2}}. 
\end{align*}
Then, since $\lambda\in\Gamma_k$ if, and only if, 
\begin{align*}
    \binom{n-1}{l}\lambda_i^l+\binom{n-1}{l-1}\lambda_1\lambda_i^{l-1}>0,\mbox{ for }l=1,\ldots, k,
\end{align*}
or equivalently, we have
\begin{align*}
    0<\dfrac{\binom{n-1}{l-1}}{r^l(1+(r')^2)^{\frac{l}{2}}}\left(rg(r^{-1},r')+\dfrac{n-l}{l}\right)\Leftrightarrow 0<\dfrac{(r'r)^k}{\binom{n-1}{k-1}}+\dfrac{n(k-l)}{kl}.
\end{align*}
Consequently, since $r'$ only vanish at the origin, it follows that the principal curvatures of the small portion satisfy $\lambda(0)\in\partial\Gamma_k\cap\Gamma_{k-1}$, or equivalently $$\lambda(0)\in\partial\Gamma_k\cap\set{\dfrac{\partial \sqrt[k]{S_k}}{\partial\lambda_i}>0},$$ and $\lambda(p)\in\Gamma_k$ for $p\in W_R\setminus\set{0}$. 
\newline

Next, the ODE for $f=\sqrt[k]{S_k}$ is given by
\begin{align}\label{S_k-trans R>0}
\begin{cases}
 v'=(1+v^2)\left(\dfrac{1}{\binom{n-1}{k-1}}\left(\dfrac{r}{v}\right)^{k-1}-\dfrac{(n-k)v}{kr}\right),\: r\geq r_0,
 \\
 v(r_0)=v_0.
\end{cases}
\end{align}
We note that when $k$ is an even number, then the function $-v$, where $v$ denotes the upper half solution, is also a solution to \eqref{S_k-trans R>0}\footnote{Recall Remark \ref{k even}  and the construction of the \emph{wing-like} solution is by extending the small piece $r$ as the initial data of Eq. \eqref{S_k-trans R>0} in terms of $u$ with $v'=u$.}. 
\newline
In particular, the right hand side of Eq. \eqref{S_k-trans R>0} is always negative and finite. This means the solution exists for all $r\geq r_0$, and the asymptotic expression of $v$ for $k$ even is given by 
\begin{align*}
    v(r)=-\dfrac{r}{\sqrt[k]{\binom{n-1}{k}}}+\dfrac{\binom{n-1}{k}\binom{n-1}{k-1}}{k\sqrt[k]{\binom{n-1}{k}}}\dfrac{1}{r}+O(|x|^{-2}),\mbox{ as }r\to\infty. 
\end{align*}
Consequently, the principal curvatures of $W_R^{-}$ do not belong $\Gamma_k$ for any $k$ as $r\to\infty$.

\begin{figure}
        \centering
        \begin{subfigure}{0.32\textwidth}
                \includegraphics[scale=0.4]{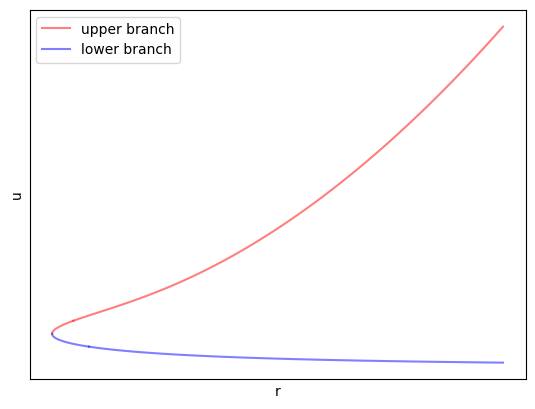}
            \caption{$Q_{k+1,k}$}
        \end{subfigure}
        \begin{subfigure}{0.32\textwidth}
                \includegraphics[scale=0.4]{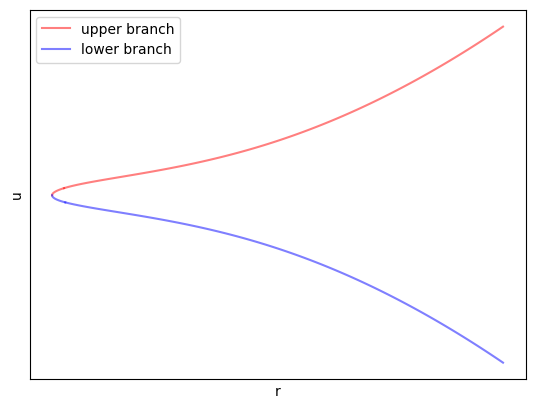}
            \caption{$\sqrt[k]{S_k}$ for $k$ even}
        \end{subfigure}   
        \begin{subfigure}{0.32\textwidth}
                 \includegraphics[scale=0.4]{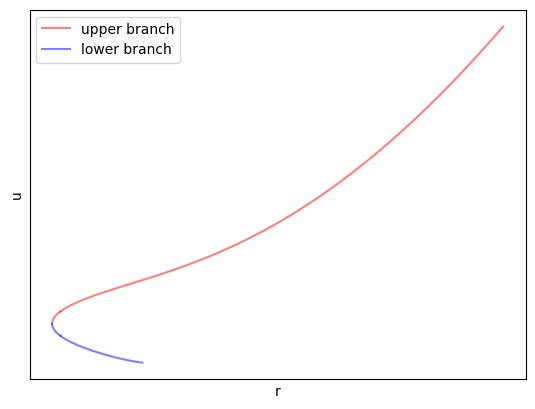}
            \caption{$\sqrt[k]{S_k}$ for $k$ odd}
        \end{subfigure}
        \caption{Profile curves for winglike translators for various speed functions}

\end{figure}

\begin{remark}
It is important to note that when $k$ is odd, the right hand side of Eq. \eqref{S_k-trans R>0} has a singularity when $v=0$. This is because the translator equation  \ref{f-trans ODE f=1} can never be satisfied by any axially symmetric graph that possesses a point where $u'=0$ and $r>0$. Thus, if $u' \to 0$ at some point $r_1>r_0$, $u'$ can never be extended past $r_1$, i.e. such graph will possess a boundary $S^{n-1}$.
\end{remark}

\begin{step}
  Construction and asymptotic behavior of the lower branch of the \emph{wing-like} $Q_{k+1,k}$-translator.
\end{step}
Firstly, the ODE that $r$ satisfies is 
\begin{align*}
    r''=-(1+(r')^2)\dfrac{(n-k)}{(k+1)r}\left(\dfrac{(k+1)r'r-(n-k-1)}{(n-k)-krr'}\right).
\end{align*}
Therefore, the principal curvatures of the small portion of $W_R$ are given by
\begin{align*}
   \lambda_1=\dfrac{g(r^{-1},r')}{\sqrt{1+(r')^2}}=\dfrac{(n-k)}{(k+1)}\dfrac{\dfrac{(k+1)r'r-(n-k-1)}{(n-k)-krr'}}{r\sqrt{1+(r')^2}}\mbox{ and }\lambda_i=\dfrac{1}{r\sqrt{1+(r')^2}}. 
\end{align*}
Then, we will have that $\lambda\in\Gamma_{k+1}$ if, and only if,
\begin{align*}
   0<\dfrac{(n-k)}{k+1}\dfrac{(k+1)rr'-(n-k-1)}{(n-k)-krr'}+\dfrac{n-l}{l},\: l=1,\ldots, k+1.
\end{align*}
Consequently, since  
\begin{align*}
0<\dfrac{n-l}{l}-\dfrac{n-k-1}{k+1}\Leftrightarrow 0<k+1-l,    
\end{align*}
we obtain 
that $\lambda(0)\in\partial\Gamma_{k+1}\cap\Gamma_k$, or equivalently $$\lambda(0)\in\partial\Gamma_{k+1}\cap\set{\dfrac{\partial Q_{k+1,k}}{\partial\lambda_i}>0},$$ and $\lambda(p)\in\Gamma_k$ for $p\in W_R\setminus\set{0}$. 
\newline

Next, the ODE for $f=Q_{k+1,k}$ is given by
\begin{align}\label{Q_k+1,k-trans R>0}
\begin{cases}
 v'=\dfrac{n-k}{k+1}(1+v^2)\dfrac{v}{r}\dfrac{(k+1)-(n-k-1)\dfrac{v}{r}}{(n-k)\dfrac{v}{r}-k},\: r\geq r_0,
 \\
 v(r_0)=v_0.
\end{cases},\end{align}
Firstly, since $v \to -\infty$ as $r\to R^+$, we may take $r_0$ to be sufficiently close to $R$ so that $v_0<0$. Note that the right hand side of Eq. \eqref{Q_k+1,k-trans R>0} is positive when $v<0$. Thus the solution is increasing as long as this is the case. On the other hand, $v_+=0$ is a solution to the Eq. \eqref{Q_k+1,k-trans R>0}, therefore $v$ and $v_+$ cannot coincide which means that $v$ remains negative for all $r\geq r_0$.
\newline

Now we show that $v \to 0$ as $r \to \infty$. Firstly, since $v$ is increasing and bounded above by $0$, $\displaystyle L \coloneqq\lim_{r\to \infty}v$ exists in $[v_0,0]$. If $L \neq 0$, then there exists $\varepsilon>0$ such that 
\begin{align*}
   \dfrac{v}{r}<\dfrac{-\varepsilon}{r}.
\end{align*}
Then, since $k<n-1$ and the function $g(y)=\dfrac{n-k}{k+1}y\dfrac{(k+1)-(n-k-1)y}{(n-k)y-k}$ is decreasing, we have
\begin{align*}
 v'\geq (1+v^2)g\left(\dfrac{-\varepsilon}{r}\right)\geq (1+v^2)\dfrac{(n-k)\varepsilon}{kr}.
\end{align*}
Therefore, $\tan\left(\dfrac{(n-k)\varepsilon}{k}\ln(r)\right)=O(v)$, but this contradicts that $v$ is bounded. 
\newline

\begin{remark}
   We remark that the asymptotic behavior of the lower branch of the $Q_{k+1,k}$-translator cannot decay to $0$ faster than $\dfrac{-1}{r^{\frac{n-k}{k}(1+\varepsilon)}}$ for every $\varepsilon>0$ and $1\leq k$. 
   \\
   To see this, let $C,\varepsilon>0$ and consider $w_{C,\varepsilon}(r)=-Cr^{-(1+\varepsilon)a}$ where $a=\dfrac{n-k}{k}$.
   Then, $w_{C,\varepsilon}$ is a super-solution to Eq. \eqref{Q_k+1,k-trans R>0} with $v(r_0)<0$. In fact,
\begin{align*}
    w'(r)=\dfrac{(1+\varepsilon)aC}{r^{(1+\varepsilon)a+1}}
\end{align*}
and the RHS of Eq. \eqref{Q_k+1,k-trans R>0} is 
\begin{align*}
    &aC(1+w^2)\dfrac{1}{r^{(1+\varepsilon)a+1}}\dfrac{(k+1)r^{(1+\varepsilon)a+1}+(n-k-1)}{(n-k)+kr^{(1+\varepsilon)a+1}}
    \\
    &\leq aC(1+w^2)\dfrac{1}{r^{(1+\varepsilon)a+1}}
    \\
    &\leq w',
\end{align*}
for $r>r_0$, where $r_0$ is sufficiently large. Then, for each $\varepsilon>0$, we may fix $r_1>r_0$ and then choose C small enough so that $v(r_1)<w_{C,\varepsilon}(r_1)$. Consequently, we have that $w_{C,\varepsilon}>v$ for all $r\geq r_1$.
\end{remark}

\begin{remark}
 By the above remark, we have that the asymptotic behavior of the principal curvatures of the lower branch at infinity behave as $O(-r^{-a-1})$, where $a=\dfrac{n-k}{k}$. Then, it follows that $\lambda\in\Gamma_{k+1}$ if, and only if, 
 \begin{align*}
0<\binom{n-1}{l}\dfrac{(-1)^{l-1}}{r^{l(a+1)}}\dfrac{n(k-l)}{kl}\mbox{, holds for }l=1,\ldots,k+1. 
 \end{align*}
 Therefore, $\lambda\not\in\Gamma_{k+1}$ for any $k$ as $r\to\infty$ of the lower branch. 
\end{remark}

\begin{step}
Finally, the construction of the \emph{wing-like} solution.
\end{step}
 This will be done by extending the small piece $r:(-\varepsilon,\varepsilon)\to\R$ in the two branches by plugging the initial conditions 
\begin{align*}
\begin{cases}
u'\left(r(-\varepsilon)\right)=\dfrac{1}{r'(-\varepsilon)}   
\\
u(r(-\varepsilon))=-\varepsilon.
\end{cases}
\begin{cases}u'\left(r(\varepsilon)\right)=\dfrac{1}{r'(\varepsilon)}   
\\
u(r(\varepsilon))=\varepsilon.
\end{cases}
\end{align*}
in Equations \eqref{S_k-trans R>0} and \eqref{Q_k+1,k-trans R>0}, respectively.

\section{Application: Growth estimate}\label{sec:Application}
In this section we will prove the theorem \ref{T4}, which we repeat here for the convenience of the reader.
\begin{theorem}
    Let $\Sigma=\set{(x,u(x)):x\in\mathbb{R}^n}$ be an entire convex $f$-translator for some nondegenerate speed $f$. Assume further, that there exist  $a,b,C_1,C_2,R>0$ such that 
    \begin{align*}
    C_1|x|^a\leq u(c)\leq C_2|x|^b,\mbox{ for }|x|\geq R,
    \end{align*}
    then, $a\leq 2\leq b$. 

    In addition, if $a=b=2$, then $u(x)$ agrees with the ``bowl''-type solution up to vertical translations. 
\end{theorem}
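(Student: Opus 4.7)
The strategy is to compare $u$ with vertical translates $B+s$ of the bowl-type soliton $B$, whose asymptotics are given by Theorem \ref{T1}, and to apply the tangency principle (Theorem \ref{T2}) at any interior touching point. Write $c_0 := \frac{1}{2f(0,\e)}$ for the leading coefficient of $B$ at infinity.

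To prove $a\leq 2$, we argue by contradiction: if $a>2$, then $u(x) - B(x) \geq C_1|x|^a - c_0|x|^2 - O(\ln|x|)\to +\infty$ as $|x|\to\infty$, while $u-B$ is continuous and bounded on every compact set. Hence $s^* := \inf_{\R^n}(u-B)$ is finite and attained at some interior point $p$. At $p$ the graphs of $u$ and $B+s^*$ touch, their tangent spaces coincide (since $p$ is a critical point of $u-B-s^*$), and $u\geq B+s^*$ globally. As $B+s^*$ is strictly convex and $u$ is convex, Theorem \ref{T2}(1) forces $u\equiv B+s^*$, contradicting $u(x)\geq C_1|x|^a$ with $a>2$. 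The inequality $b\geq 2$ follows by the symmetric argument applied to $B-u$: if $b<2$ then $B-u\to +\infty$, its infimum is attained at an interior point, and Theorem \ref{T2}(1) forces $u\equiv B-s^{**}$ for some $s^{**}$, contradicting $u\leq C_2|x|^b$.

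For the uniqueness statement with $a=b=2$, the hypothesis reads $C_1|x|^2\leq u(x)\leq C_2|x|^2$ for $|x|\geq R$. The sandwich argument above still applies when $C_1>c_0$ or $C_2<c_0$, forcing $u$ to be a bowl translate but simultaneously contradicting the strict inequality on the leading coefficient; hence necessarily $C_1\leq c_0\leq C_2$. In this regime $u-B$ need not have a definite sign at infinity, so we plan to first show that the leading quadratic term of $u$ is exactly $c_0|x|^2$ by studying the blow-downs $u_\lambda(x):=\lambda^{-2}u(\lambda x)$. These are convex with uniform quadratic bounds, so by an Arzel\`a--Ascoli type argument a subsequence converges locally uniformly to a convex degree-$2$ homogeneous limit $u_\infty$. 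Using that each $u_\lambda$ satisfies a rescaled $f$-translator equation and that $f$ is nondegenerate, we expect to show $u_\infty(x)=c_0|x|^2$, so $u(x)=c_0|x|^2+o(|x|^2)$. A bootstrap in the spirit of Section \ref{Asym} should then upgrade this to a smooth asymptotic expansion of the form \eqref{smoothly asym. to the bowl} (after a vertical translation), placing $u$ in the setting of Theorem \ref{T3}, which finally identifies $u$ with a vertical translate of $B$.

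The main obstacle lies in this middle case $C_1\leq c_0\leq C_2$: the blow-down analysis must extract enough regularity from convexity alone to pass to the limit, identify the limit equation, and exploit nondegeneracy of $f$ to exclude non-rotationally symmetric limiting quadratics; and the subsequent bootstrap requires replicating the multi-step sub/super-solution arguments of Section \ref{Asym} in a non-radial setting. An alternative route, worth pursuing in parallel, is to adapt the moving plane argument of Section \ref{sec: Uniqueness} under the weaker hypothesis of two-sided quadratic bounds, thereby obtaining rotational symmetry of $u$ directly and reducing the problem to the ODE uniqueness already handled in Section \ref{Asym}.
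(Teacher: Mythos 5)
Your argument for $a\leq 2\leq b$ is correct and is essentially the paper's proof: the paper slides a vertical translate of the bowl $P$ up from strictly below $\Sigma$ (possible because $a>2$ forces $u-P\to+\infty$, so $u-P$ is bounded below and its infimum is attained at an interior point) and applies the interior tangency principle, Theorem \ref{T2}(1), at the first touching point to conclude $\Sigma=P$, a contradiction; the case $b<2$ is handled symmetrically. Your reformulation via $s^*=\inf(u-B)$ is the same argument in different clothing, and your identification of $B+s^*$ as the strictly convex $\Sigma_1$ and $\Sigma$ as the convex $\Sigma_2$ in Theorem \ref{T2} is the correct way to invoke it.

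The case $a=b=2$ is where your proposal has a genuine gap, and you say so yourself: the blow-down argument (convergence of $u_\lambda(x)=\lambda^{-2}u(\lambda x)$ to $c_0|x|^2$), the exclusion of non-rotationally-symmetric quadratic limits, and the bootstrap from $u=c_0|x|^2+o(|x|^2)$ to the full expansion \eqref{smoothly asym. to the bowl} are all announced as plans ("we expect to show", "should then upgrade") rather than carried out. Without them you have not verified the hypothesis of Theorem \ref{T3} — namely that $\Sigma$ is \emph{smoothly asymptotic} to the bowl — which is strictly stronger than the two-sided bound $C_1|x|^2\leq u\leq C_2|x|^2$. Your preliminary observation that the sliding argument forces $C_1\leq c_0\leq C_2$ is correct but does not resolve the middle regime, where $u-B$ may change sign at infinity and no barrier is available. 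For what it is worth, the paper disposes of this case in a single sentence, asserting that $a=b=2$ implies $\Sigma$ is smoothly asymptotic to $P$ and then citing Theorem \ref{T3}; it supplies no more justification than you do, so you have correctly located the point at which the argument is thin, but your proposal does not close it either. To complete the proof along the paper's lines you would need either the blow-down-plus-bootstrap program you sketch, or your alternative of running the moving-plane argument of Section \ref{sec: Uniqueness} directly under the two-sided quadratic bound — in particular re-proving the analogue of Lemma \ref{lem2}, whose proof currently uses the precise expansion \eqref{smoothly asym. to the bowl} and not merely growth bounds.
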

\begin{proof}
Let assume first that $a>2$, and let $P$ be the ``bowl''-type $f$-translator in $\mathbb{R}^{n+1}$. Recall that $P$ is an entire strictly convex rotationally symmetric graph smoothly asymptotic to
\begin{align*}
 \dfrac{|x|^2}{f(0,1)}-\restr{\dfrac{\partial f}{\partial\lambda_1}}{\lambda=(0,1)}\ln(|x|)+O(|x|^{-1}). 
\end{align*}

Next, by translating suitably $P$ over $\Sigma$, we can find a $t_0>0$ such that $P-te_{n+1}$ lies strictly below from $\Sigma$ for $t\geq t_0$. Note that this is possible since $a>2$. Now, we may translate $P-te_{n+1}$ upward  until touches $\Sigma$ for the first time. Finally, by the interior tangential principle Theorem \ref{T2}, we obtain $\Sigma=P$, but this contradicts $a>2$.

The case $b<2$ is analogous, the only change is to place the ``bowl''-type soliton above $\Sigma$ and move it down until it touches $\Sigma$.

Finally, when $a=b=2$, then $\Sigma$ is smoothly asymptotic to $P$, and by Theorem \ref{T3}, $\Sigma=P$ up to a vertical translation.   
\end{proof}

%
%
\begin{remark}
    Theorem \ref{T4} also holds when the hypotheses of $f$ are changed by the one given in Remark \ref{Tangency Principle}. This means also holds for entire $f$-translators such that the principal curvatures of the graph belongs to $\Gamma$, and $\Gamma$ is a convex cone of $\set{f>0}$ that contains the point $(1,\e)$. 
\end{remark}

\printbibliography

@article{fontenele2001tangency,
  title={A tangency principle and applications},
  author={Fontenele, Francisco and Silva, S\'ergio L},
  journal={Illinois Journal of Mathematics},
  volume={45},
  number={1},
  pages={213--228},
  year={2001},
  publisher={Duke University Press}
}

@article {urbas,
    AUTHOR = {Urbas, John},
     TITLE = {Complete noncompact self-similar solutions of {G}auss
              curvature flows. {I}. {P}ositive powers},
   JOURNAL = {Math. Ann.},
  FJOURNAL = {Mathematische Annalen},
    VOLUME = {311},
      YEAR = {1998},
    NUMBER = {2},
     PAGES = {251--274},
      ISSN = {0025-5831},
   MRCLASS = {53C45 (35J60 53C21)},
  MRNUMBER = {1625754},
MRREVIEWER = {Naoyuki Ishimura},
       DOI = {10.1007/s002080050187},
       URL = {https://doi-org.utk.idm.oclc.org/10.1007/s002080050187},
}

@misc{cogo2023rotational,
      title={Rotational symmetry of ancient solutions to fully nonlinear curvature flows}, 
      author={A. Cogo and S. Lynch and O. Vičánek Martínez},
      year={2023},
      eprint={2310.08301},
      archivePrefix={arXiv},
      primaryClass={math.DG}
}

@article {CSS,
    AUTHOR = {Clutterbuck, Julie and Schn\"{u}rer, Oliver C. and Schulze, Felix},
     TITLE = {Stability of translating solutions to mean curvature flow},
   JOURNAL = {Calc. Var. Partial Differential Equations},
  FJOURNAL = {Calculus of Variations and Partial Differential Equations},
    VOLUME = {29},
      YEAR = {2007},
    NUMBER = {3},
     PAGES = {281--293},
      ISSN = {0944-2669},
   MRCLASS = {35K55 (35B35 53C44)},
  MRNUMBER = {2321890},
MRREVIEWER = {Xi Ping Zhu},
       DOI = {10.1007/s00526-006-0033-1},
       URL = {https://doi-org.utk.idm.oclc.org/10.1007/s00526-006-0033-1},
}

@article {MHS,
    AUTHOR = {Mart\'in, Francisco and Savas-Halilaj, Andreas and Smoczyk,
              Knut},
     TITLE = {On the topology of translating solitons of the mean curvature
              flow},
   JOURNAL = {Calc. Var. Partial Differential Equations},
  FJOURNAL = {Calculus of Variations and Partial Differential Equations},
    VOLUME = {54},
      YEAR = {2015},
    NUMBER = {3},
     PAGES = {2853--2882},
      ISSN = {0944-2669},
   MRCLASS = {53C44 (53C21 53C42)},
  MRNUMBER = {3412395},
MRREVIEWER = {Xin Zhou},
       DOI = {10.1007/s00526-015-0886-2},
       URL = {https://doi-org.utk.idm.oclc.org/10.1007/s00526-015-0886-2},
}

@book{teschl2012ordinary,
  title={Ordinary differential equations and dynamical systems},
  author={Teschl, Gerald},
  volume={140},
  year={2012},
  publisher={American Mathematical Soc.}
}

@inproceedings{hoffman2021notes,
  title={Notes on translating solitons for mean curvature flow},
  author={Hoffman, David and Ilmanen, Tom and Mart{\'\i}n, Francisco and White, Brian},
  booktitle={Minimal Surfaces: Integrable Systems and Visualisation: m: iv Workshops, 2016--19},
  pages={147--168},
  year={2021},
  organization={Springer}
}

@article{jtsmaximumprinciple,
title={Maximum Principles and Consequences for $\gamma$-translators in $\mathbb{R}^{n+1}$},
author={Torres-Santaella, Jos\'e},
journal={arxiv preprint arXiv:2306.03649},
year={2023}
}

@article{rengaswami2021rotationally,
  title={Rotationally symmetric translating solutions to extrinsic geometric flows},
  author={Rengaswami, Sathyanarayanan},
  journal={arXiv preprint arXiv:2109.10456},
  year={2021}
}

@article{martinez2022equilibrium,
  title={Equilibrium of surfaces in a vertical force field},
  author={Mart\'inez, Antonio and Mart\'inez-Trivi{\~n}o, AL},
  journal={Mediterranean Journal of Mathematics},
  volume={19},
  number={1},
  pages={3},
  year={2022},
  publisher={Springer}
}

@article{schoen1983uniqueness,
  title={Uniqueness, symmetry, and embeddedness of minimal surfaces},
  author={Schoen, Richard M},
  journal={Journal of Differential Geometry},
  volume={18},
  number={4},
  pages={791--809},
  year={1983},
  publisher={Lehigh University}
}

\end{document}